\spnewtheorem*{sketch alternative}{Outline of an alternative proof}{\it}{}
\spnewtheorem*{alternative}{Alternative proof}{\it}{}
\spnewtheorem*{sketch}{Outline of the proof}{\it}{}
\spnewtheorem*{hint}{Hint}{\it}{}
\spnewtheorem*{notations}{Notation}{\bf}{}
\spnewtheorem{open problem}[theorem]{Open problem}{\bf}{}
\newcommand{\RR}{\mathbb{R}}
\newcommand{\pa}{\partial}
\newcommand{\ve}{\varepsilon}
\newcommand{\ol}{\overline}
\newcommand{\la}{\lambda}
\newcommand{\de}{\delta}    
\newcommand{\fhi}{\varphi} 
\newcommand{\ga}{\gamma}
\newcommand{\dv}{\mathop{\mathrm{div}}}
\newcommand{\al}{\alpha}
\newcommand{\be}{\beta}
\newcommand{\Om}{\Omega}
\newcommand{\na}{\nabla}
\newcommand{\nr}{\Vert}
\newcommand{\De}{\Delta}
\newcommand{\si}{\sigma}
\newcommand{\te}{\theta}
\newcommand{\Ga}{\Gamma}
\newcommand{\cSC}{{\mathcal C}_S}
\newcommand{\cH}{{\mathcal H}}
\newcommand{\DLB}{\De_{LB} \, }
\begin{document}

\title*{Stable solutions to some elliptic problems: minimal cones, the Allen-Cahn equation, and blow-up solutions}
\titlerunning{Stable solutions to some elliptic problems}
\author{Xavier Cabr\'e and Giorgio Poggesi}
\institute{Xavier Cabr\'e\textsuperscript{1,2} \at \textsuperscript{1} Universitat Polit\`ecnica de Catalunya, Departament de Matem\`{a}tiques, Diagonal 647, 
08028 Barcelona, Spain
\at \textsuperscript{2} ICREA, Pg. Lluis Companys 23, 08010 Barcelona, Spain 
\\
\email{xavier.cabre@upc.edu}
\and
Giorgio Poggesi \at Dipartimento di Matematica ed Informatica ``U.~Dini'',
Universit\` a di Firenze, viale Morgagni 67/A, 50134 Firenze, Italy
\\
\email{giorgio.poggesi@unifi.it}
}
%
%


\maketitle

\abstract*{These notes record the lectures for the CIME Summer Course taught by the first author in Cetraro during the week of June 19-23, 2017.
The notes contain the proofs of several results on the classification of stable solutions to some nonlinear elliptic equations. The results are crucial steps within the regularity theory of minimizers to such problems. We focus our attention on three different equations, emphasizing that the techniques and ideas in the three settings are quite similar.
\newline\indent
The first topic is the stability of minimal cones. We prove the minimality of the Simons cone in high dimensions, and we give almost all details in the proof of J.~Simons on the flatness of stable minimal cones in low dimensions.  
\newline\indent
Its semilinear analogue is a conjecture on the Allen-Cahn equation posed by E.~De Giorgi in 1978. This is our second problem, for which we discuss some results, as well as an open problem in high dimensions on the saddle-shaped solution vanishing on the Simons cone.
\newline\indent
The third problem was raised by H.~Brezis around 1996 and concerns the boundedness of stable solutions to reaction-diffusion equations in bounded domains. We present proofs on their regularity in low dimensions and discuss the main open problem in this topic. 
\newline\indent
Moreover, we briefly comment on related results for harmonic maps, free boundary problems, and nonlocal minimal surfaces.}

\abstract{These notes record the lectures for the CIME Summer Course taught by the first author in Cetraro during the week of June 19-23, 2017.
The notes contain the proofs of several results on the classification of stable solutions to some nonlinear elliptic equations. The results are crucial steps within the regularity theory of minimizers to such problems. We focus our attention on three different equations, emphasizing that the techniques and ideas in the three settings are quite similar.
\newline\indent
The first topic is the stability of minimal cones. We prove the minimality of the Simons cone in high dimensions, and we give almost all details in the proof of J.~Simons on the flatness of stable minimal cones in low dimensions.  
\newline\indent
Its semilinear analogue is a conjecture on the Allen-Cahn equation posed by E.~De Giorgi in 1978. This is our second problem, for which we discuss some results, as well as an open problem in high dimensions on the saddle-shaped solution vanishing on the Simons cone.
\newline\indent
The third problem was raised by H.~Brezis around 1996 and concerns the boundedness of stable solutions to reaction-diffusion equations in bounded domains. We present proofs on their regularity in low dimensions and discuss the main open problem in this topic. 
\newline\indent
Moreover, we briefly comment on related results for harmonic maps, free boundary problems, and nonlocal minimal surfaces.}



\newpage

\setcounter{minitocdepth}{2}
\dominitoc

The abstract and table of contents above give an account of the topics treated in these lecture notes.

\section{Minimal cones}
\label{sec:mincones}
In this section we discuss two classical results on the theory of minimal surfaces: Simons flatness result on stable minimal cones in low dimensions and the Bombieri-De Giorgi-Giusti counterexample in high dimensions.
The main purpose of these lecture notes is to present the main ideas and computations leading to these deep results -- and to related ones in subsequent sections. Therefore, to save time for this purpose, we do not consider the most general classes of sets or functions (defined through weak notions), but instead we assume them to be regular enough.

Throughout the notes, for certain results we will refer to three other expositions: the books of Giusti \cite{G}
and of Colding and Minicozzi \cite{CM}, and the CIME lecture notes of Cozzi and Figalli \cite{CF}.
The notes \cite{CabCapThree} by the first author and Capella have a similar spirit to the current ones and may complement them.

\begin{definition}[Perimeter]
Let $E \subset \RR^n$ be an open set, regular enough. For a given open ball $B_R$ we define the {\it perimeter of} $E$ {\it in} $B_R$ as 
\begin{equation*}
P(E, B_R) := H_{n-1} (\pa E \cap B_R ),
\end{equation*}
where $H_{n-1}$ denotes the $(n-1)$-dimensional Hausdorff measure (see Figure~\ref{fig:1}).
\end{definition}

The interested reader can learn from \cite{G, CF} a more general notion of perimeter (defined by duality or in a weak sense) and the concept of set of finite perimeter.

\begin{definition}[Minimal set]\label{def:minimal set}
We say that an open set (regular enough) $E \subset \RR^n$ is a {\it minimal set} (or a {\it set of minimal perimeter}) if and only if, for every given open ball $B_R$, it holds that
$$P(E, B_R)\leq P(F, B_R)$$
for every open set $F\subset\RR^n$ (regular enough) such that $E \setminus B_R = F\setminus B_R$.
\end{definition}
In other words, $E$ has least perimeter in $B_R$ among all (regular) sets which agree with $E$ outside $B_R$.

\begin{figure}[htbp]
\centering
\includegraphics[scale=.25]{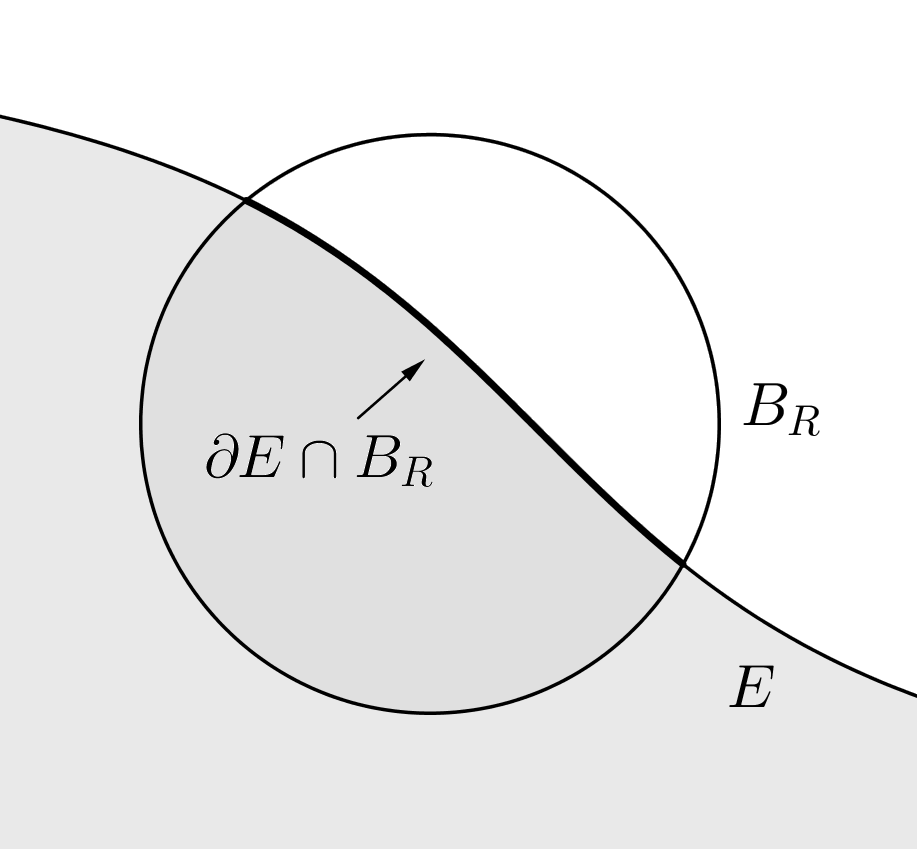}
%
%
\caption{The perimeter of $E$ in $B_R$}
\label{fig:1}       
\end{figure}

To proceed, one considers small perturbations of a given set $E$
and computes the {\it first and second variations of the perimeter functional}. 
To this end,
let $\{\phi_t\}$ be a one-parameter family of maps $\phi_t:\RR^n \to \RR^n$
such that $\phi_0$ is the identity $I$ and all the maps $\phi_t-I$ have compact support (uniformly) contained in $B_R$.

Consider the sets $E_t=\phi_t(E)$. We are interested in the perimeter functional $P(E_t, B_R)$.
One proceeds by choosing $\phi_t=I+t\xi\nu$, which shifts the original set $E$ in the normal direction $\nu$ to its boundary. Here $\nu$ is the outer normal to $E$ and is extended in a neighborhood of $\pa E$ to agree with the gradient of the signed distance function to $\pa E$, as in \cite{G} or in our Subsection \ref{subsec:SimLem} below. On the other hand, $\xi$ is a scalar function with compact support in $B_R$ (see Figure \ref{fig:2}).

\begin{figure}[htbp]
\centering
\includegraphics[scale=.25]{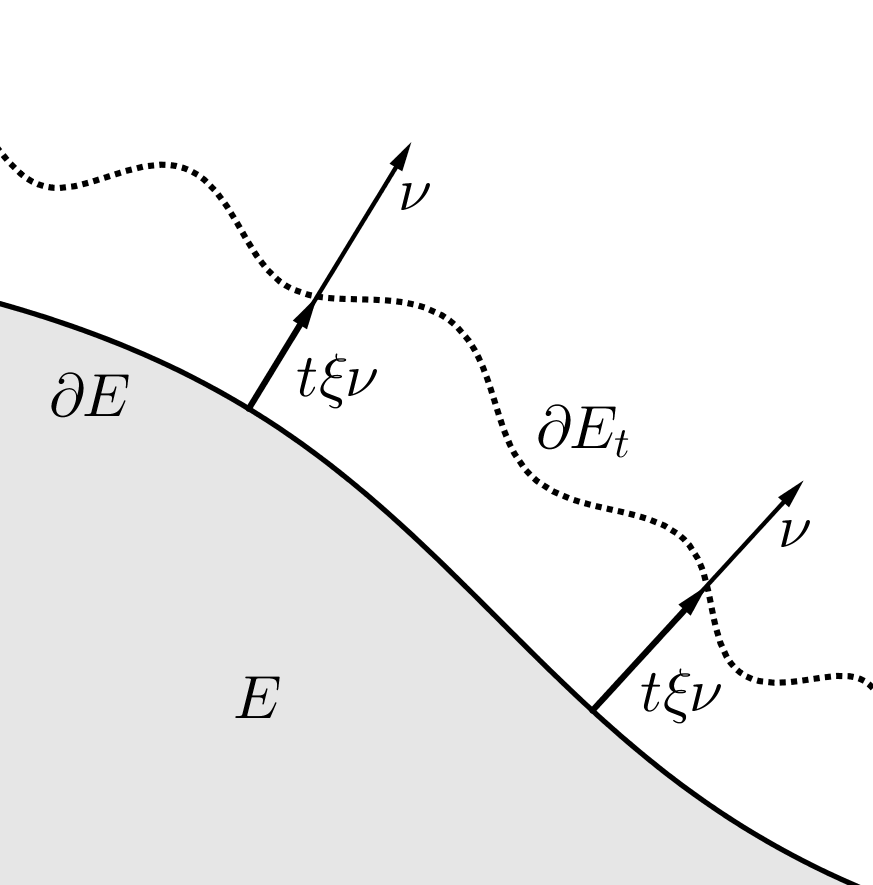}
\caption{A normal deformation $E_t$ of $E$}
\label{fig:2}       
\end{figure}

It can be proved (see chapter 10 of \cite{G}) that the first and second variations of perimeter are given by
\begin{eqnarray}
\left.\frac{d}{dt} P(E_t, B_R) \right|_{t=0} 
&=&\int_{\partial E}{\mathcal H}\xi dH_{n-1},
\label{eq:1-1v}\\
\left.\frac{d^2}{dt^2}  P(E_t, B_R) \right|_{t=0}
&=&\int_{\partial E}\left\{|\delta\xi|^2-(c^2-{\mathcal H}^2)\xi^2
\right\}dH_{n-1},
\label{eq:1-1vBIS}
\end{eqnarray}
where ${\mathcal H}={\mathcal H}(x)$ is the {\it mean curvature} of $\partial E$ at $x$ and $c^2=c^2(x)$ is the sum of the squares 
of the $n-1$ principal
curvatures $k_1, \dots, k_{n-1}$ of $\pa E$ at $x$. More precisely,
$$
\cH(x)= k_1 + \dots + k_{n-1} \quad \mbox{ and } \quad c^2= k_1^2 + \dots + k_{n-1}^2 .
$$
In \eqref{eq:1-1vBIS}, $\delta$ (sometimes denoted by $\na_T$) is the tangential gradient to the surface
$\pa E$, given by
\begin{equation}\label{def:tangentialgradient}
\delta \xi = \na_T \xi = \na \xi - (\na \xi \cdot \nu) \nu
\end{equation}
for any function $\xi$ defined in a neighborhood of $\pa E$. Here $\na$ is the usual Euclidean gradient and $\nu$ is always the normal vector to $\pa E$.
Being $\de$ the tangential gradient, one can check that $\de \xi_{| \pa E}$ depends only on $\xi_{| \pa E}$. It can be therefore computed for functions $\xi: \pa E \to \RR$ defined only on $\pa E$ (and not necessarily in a neighborhood of $\pa E$).


\begin{definition}
\begin{enumerate}[(i)]
\item We say that $\pa E$ is a {\it minimal surface} (or a {\it stationary surface}) if the first variation of perimeter vanishes for all balls $B_R$. Equivalently, by \eqref{eq:1-1v}, $\cH = 0$ on $\pa E$.
\item We say that $\pa E$ is a {\it stable minimal surface} if $\cH =0$ and the second variation of perimeter is nonnegative for all balls $B_R$.
\item We say that $\pa E$ is a {\it minimizing minimal surface} if $E$ is a minimal set as in Definition~\ref{def:minimal set}.
\end{enumerate}
\end{definition}
We warm the reader that in some books or articles ``minimal surface'' may mean ``minimizing minimal surface''.

\begin{remark}
\begin{enumerate}[(i)]
\item If $\pa E$ is a minimal surface (i.e., $\cH = 0$), the second variation
of perimeter \eqref{eq:1-1vBIS} becomes
\begin{equation}
\left.\frac{d^2}{dt^2}  P(E_t, B_R) \right|_{t=0} = \int_{\partial E}\left\{|\delta\xi|^2-c^2\xi^2\right\}dH_{n-1} .
\label{eq:minimal-1-2v2}
\end{equation}
\item If $\pa E$ is a minimizing minimal surface, then $\pa E$ is a stable minimal surface. In fact, in this case the function $P(E_t, B_R)$
has a global minimum at $t=0$.
\end{enumerate}
\end{remark}

\subsection{The Simons cone. Minimality}\label{subsec 1.1:MinimalitySimcones}

\begin{definition}[The Simons cone]
The Simons cone $\cSC \subset \RR^{2m}$ is the set
\begin{equation}\label{def:simonscone}
\cSC =\{ x\in\mathbb{R}^{2m}\, : \, x_1^2+\ldots +x_m^2 = x_{m+1}^2
+\ldots +x_{2m}^2 \} .
\end{equation}
In what follows we will also use the following notation:
\begin{equation*}
\cSC =\{ x=(x',x'') \in \RR^m \times \RR^m \, : |x'|^2=|x''|^2 \}.
\end{equation*}
Let us consider the open set
\begin{equation*}
E_S= \left\{x \in \RR^{2m} :  u(x):= |x'|^2 - |x''|^2 < 0 \right\},
\end{equation*}
and notice that $\pa E_S = \cSC$ (see Figure \ref{fig:3}).
\end{definition}

\begin{figure}[htbp]
\centering
\includegraphics[scale=.25]{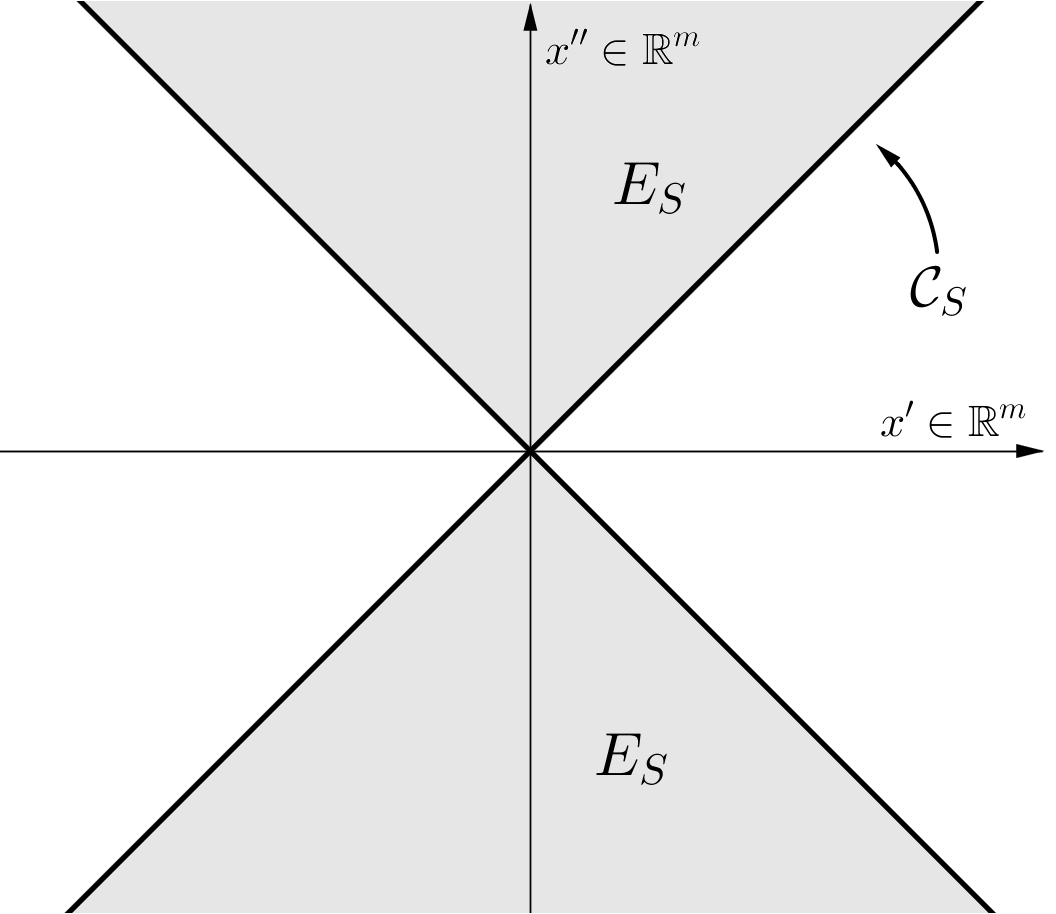}
\caption{The set $E_S$ and the Simons cone $\cSC$}
\label{fig:3}       
\end{figure}

\begin{exercise}
Prove that the Simons cone has zero mean curvature for every integer $m \ge 1$. For this, use the following fact (that you may also try to prove): if
$$
E=\left\{x \in \RR^n :  u(x) < 0 \right\}
$$
for some function $u: \RR^n \to \RR$, then the mean curvature of $\pa E$ is given by
\begin{equation}
\label{mean curvature formula}
{\mathcal H} = \left. \dv \left( \frac{\na u}{| \na u|} \right) \right|_{\pa E}.
\end{equation}
\end{exercise}

\begin{remark}\label{remark:nominimindim2}
It is easy to check that, in $\RR^2$, $\cSC$ is not a minimizing minimal surface. In fact, referring to Figure \ref{fig:4}, the shortest way to go from $P_1$ to $P_2$ is through the straight line. Thus, if we consider as a competitor in $B_R$ the interior of the set
$$F:= \ol{E}_S \cup \ol{T}_1 \cup \ol{T}_2,$$
where $T_1$ is the triangle with vertices $O$, $P_1$, $P_2$, and $T_2$ is the symmetric of $T_1$ with respect to $O$, we have that $F$ has less perimeter in $B_R$ than $E_S$.

\begin{figure}[htbp]
\centering
\includegraphics[scale=.25]{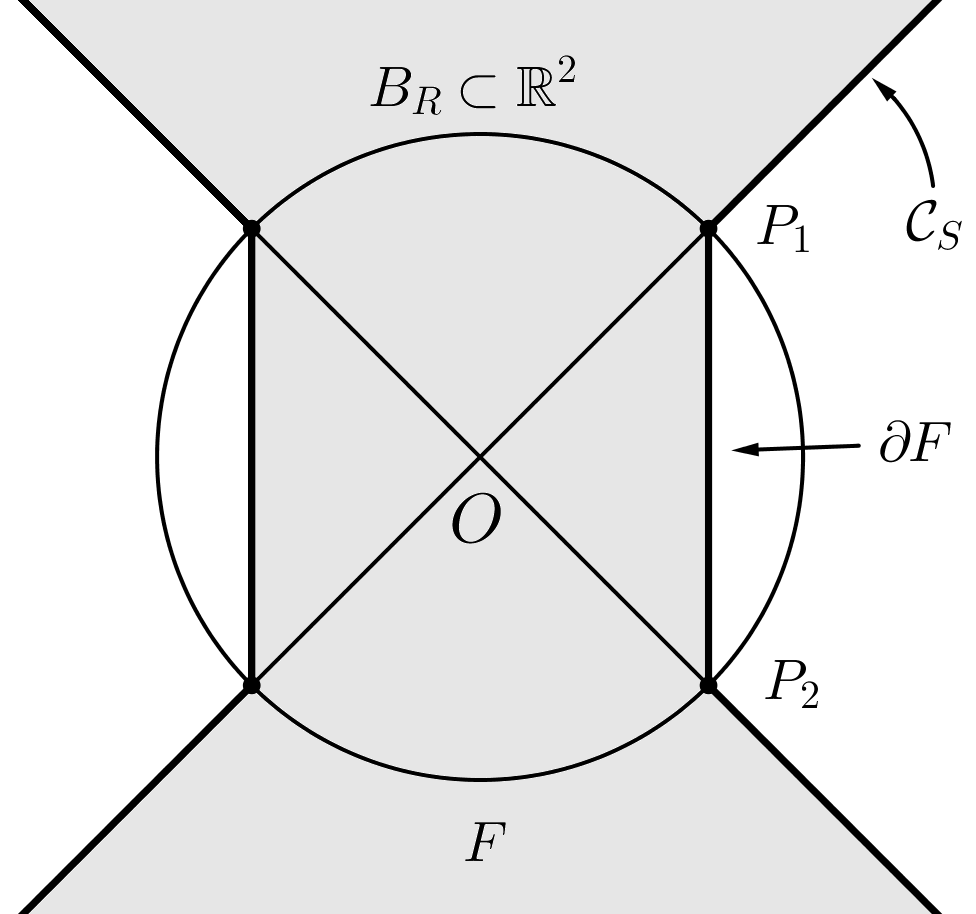}
\caption{The Simons cone $\cSC$ is not a minimizer in $\RR^2$}
\label{fig:4}       
\end{figure}

\end{remark}

In 1969 Bombieri, De Giorgi, and Giusti proved the following result.

\begin{theorem}[Bombieri-De Giorgi-Giusti~\cite{BdGG}]
\label{thm:SimCone} 
If $2m\geq 8$, then $E_S$ is a minimal set in $\mathbb{R}^{2m}$. That is, if  $2m\geq 8$, the Simons cone $\cSC$ is a minimizing minimal surface.
\end{theorem}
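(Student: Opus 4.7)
The plan is to prove minimality via a \emph{calibration argument}, exhibiting a unit vector field $X$ on $\RR^{2m}$ that is divergence-free and extends the outward normal $\nu$ to $\cSC$ on $\pa E_S$. If such an $X$ exists, then for any competitor $F$ with $F \setminus B_R = E_S \setminus B_R$, applying the divergence theorem to $X$ on the (sufficiently regular representative of the) symmetric difference $E_S \triangle F$ inside $B_R$, and using $|X|\le 1$ together with $X\cdot \nu=1$ on $\pa E_S$, yields
\begin{equation*}
P(E_S,B_R) \;=\; \int_{\pa E_S \cap B_R} X\cdot \nu \, dH_{2m-1} \;\le\; \int_{\pa F \cap B_R} X\cdot \nu_F \, dH_{2m-1} \;\le\; P(F,B_R),
\end{equation*}
which is the desired minimality.

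To construct $X$, I would build a \emph{foliation} of each of the two connected components of $\RR^{2m}\setminus \cSC$ by smooth minimal hypersurfaces asymptotic to the cone and then set $X(x)$ to be the unit normal at $x$ to the unique leaf through $x$, with orientations glued consistently across $\cSC$. Since each leaf has ${\mathcal H}=0$, the standard formula \eqref{mean curvature formula} with $u$ a defining function whose level sets are the leaves gives $\dv\bigl(\na u/|\na u|\bigr)=0$ along every leaf, and hence $\dv X = 0$ throughout $\RR^{2m}\setminus \cSC$; the extension across $\cSC$ as a distribution then presents no difficulty because $X$ is bounded. Exploiting the $O(m)\times O(m)$ symmetry, I reduce the construction of leaves to the first quadrant of the $(s,t)$-plane, with $s=|x'|$ and $t=|x''|$: a hypersurface generated by a curve $\gamma$ in this quadrant is minimal in $\RR^{2m}$ if and only if $\gamma$ satisfies a certain second-order ODE (the trace of the mean-curvature operator under the symmetry reduction). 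The cone corresponds to the diagonal $t=s$, and I seek a one-parameter family of solution curves $\{\gamma_\la\}_{\la\ne 0}$ contained strictly on one side of the diagonal (with $\gamma_\la$ lying in $\{t>s\}$ for $\la>0$ and $\{t<s\}$ for $\la<0$), asymptotic to it at infinity, and sweeping out the full open half-quadrant as $\la$ ranges over $(0,+\infty)$ or $(-\infty,0)$.

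The main obstacle, and the place where the dimensional hypothesis $2m\ge 8$ is crucial, is precisely this existence-and-non-crossing property of the leaves. One studies the ODE via a shooting/phase-plane analysis: for each initial datum one obtains a candidate minimal surface, and the question is whether its generating curve remains on one side of the diagonal (giving an honest leaf asymptotic to the cone) or crosses the diagonal (intersecting $\cSC$ and destroying the foliation). The linearization of the ODE at the diagonal corresponds exactly to the second-variation operator whose spectral properties Simons analyzed: for $2m\ge 8$ the indicial roots are real, the trajectories approach the diagonal monotonically rather than oscillating, and a careful barrier/monotonicity argument on the ODE shows that each $\gamma_\la$ stays strictly on its side and that the $\gamma_\la$'s do not intersect one another, so that they provide a genuine smooth foliation. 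In dimensions $2m\le 6$ the indicial roots are complex, trajectories spiral into the diagonal, leaves cross the cone and each other, and the construction fails, consistently with Remark~\ref{remark:nominimindim2}.

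Once the foliation is in place, the calibration inequality above closes the argument with no further ingredient beyond the divergence theorem applied to $X$, which is Lipschitz away from $\cSC$ and bounded across it, with vanishing distributional divergence because each leaf is minimal and the normals match at $\cSC$ by construction.
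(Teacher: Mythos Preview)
Your approach is correct and is essentially the original strategy of Bombieri, De Giorgi, and Giusti~\cite{BdGG} (see Remark~\ref{rem:Foliation B-DG-G}): construct a foliation of $\RR^{2m}\setminus\cSC$ by exact minimal hypersurfaces via the ODE~\eqref{equa:odest} in the $(s,t)$-plane, and use the resulting unit normal field as a divergence-free calibration. The paper, however, presents a much shorter argument due to De Philippis and Paolini~\cite{DP}: one takes the explicit function $\tilde u = |x'|^4 - |x''|^4$ and sets $X = \na\tilde u/|\na\tilde u|$. This $X$ is \emph{not} divergence-free, but a direct computation (Exercise~\ref{ex:utilde}) shows that for $m\ge 4$ its divergence has the same sign as $\tilde u$ --- nonnegative outside $E_S$ and nonpositive inside --- which is all the calibration inequality requires. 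The gain is substantial: the entire ODE analysis (existence, non-crossing, asymptotics of the leaves), which you rightly identify as the main obstacle and which is the delicate core of~\cite{BdGG}, is replaced by a one-line verification. The paper's alternative proof also uses the level sets of $\tilde u$, now as a foliation by sub- and supersolutions combined with the strong maximum principle, again bypassing the construction of exact minimal leaves. Your route is more conceptual and historically prior, but the paper's is significantly more elementary; note also that you have left the ``careful barrier/monotonicity argument on the ODE'' entirely unspecified, and that is precisely where the real work in your outline lies.
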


The following is a clever proof of Theorem \ref{thm:SimCone} found in 2009 by G.~De Philippis and E.~Paolini (\cite{DP}). It is based on a {\it calibration argument}.
Let us first define
\begin{equation}\label{def:utilde1}
\tilde{u} = |x'|^4 - |x''|^4  ;
\end{equation}
clearly we have that
\begin{equation*}
E_S= \left\{x \in \RR^{2m} :  \tilde{u}(x) < 0 \right\} \, \mbox{ and } \, \pa E_S = \cSC.
\end{equation*}
Let us also consider the vector field
\begin{equation}\label{def:Xvectorfield}
X= \frac{\na \tilde{u}}{ |\na \tilde{u}| } .
\end{equation}

\begin{exercise}\label{ex:utilde}
Check that if $m \ge 4$, $\dv X$ has the same sign as $\tilde{u}$ in $\RR^{2m}$.
\end{exercise}

\begin{proof}[of Theorem  \ref{thm:SimCone}]
By Exercise \ref{ex:utilde} we know that if $m \ge 4$, $\dv X$ has the same sign as $\tilde{u}$, where $\tilde{u}$ and $X$ are defined in \eqref{def:utilde1} and \eqref{def:Xvectorfield}.
Let $F$ be a competitor for $E_S$ in a ball $B_R$, with $F$ regular enough. We have that
$F \setminus B_R = E_S \setminus B_R$.


Set $\Om := F \setminus E_S $ (see Figure \ref{fig:5}). By using the fact that $\dv X \geq 0$ in $\Om$ and the divergence theorem, we deduce that
\begin{equation}
\label{calideph}
0 \leq \int_{\Om} \dv X \, dx= \int_{\partial E_S \cap \ol{\Om} } X \cdot \nu_\Om \, dH_{n-1} + \int_{\partial F \cap \ol{\Om}} X \cdot \nu_\Om \, dH_{n-1}.
\end{equation}

\begin{figure}[htbp]
\centering
\includegraphics[scale=.25]{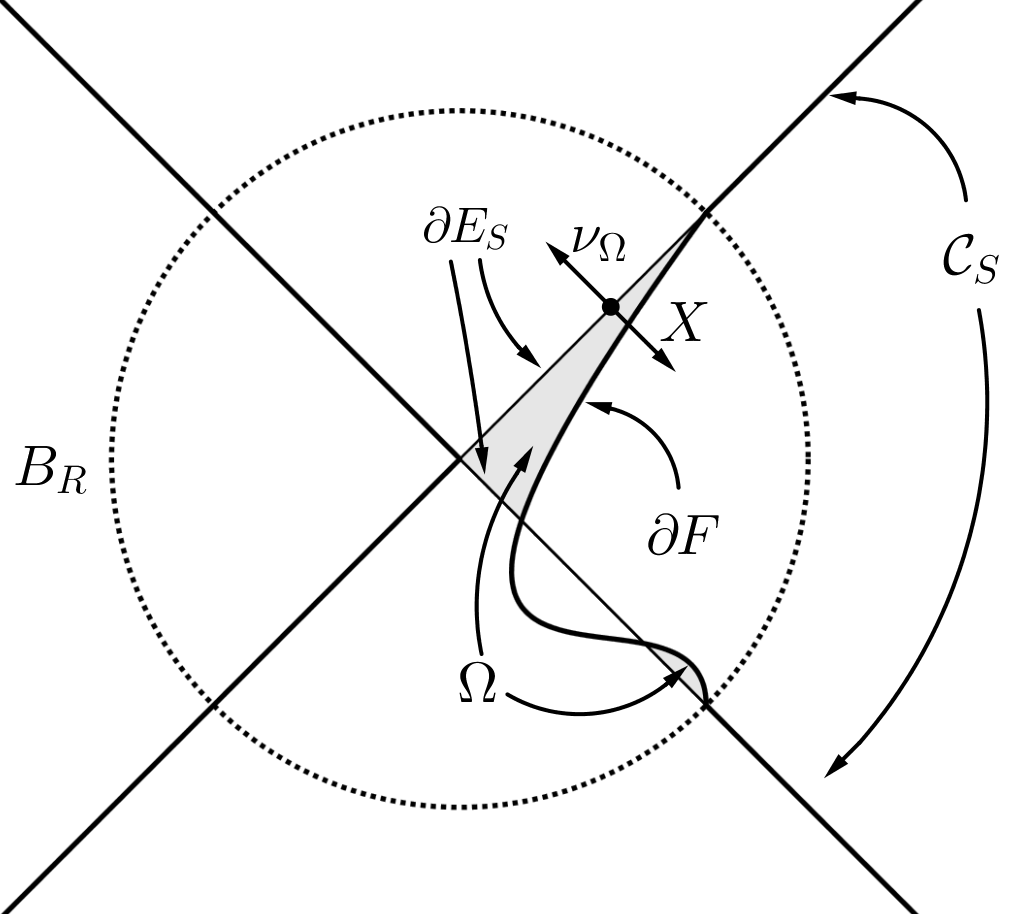}
\caption{A calibration proving that the Simons cone $\cSC$ is minimizing}
\label{fig:5}       
\end{figure}

Since $X= \nu_{E_S}= - \nu_\Om$ on $\partial E_S \cap \ol{\Om}$, and $|X| \leq 1$ (since in fact $|X|= 1$) everywhere (and hence in particular on $\partial F \cap \ol{\Om}$), from \eqref{calideph} we conclude 
\begin{equation}\label{eq:dimcalibration1}
H_{n-1} (\pa E_s \cap \ol{\Om}) \le H_{n-1} (\pa F \cap \ol{\Om}) .
\end{equation}

With the same reasoning it is easy to prove that \eqref{eq:dimcalibration1} holds also for
$\Om := E_S \setminus F$.
Putting both inequalities together, we conclude that $P(E_S,B_R) \leq P(F, B_R)$.

Notice that the proof works for competitors $F$ which are regular enough (since we applied the divergence theorem). However, it can be generalized to very general competitors by using the generalized definition of perimeter, as in \cite[Theorem~1.5]{DP}.
\qed
\end{proof}

Theorem \ref{thm:SimCone} can also be proved with another argument -- but still very much related to the previous one and that also uses the function $\tilde{u} = |x'|^4 - |x''|^4 $. It consists of going to one more dimension $\RR^{2m+1}$ and working with the minimal surface equation for graphs, \eqref{equa:Hgraph} below. This is done in Theorem 16.4 of \cite{G} (see also the proof of Theorem 2.2 in \cite{CabCapThree}).

In the proof above we used a vector field $X$ satisfying the following three properties (with $E= E_S$):
\begin{enumerate}[(i)]
\item $\dv X \geq 0$ in $B_R \setminus E$ and $\dv X \leq 0$ in $E \cap B_R$;
\item $X= \nu_{E}$ on $\partial E \cap B_R$;
\item $|X| \leq 1$ in $B_R$.
\end{enumerate}

\begin{definition}[Calibration]\label{def:calibrationprima}
If $X$ satisfies the three properties above we say that $X$  is a \textit{calibration} for $E$ in $B_R$.
\end{definition}

\begin{exercise}
Use a similar argument to that of our last proof and build a calibration to show that a hyperplane in $\RR^n$ is a minimizing minimal surface.
\end{exercise}

In an appendix, and with the purpose that the reader gets acquainted with another calibration, we present one which solves the isoperimetric problem: balls minimize perimeter among sets of given volume in $\RR^n$.
Note that the first variation (or Euler-Lagrange equation) for this problem is, by Lagrange multipliers,
$\cH = c$, where $c \in \RR$ is a constant.

The following is an alternative proof of Theorem \ref{thm:SimCone}. It uses a {\it foliation argument}, as explained below.
This second proof is probably more transparent (or intuitive) than the previous one and it is used often in minimal surfaces theory, but requires to know the existence of a (regular enough) minimizer (something that was not necessary in the previous proof). This existence result is available and can be proved with tools of the Calculus of Variations (see \cite{CF,G}).

The proof also requires the use of the following important fact. If $\Sigma_1$, $\Sigma_2 \subset B_R$ are two connected hypersurfaces (regular enough), both satisfying $\cH=0$, and such that $\Sigma_1 \cap \Sigma_2 \neq \varnothing$ and $\Sigma_1$ lies on one side of $\Sigma_2$, then $\Sigma_1 \equiv \Sigma_2$ in $B_R$. Lying on one side can be defined as $\Sigma_1 = \pa F_1$, $\Sigma_2 = \pa F_2$, and $F_1 \subset F_2$. The same result holds if $F_1$ satisfies $\cH=0$ and $F_2$ satisfies $\cH \geq 0$.

This result can be proved writing both surfaces as graphs in a neighborhood of a common point $P \in \Sigma_1 \cap \Sigma_2$. The minimal surface equation $\cH=0$ then becomes
\begin{equation}\label{equa:Hgraph}
\dv \left( \frac{\na \fhi_1}{ \sqrt{1+| \na \fhi_1|^2 }  } \right) =0
\end{equation}
for $\fhi_1: \Om \subset \RR^{n-1} \to \RR$ such that $\left( y', \fhi_1 (y') \right) \subset \Om \times \RR $ is a piece of $\Sigma_1$ (after a rotation and translation). Then, assuming that $\fhi_2$ also satisfies \eqref{equa:Hgraph} -- or the appropriate inequality --, one can see that $\fhi_1 - \fhi_2$ is a (super)solution of a second order linear elliptic equation. Since $\fhi_1 - \fhi_2 \geq 0$ (due to the ordering of $\Sigma_1$ and $\Sigma_2$), the strong maximum principle leads to $\fhi_1 - \fhi_2 \equiv 0$ (since $(\fhi_1 -\fhi_2)(0)=0$ at the touching point). See Section 7 of Chapter 1 of \cite{CM} for more details.

\begin{alternative}[of Theorem \ref{thm:SimCone}]
Note that the hypersurfaces
$$\left\{ x \in \RR^{2m} : \tilde{u} (x) = \la \right\} , $$
with $\la \in \RR$, form a foliation of $\RR^{2m}$, where $\tilde{u}$ is the function defined in \eqref{def:utilde1}.

Let $F$ be a minimizer of the perimeter in $B_R$ among sets that coincide with $E_S$ on $\pa B_R$, and assume that it is regular enough. Since $F$ is a minimizer, in particular $\pa F$ is a solution of the minimal surface equation $\cH=0$.
Since $2m \geq 8$, by \eqref{mean curvature formula} and Exercise \ref{ex:utilde}, the leaves of our foliation $\left\{ x \in \RR^{2m} : \tilde{u} (x) = \la \right\}$ are subsolutions of the same equation for $\la>0$, and supersolutions for $\la<0$ .

If $F \not\equiv E_S$, there will be a first leaf (starting either from $\la= + \infty$ or from $\la = - \infty$) $\left\{ x \in \RR^{2m} : \tilde{u} (x) = \la_* \right\}$, with $\la_* \neq 0$, that touches $\pa F$ at a point in $\ol{B}_R$ that we call $P$ (see Figure \ref{fig:6}).

\begin{figure}[htbp]
\centering
\includegraphics[scale=.25]{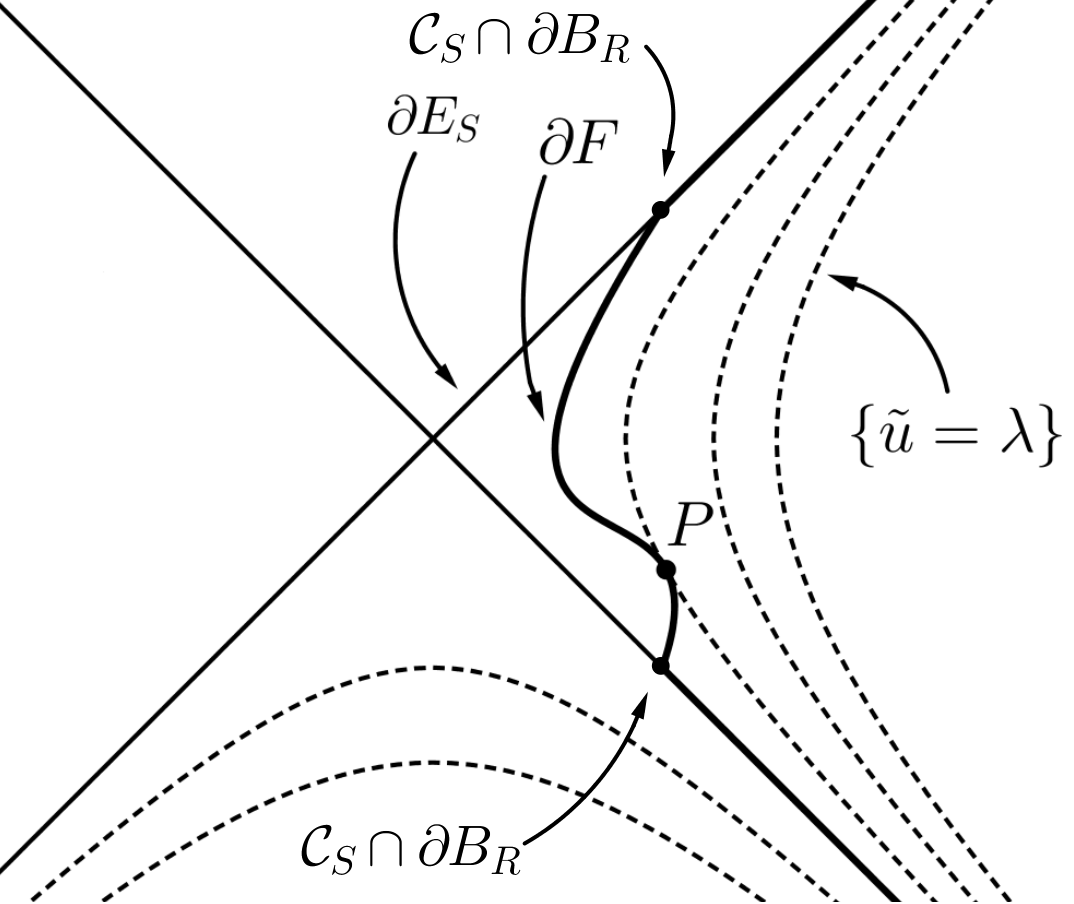}
\caption{The foliation argument to prove that the Simons cone $\cSC$ is minimizing}
\label{fig:6}       
\end{figure}

The point $P$ cannot belong to $\pa B_R$, since it holds that
$$\pa F \cap \pa B_R = \cSC \cap \pa B_R = \left\{ x: \tilde{u} (x) = 0 \right\} \cap \pa B_R  ,$$
and the level sets of $\tilde{u}$ do not intersect each other.
Thus, $P$ must be an interior point of $B_R$. But then we arrive at a contradiction, by the ``strong maximum principle'' argument commented right before this proof, applied with
$\Sigma_1= \pa F$ and $\Sigma_2= \left\{ x \in \RR^{2m} : \tilde{u} (x) = \la_* \right\} $.

As an exercise, write the details to prove the existence of a first leaf touching $\pa F$ at an interior point.

This same foliation argument will be used, in a simpler setting for graphs and the Allen-Cahn equation, in the proof of Theorem \ref{alba} in the next section.
\qed
\end{alternative}

\begin{remark}
The previous foliation argument gives more than the minimality of $\cSC$. It gives {\it uniqueness for the Dirichlet (or Plateau) problem} associated to the minimal surface equation with $\cSC$ as boundary value on $\pa B_R$. 
\end{remark}

\begin{remark}\label{rem:Foliation B-DG-G}
In our alternative proof of Theorem \ref{thm:SimCone} we used a clever foliation made of subsolutions and supersolutions. This sufficed to prove in a simple way Theorem \ref{thm:SimCone}, but required to (luckily) find the auxiliary function $\tilde{u} = |x'|^4 - |x''|^4 $.
Instead, in \cite{BdGG}, Bombieri, De Giorgi, and Giusti considered the foliation made of exact solutions to the minimal surface equation $\cH=0$, when $2 m \ge 8$. To this end, they proceeded as in the following exercise and wrote the minimal surface equation, for surfaces with rotational symmetry in $x'$ and in $x''$, as an ODE in $\RR^2$, finding Equation \eqref{equa:odest} below. They then showed that the solutions of such ODE in the $(s,t)$-plane do not intersect each other (and neither the Simons cone), and thus form a foliation (see Remark \ref{rem:dopohardy} for more information on this).
\end{remark}

\begin{exercise}
Let us set $s= |x'|$ and $t=|x''|$ for $x=(x', x'') \in \RR^m \times \RR^m$.
Check that the following two ODEs are equivalent to the minimal surface equation $\cH=0$ written in the $(s,t)$-variables for surfaces with rotational symmetry in $x'$ and in $x''$.
\begin{enumerate}[(i)]
\item As done in \cite{BdGG}, if we set a parametric representation $ s=s(\tau)$, $t = t(\tau)$, we find
\begin{equation}\label{equa:odest}
s''t'-s't''+(m-1) \left( (s')^2 + (t')^2 \right) \left( \frac{s'}{t} - \frac{t'}{s} \right) = 0 ;
\end{equation}
\item as done in \cite{Da}, if we set $s=e^{z(\te)} \cos (\te)$, $t = e^{z(\te)} \sin (\te)$ we get
$$
z''= \left( 1+(z')^2 \right) \left( (2m -1) - \frac{2(m-1) \cos (2 \te)}{ \sin (2 \te)} \, z' \right) .
$$
\end{enumerate}
The previous ODEs can be found starting from \eqref{mean curvature formula} when $u=u(s,t)$ depends only on $s$ and $t$. Alternatively, they can also be found computing the first variation of the perimeter functional in $\RR^{2m}$ written in the $(s,t)$-variables:
\begin{equation}\label{equa:stenergy}
c \int s^{m-1} t^{m-1} \, dH_1 (s,t),
\end{equation}
for some positive constant $c$, that becomes
%
$$
c \int e^{(2m-1)z(\te)} \, \cos^{m-1} (\te) \, \,  \sin^{m-1} (\te) \, \,  \sqrt{1+ \left( z'(\te) \right)^2 } \, d \te
$$
with the parametrization in point (ii).
\end{exercise}

\begin{remark}
For $n \ge 8$, there exist other minimizing cones, such as some of the {\it Lawson's cones}, defined by
$$
{\mathcal C}_L= \left\{ y= (y', y'') \in \RR^k \times \RR^{n-k} :  |y'|^2= c_{n,k} \, |y''|^2 \right\} \quad \text{for } k \ge 2 \text{ and } n-k \ge 2 .
$$
For details, see \cite{Da}.
\end{remark}

Notice that if $\pa E$ is a {\it cone} (i.e., $\la \pa E = \pa E$ for every $\la>0$), in the expressions \eqref{eq:1-1v}, \eqref{eq:1-1vBIS}, and \eqref{eq:minimal-1-2v2} we will always consider $\xi$ with compact support outside the origin (thus, not changing the possible singularity of the cone at the origin).

The next theorem was proved by Simons in 1968\footnote{Theorem \ref{thmcone} was proved in 1965 by De Giorgi for $n=3$, in 1966 by Almgren for $n=4$, and finally in 1968 by Simons in any dimension $n \le 7$.} (it is Theorem 10.10 in \cite{G}).
It is a crucial result towards the regularity theory of minimizing minimal surfaces.

\begin{theorem}[Simons~\cite{S}]\label{thmcone}
Let $E \subset \RR^n$ be an open set such that $\pa E$ is a stable minimal cone and 
$\partial E\setminus\{0\}$ is regular. 
Thus, we are assuming $\cH=0$ and
\begin{equation}
\int_{\partial E}\left\{|\delta\xi|^2-c^2\xi^2\right\}dH_{n-1}\geq 0
\label{eq:1-2v2}
\end{equation}
for every $\xi\in C^1(\partial E)$ with 
compact support outside the origin.

If $3 \leq n\leq 7$, then $\partial E$ is a hyperplane.
\end{theorem}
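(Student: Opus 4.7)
\emph{Plan.} My plan is to combine Simons' pointwise identity for the second fundamental form with a refined Kato inequality, feed the resulting differential inequality into the stability condition \eqref{eq:1-2v2} with a clever test function, and finally exploit the $(-1)$-homogeneity of the norm of the curvature on the cone to reduce matters to a one-dimensional weighted Hardy-type inequality whose sharp constant sets the dimensional threshold.

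On the smooth part $\pa E \setminus \{0\}$, let $A$ be the second fundamental form of $\pa E$, so that $c^2 = k_1^2 + \dots + k_{n-1}^2 = |A|^2$. A standard Bochner--Weitzenb\"ock computation using the Codazzi and Gauss equations together with minimality $\cH \equiv 0$ gives \emph{Simons' identity} $\tfrac{1}{2}\De_\Sigma c^2 = |\na_\Sigma A|^2 - c^4$. Because Codazzi forces $\na_\Sigma A$ to be fully symmetric in its three indices, one has the refined Kato inequality $|\na_\Sigma A|^2 \ge (1 + \tfrac{2}{n-1})|\de c|^2$ on the hypersurface $\pa E$. Combining yields the pointwise inequality
\[
c\,\De_\Sigma c + c^4 \ge \tfrac{2}{n-1}|\de c|^2 \qquad \text{on } \{c > 0\}.
\]

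Next, I would feed $\xi = c^{1+q}\eta$ into the stability inequality \eqref{eq:1-2v2}, with $\eta \in C^1_c(\pa E \setminus \{0\})$ and $q \in \RR$ a parameter to be optimized. Expanding $|\de\xi|^2 = (1+q)^2 c^{2q}|\de c|^2\eta^2 + 2(1+q)c^{1+2q}\eta\,\de c\cdot\de\eta + c^{2+2q}|\de\eta|^2$, multiplying the pointwise inequality above by $c^{2q}\eta^2$ and integrating by parts to eliminate $\int c^{2q}|\de c|^2\eta^2$, and using Cauchy--Schwarz to absorb the cross term, I would arrive at
\[
\int_{\pa E} c^{2q+4}\eta^2\, dH_{n-1} \le C(n,q)\int_{\pa E} c^{2q+2}|\de\eta|^2\, dH_{n-1}
\]
valid for every $q$ with $q^2 < \tfrac{2}{n-1}$. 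Finally, since $\pa E$ is a cone with link $L = \pa E \cap S^{n-1}$, the curvature scales as $c(r\omega) = c_L(\omega)/r$. Choosing $\eta(x) = \phi(|x|)$ radial separates variables and turns the display above into a one-dimensional weighted Hardy-type inequality for $\phi$, with coefficients $\int_L c_L^{2q+4}\,dH_{n-2}$ and $C(n,q)\int_L c_L^{2q+2}\,dH_{n-2}$ and radial weights of exponent $n-6-2q$ and $n-4-2q$. The sharpness of the weighted Hardy constant on the half-line is incompatible with such an inequality unless $c_L \equiv 0$, provided one can choose an admissible $q$ making the constants conflict; a direct exponent count shows that this is possible precisely when $3 \le n \le 7$, forcing $c\equiv 0$ on $\pa E$, and therefore $\pa E$ is a hyperplane.

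The \emph{main obstacles} are (i) the refined Kato inequality with the sharp constant $\tfrac{2}{n-1}$, which is special to the codimension-one minimal setting and relies crucially on the full symmetry of $\na_\Sigma A$ granted by Codazzi, and (ii) the delicate endpoint bookkeeping in the Hardy step, where $q$ must be simultaneously admissible for the stability estimate ($q^2 < \tfrac{2}{n-1}$) and strong enough to beat the Hardy constant on the radial line. It is precisely this balance that succeeds up to $n=7$ but breaks at $n=8$, in harmony with the Bombieri--De Giorgi--Giusti counterexample of Theorem~\ref{thm:SimCone}.
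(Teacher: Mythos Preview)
Your approach via Simons' identity and the refined Kato inequality is a legitimate and well-known alternative (essentially the Schoen--Simon--Yau strategy), but as written it does \emph{not} reach the sharp threshold $n=7$. The gap is in the final ``direct exponent count,'' and there are two related issues.

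First, your integral inequality $\int c^{2q+4}\eta^2 \le C(n,q)\int c^{2q+2}|\de\eta|^2$ carries different powers of $c$ on the two sides. After separating variables on the cone, the link integrals $\int_L c_L^{2q+4}$ and $\int_L c_L^{2q+2}$ do not cancel, and their ratio depends on the unknown link geometry; no purely dimensional conclusion follows from this alone. One can repair this by using the $(-1)$-homogeneity more carefully---namely $|\de c|^2 \ge (\pa_r c)^2 = c^2/r^2$---\emph{before} separating, which equalizes the powers of $c$.

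Second, and decisively, even after that fix the radial Hardy obstruction one obtains reads
\[
\frac{2}{n-1} \; > \; \frac{(n-5)^2}{4}, \qquad\text{i.e.}\qquad (n-1)(n-5)^2 < 8,
\]
which holds for $n=4,5,6$ but \emph{fails at $n=7$} (the left side is $24$). The free parameter $q$ does not help: after integrating the cross term $2q\int c^{1+2q}\eta\,\de c\cdot\de\eta$ by parts in $r$, the $q$-dependence cancels exactly and one is left with the same condition. Thus your scheme proves the theorem only for $n\le 6$.

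The paper's proof closes this gap by using a \emph{stronger}, cone-specific differential inequality (Lemma~\ref{lmsimons}):
\[
\tfrac{1}{2}\,\DLB c^2 - |\de c|^2 + c^4 \;\ge\; \frac{2c^2}{|x|^2}\,.
\]
The constant here is $2$, not $\tfrac{2}{n-1}$, and it does \emph{not} come from refined Kato. It is obtained by an algebraic argument that exploits the extra structure of a cone: in adapted coordinates at a point, the radial-along-the-cone direction satisfies $\de_i\nu^{n-1}=0$, and Euler's relation for the $(-1)$-homogeneous quantities $\de_i\nu^\al$ then produces the factor $2c^2/|x|^2$. Feeding $\xi=c\,\eta$ into stability gives
\[
\int_{\pa E} c^2\Bigl(|\de\eta|^2 - \frac{2\eta^2}{|x|^2}\Bigr)\,dH_{n-1} \;\ge\; 0,
\]
with the \emph{same} weight $c^2$ on both terms, so the link integral factors out cleanly. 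A radial choice of $\eta$ then reduces to the condition $(n-5)^2/4 < 2$, which is exactly $3\le n\le 7$. The extra strength of Lemma~\ref{lmsimons} over the general refined Kato inequality is precisely what buys the dimension $n=7$.
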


\begin{remark}\label{remarkchemiserve:1.17}
Simons result (Theorem~\ref{thmcone}), together with a blow-up argument and a monotonicity formula (as main tools), lead to the flatness of every minimizing minimal surface in all of $\RR^n$ if $n \le 7$ (see \cite[Theorem 17.3]{G} for a proof). The same tools also give the analyticity of every minimal surface that is minimizing in a given ball of $\RR^n$ if $n \le 7$ (see \cite[Theorem 10.11]{G} for a detailed proof). See also \cite{CF} for a great shorter exposition of these results.
\end{remark}

The dimension $7$ in Theorem \ref{thmcone} is optimal, since by Theorem \ref{thm:SimCone} the Simons cone provides a counterexample in dimension $8$.

The following is a very rough explanation of why the minimizer of the Dirichlet (or Plateau) problem is the Simons cone (and thus passes through the origin) in high dimensions -- in opposition with low dimensions, as in Figure \ref{fig:4}, where the minimizer stays away from the origin. In the perimeter functional written in the $(s,t)$-variables \eqref{equa:stenergy}, the Jacobian $s^{m-1} t^{m-1}$ becomes smaller and smaller near the origin as $m$ gets larger. Thus, lengths near $(s,t)=(0,0)$ become smaller as the dimension $m$ increases.

In order to prove Theorem \ref{thmcone}, we start with some important preliminaries. 
Recalling \eqref{def:tangentialgradient}, for $i=1 \dots ,n$, we define the tangential derivative
$$
\de_i \xi := \pa_i \xi - \nu^i \, \nu^k \xi_k ,
$$
where $\nu= \nu_E = (\nu^1, \dots, \nu^n): \pa E \to \RR^n$ is the exterior normal to $E$ on $\pa E$,
$\pa_i \xi = \pa_{x_i} \xi = \xi_i$ are Euclidean partial derivatives, and we used the standard convention of sum $\sum_{k=1}^{n}$ over repeated indices.
As mentioned right after definition \eqref{def:tangentialgradient}, even if to compute $\pa_i \xi$ requires to extend $\xi$ to a neighborhood of $\pa E$, $\de_i \xi$ is well defined knowing $\xi$ only on
$\pa E$ -- since it is a tangential derivative. Note also that we have $n$ tangential derivatives $\de_1, \dots, \de_n$ and, thus, they are linearly dependent, since $\pa E$ is $(n-1)$-dimensional. However, it is easy to check (as an exercise) that
\begin{equation*}
|\de \xi|^2= \sum_{i=1}^n |\de_i \xi|^2 .
\end{equation*}

We next define {\it the Laplace-Beltrami operator} on $\pa E$ by
\begin{equation}\label{def:Laplace-Beltrami}
\DLB \xi := \sum_{i=1}^n \delta_i\delta_i \xi ,
\end{equation}
acting on functions $\xi: \pa E \to \RR$.
For the reader knowing Riemannian calculus, one can check that
$$
\DLB \xi = {\dv}_T (\na_T \xi) = {\dv}_T (\de \xi) , 
$$
where $\na_T=\de$ is the tangential gradient introduced in \eqref{def:tangentialgradient} and $\dv_T$ denotes the (tangential) divergence on the manifold $\pa E$.

According to \eqref{mean curvature formula}, we have that
$$
\cH = {\dv}_T \nu = \sum_{i=1}^n \de_i \nu^i.
$$ 
We will also use the following {\it formula of integration by parts}:
\begin{equation}\label{eq:Giustitypo}
\int_{\pa E} \de_i \phi \, dH_{n-1} = \int_{\pa E} \cH \phi \nu^i dH_{n-1}                                 
\end{equation}
for every (smooth) hypersurface $\pa E$ and $\phi \in C^1 (\pa E)$ with compact support.
Equation \eqref{eq:Giustitypo} is proved in Giusti's book \cite[Lemma 10.8]{G}. However, there are
%
%
two typos in \cite[Lemma 10.8]{G}: $\cH$ is missed in the identity above, and there is an error of a sign in the proof of \cite[Lemma 10.8]{G}.


Replacing $\phi$ by $\phi \fhi$ in \eqref{eq:Giustitypo}, we deduce that 
\begin{equation}\label{equa:intparts2}
\int_{\pa E} \phi \, \de_i \fhi \, dH_{n-1} = - \int_{\pa E} (\de_i \phi) \fhi \, dH_{n-1}  + \int_{\pa E} \cH \phi \fhi \nu^i dH_{n-1} .
\end{equation}
From this, replacing $\phi$ by $\de_i \phi$ in \eqref{equa:intparts2} and using that $\sum_{i=1}^n \nu^i \de_i \phi = \nu \cdot \de \phi=0$, we also have
\begin{equation}\label{equa:intLB}
\int_{\pa E} \de \phi \cdot \de \fhi \, dH_{n-1} = \sum_{i=1}^n \int_{\pa E} \de_i \phi \, \de_i \fhi \, dH_{n-1} = - \int_{\pa E} (\DLB \phi) \fhi \, dH_{n-1} .
\end{equation}
 
\begin{remark}\label{rem:Jacobi operator}
For a minimal surface $\partial E$, the second variation of perimeter given by \eqref{eq:minimal-1-2v2} can also be rewritten, after \eqref{equa:intLB}, as 
$$
\int_{\partial E} \left\{ -  \DLB \xi -c^2 \xi \right\} \xi \, dH_{n-1}.
$$
The operator $- \DLB -c^2$ appearing in this expression is called {\it the Jacobi operator}. It is the linearization at the minimal surface $\pa E$ of the minimal surface equation $\cH =0$. 
\end{remark}

Towards the proof of Simons theorem, let us now take $\xi=\tilde{c}\eta$ in 
\eqref{eq:1-2v2}, where $\tilde{c}$ and $\eta$ are still arbitrary ($\eta$ with 
compact support outside the origin) and will be chosen later.
We obtain
\begin{eqnarray*}
0 &\le& \int_{\partial E} \left\{ |\delta\xi|^2-c^2 \xi^2 \right\} dH_{n-1} \\
&=& \int_{\partial E} \left\{ \tilde{c}^2 |\delta \eta|^2 + \eta^2 | \delta \tilde{c} |^2
+ \tilde{c} \de \tilde{c} \cdot \de \eta^2 - c^2 \tilde{c}^2 \eta^2 \right\} dH_{n-1} \\
&=&  \int_{\partial E} \left\{  \tilde{c}^2|\delta\eta|^2 - ( \DLB \tilde{c} +c^2 \tilde{c} ) \tilde{c} \eta^2 \right\} dH_{n-1} ,
\end{eqnarray*}
where at the last step we used integration by parts \eqref{equa:intparts2}.
This leads to the inequality
\begin{equation*}
\int_{\partial E}  \left\{ \DLB \tilde{c} +c^2 \tilde{c} \right\} \tilde{c} \eta^2  dH_{n-1} \le  \int_{\partial E} \tilde{c}^2|\delta\eta|^2 dH_{n-1} ,
\end{equation*}
where the term $ \DLB \tilde{c} +c^2 \tilde{c}$ appearing in the first integral is the linearized or Jacobi operator at $\pa E$ acting on $\tilde{c}$.

Now we make the choice $\tilde{c} = c$ and we arrive, as a consequence of stability, to
\begin{equation}\label{eq:Disuguaglianza prima di Teo Simons}
\int_{\partial E} \left\{  \frac{1}{2} \DLB c^2 -  | \delta c|^2 +c^4 \right\} \eta^2  dH_{n-1} \le  \int_{\partial E} c^2|\delta\eta|^2 dH_{n-1} .
\end{equation}

At this point, Simons proof of Theorem \ref{thmcone} uses the following inequality for the Laplace-Beltrami operator $\De_{LB}$ of $c^2$
(recall that $c^2$ is the sum of the squares of the principal curvatures 
of $\partial E$), in the case
when $\partial E$ is a stationary cone.

\begin{lemma}[Simons lemma~\cite{S}]
\label{lmsimons}
Let $E\subset\RR^n$ be an open set such that $\partial E$
is a cone with zero mean curvature and $\partial E\setminus\{0\}$ is regular.
Then, $c^2$ is homogeneous of degree $-2$ and, in $\partial E\setminus\{0\}$, we have
\[
\frac{1}{2} \DLB c^2 - |\delta c|^2 + c^4  \geq \frac{2c^2}{|x|^2}.
\]
\end{lemma}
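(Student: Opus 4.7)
The plan combines three ingredients: the scale invariance of the cone $\partial E$, Simons' Bochner-type identity for the second fundamental form $A$ of a minimal hypersurface in $\RR^n$, and Kato's inequality on the spherical cross-section $\Sigma:=\partial E\cap S^{n-1}$. The homogeneity claim is immediate: since $\partial E$ is a cone, the dilation $x\mapsto\lambda x$ maps $\partial E$ onto itself and rescales each principal curvature by $\lambda^{-1}$, so $c^2(\lambda x)=\lambda^{-2}c^2(x)$ on $\partial E\setminus\{0\}$.

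For the pointwise inequality I would first invoke Simons' identity $\Delta_{LB} h_{ij}+c^2 h_{ij}=0$, valid on any minimal hypersurface of $\RR^n$ (here $h_{ij}$ are the components of $A$ in a local orthonormal tangent frame). Contracting with $h^{ij}$ and using the Bochner-type decomposition $\tfrac{1}{2}\Delta_{LB}|A|^2=h^{ij}\Delta_{LB} h_{ij}+|\nabla_T A|^2$, where $\nabla_T$ is the induced Levi-Civita covariant derivative, yields
$$\tfrac{1}{2}\Delta_{LB} c^2+c^4=|\nabla_T A|^2.$$
Substituting into the left-hand side of the target inequality reduces the problem to the pointwise estimate
$$|\nabla_T A|^2-|\delta c|^2\;\ge\;\frac{2\,c^2}{|x|^2}.$$

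To prove this reduced inequality I would exploit the cone structure. Writing the induced metric on $\partial E\setminus\{0\}$ as $dr^2+r^2 g_\Sigma$ on $(0,\infty)\times\Sigma$ and picking the adapted orthonormal tangent frame $E_1=\partial_r$, $E_\alpha=r^{-1}\hat E_\alpha$ (with $\{\hat E_\alpha\}_{\alpha\ge 2}$ orthonormal on $(\Sigma,g_\Sigma)$), one has $A(E_1,\cdot)\equiv 0$ and $A(E_\alpha,E_\beta)=r^{-1}\tilde h_{\alpha\beta}(\omega)$ for a symmetric tensor $\tilde A=(\tilde h_{\alpha\beta})$ on $\Sigma$ with $|\tilde A|_\Sigma^2=r^2 c^2$. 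The Koszul formulas for the warped-product metric give $\nabla_{E_\alpha}E_1=r^{-1}E_\alpha$ and $\nabla_{E_1}E_\alpha=0$, from which a short computation gives $(\nabla_{E_\alpha}A)(E_1,E_\beta)=-r^{-2}\tilde h_{\alpha\beta}$. Counting the two symmetric slots $(E_1,E_\beta)$ and $(E_\beta,E_1)$ in $|\nabla_T A|^2$ then produces a surplus of exactly $2\,c^2/|x|^2$ that has no counterpart in $|\delta c|^2$; the purely radial contributions to $|\nabla_T A|^2$ and to $|\delta c|^2$ both equal $c^2/|x|^2$ and cancel; and the remaining spherical-spherical contribution to $|\nabla_T A|^2-|\delta c|^2$ is nonnegative by the tensorial Kato inequality $|\hat\nabla^\Sigma \tilde A|^2\ge\bigl|\hat\nabla^\Sigma|\tilde A|\bigr|^2$ on $\Sigma$. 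Summing the three contributions gives the bound.

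The main technical obstacle is this last step: one must separate $|\nabla_T A|^2$ and $|\delta c|^2$ into radial, mixed and spherical blocks and handle the Christoffel symbols of the warped product carefully enough to isolate the precise constant $2$ on the right-hand side, rather than a weaker one. This constant is sharp and essential -- it is exactly what later lets Simons' choice of test function close the stability inequality in dimensions $n\le 7$, and what fails starting from $n=8$, consistent with the Bombieri-De Giorgi-Giusti example of Theorem~\ref{thm:SimCone}.
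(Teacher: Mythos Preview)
Your proposal is correct and follows the intrinsic Riemannian route---essentially the argument of Simons' original paper and of Colding--Minicozzi---rather than the extrinsic computation the paper gives (which follows Giusti). The paper works entirely with the tangential derivatives $\delta_i=\partial_i-\nu^i\nu^k\partial_k$ of the unit normal, derives the identity
\[
\tfrac{1}{2}\Delta_{LB}c^2+c^4-|\delta c|^2=\tfrac{1}{2c^2}\sum\bigl[(\delta_\sigma\nu^\tau)(\delta_\gamma\delta_\alpha\nu^\beta)-(\delta_\alpha\nu^\beta)(\delta_\gamma\delta_\sigma\nu^\tau)\bigr]^2
\]
by repeated use of the commutation formula for the $\delta_i$'s and of $\Delta_{LB}\nu^j+c^2\nu^j=0$, and then chooses Euclidean coordinates at a point so that $\nu=e_n$ and $x_0$ lies on the $(n-1)$-axis; the vanishing $\delta_i\nu^{n-1}=0$ coming from $\langle x,\nu\rangle=0$ isolates an extra block in the sum, and homogeneity (Euler's formula applied to $\delta_i\nu^\alpha$) turns that block into exactly $2c^2/|x|^2$. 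Your argument reaches the same reduction $\tfrac{1}{2}\Delta_{LB}c^2+c^4=|\nabla_TA|^2$ via Simons' identity, but then exploits the warped-product metric $dr^2+r^2g_\Sigma$ directly: the mixed block $(\nabla_{E_\alpha}A)(E_1,E_\beta)=-r^{-2}\tilde h_{\alpha\beta}$ gives the $2c^2/|x|^2$, the radial blocks cancel, and Kato on $\Sigma$ handles the rest. What the paper's approach buys is that it stays entirely within the elementary tangential calculus set up in the notes (no covariant derivatives, no warped-product Christoffel symbols), at the cost of a somewhat opaque index computation; what yours buys is a cleaner geometric picture in which the constant $2$ is transparently the number of slots of $A$ into which the radial direction $E_1$ can be inserted, and the nonnegativity of the remainder is the familiar Kato inequality rather than an ad hoc sum of squares.
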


In Subsection \ref{subsec:SimLem} we will give an outline of the proof of this result.
We now use Lemma \ref{lmsimons} to complete the proof of Theorem \ref{thmcone}.

\begin{proof}[of Theorem \ref{thmcone}]
By using \eqref{eq:Disuguaglianza prima di Teo Simons} together with Lemma~\ref{lmsimons} we obtain
\begin{equation}
0\leq\int_{\partial
E}c^2\left\{|\delta\eta|^2-\frac{2\eta^2}{|x|^2}\right\}dH_{n-1}
\label{eq:1-mcstb}
\end{equation}
for every $\eta\in C^1(\partial E)$ with compact 
support outside the origin.
By approximation, the same holds for $\eta$ Lipschitz instead of $C^1$.


If $r=|x|$, we now choose $\eta$ to be the Lipschitz function
$$
\eta=
\bigg \{
\begin{array}{rl}
r^{- \al} \quad \text{if } r \le 1 \ \\
r^{- \be} \quad \text{if } r \ge 1 . \\
\end{array}
$$
By directly computing
\begin{equation}\label{equaz:provaetadelta}
|\de \eta|^2=
\bigg \{
\begin{array}{rl}
\al^2 r^{- 2 \al -2} \quad \text{if } r \le 1 \ \\
\be^2 r^{- 2 \be -2} \quad \text{if } r \ge 1 , \\
\end{array}
\end{equation}
we realize that if 
\begin{equation}\label{equaz:provaalfabeta}
\al^2<2 \, \mbox{ and } \, \be^2<2 ,
\end{equation}
then in \eqref{eq:1-mcstb} we have $|\de \eta|^2 - 2 \eta^2 / r^2 <0$.
If $\eta$ were an admissible function in \eqref{eq:1-mcstb}, we would then conclude that $c^2 \equiv 0$ on
$\pa E$. This is equivalent to $\pa E$ being an hyperplane.

Now, for $\eta$ to have compact support and hence be admissible, we need to cut-off $\eta$ near $0$ and infinity. As an exercise, one can check that the cut-offs work (i.e., the tails in the integrals tend to zero) if (and only if)
\begin{equation}\label{equazion:cutoffex}
\int_{\partial E}c^2 | \de \eta|^2  dH_{n-1}<\infty,
\end{equation}
or equivalently, since they have the same homogeneity,
$$
\int_{\partial E}c^2\frac{\eta^2}{|x|^2} \, dH_{n-1}<\infty.
$$
By recalling that the Jacobian on $\pa E$ (in spherical coordinates) is $r^{(n-1)-1}$, \eqref{equaz:provaetadelta}, and that, by Lemma~\ref{lmsimons}, $c^2$ is homogeneous of degree $-2$, we deduce that
%
%
\eqref{equazion:cutoffex} is satisfied if $n-6- 2\alpha  >-1$ and $n-6 - 2 \be <-1$. That is, if
\begin{equation}
\alpha < \frac{n -5 }{2} \quad\textrm{ and }\quad  \frac{n-5}{2} < \beta .
\label{eq:1-ineqs}
\end{equation}
%
%
If $3\le n\le 7$ then $(n -5 )^2/4<2$, i.e., $ - \sqrt{2} < (n -5) / 2 < \sqrt{2}$, and thus we can choose $\alpha$ and $\beta$ satisfying
\eqref{eq:1-ineqs} and \eqref{equaz:provaalfabeta}. It then follows that $c^2\equiv 0$, and
hence $\partial E$ is a hyperplane.
\qed
\end{proof}

The argument in the previous proof (leading to the dimension $n\le 7$) is very much related to a well known result: Hardy's inequality in $\RR^n$ -- which is presented next.

\subsection{Hardy's inequality}
As already noticed in Remark \ref{rem:Jacobi operator}, for a minimal surface $\partial E$ the second variation of perimeter \eqref{eq:minimal-1-2v2} can also be rewritten, by integrating by parts, as 
$$
\int_{\partial E} \left\{ -  \DLB \xi -c^2 \xi \right\} \xi dH_{n-1} .
$$
This involves the linearized or Jacobi operator $- \DLB -c^2$. If $x= |x| \si= r \si$, with
$\si \in S^{n-1}$, then $c^2= d(\si) / |x|^2$ (if $\pa E$ is a cone and thus $c^2$ is homogeneous of degree $-2$), where $d(\si)$ depends only on the angles $\si$. Thus, we are in the presence of the ``Hardy-type operator''
$$- \DLB - \frac{d(\si)}{|x|^2} ;$$
notice that $\DLB$ and $d(\si) / |x|^2 $ scale in the same way.
Thus, for all admissible functions $\xi$,
$$0 \le \int_{\partial E} \left\{ |\de \xi|^2 - \frac{d(\si)}{|x|^2} \, \xi^2 \right\} dH_{n-1} , \quad \mbox{if $\partial E$ is a stable minimal cone.}$$

Let us analyze the simplest case when $\pa E = \RR^n$ and $d \equiv constant$. Then, the validity or not of the previous inequality is given by Hardy's inequality, stated and proved next.

\begin{proposition}[Hardy's inequality]\label{Prop:Hardy inequality}
If $n \ge 3$ and $\xi \in C_{c}^{1}(\RR^n \setminus \lbrace 0 \rbrace)$, then
\begin{equation}\label{Hardyin}
\frac{(n-2)^{2}}{4}\int _{\RR^n}\frac{\xi ^{2}}{|x|^{2}} \,dx
\leq \int _{\RR^n}\left|\nabla \xi \right|^{2}dx .
\end{equation}
In addition, $(n-2)^{2} / 4$ is the best constant in this inequality and it is not achieved by any $0 \not\equiv \xi \in H^1 (\RR^n)$.

Moreover, if $a> (n-2)^{2} / 4$, then the Dirichlet spectrum of $- \DLB - a / |x|^2$  in the unit ball $B_1$ goes all the way to $- \infty$. That is,
\begin{equation}\label{equa:quotient}
\inf \frac{ \int_{B_1} \lbrace |\na \xi|^2 - a \, \frac{\xi^2}{|x|^2} \rbrace dx }{\int_{B_1} |\xi|^2 dx} = - \infty ,
\end{equation}
where the infimum is taken over $0 \not\equiv \xi \in H_0^1 (B_1)$.
\end{proposition}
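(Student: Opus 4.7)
My plan is to obtain \eqref{Hardyin} from a ``completing the square'' argument built on the radial vector field $V(x) := x/|x|^2$, which satisfies $\dv V = (n-2)/|x|^2$ on $\RR^n \setminus \{0\}$. For any real $\alpha$ and any $\xi \in C_c^1(\RR^n \setminus \{0\})$, I would expand the pointwise inequality $|\na \xi + \alpha \xi V|^2 \ge 0$, integrate, and rewrite the cross term via $V \cdot \na (\xi^2) = 2 \xi V \cdot \na \xi$ followed by integration by parts, which uses $\dv V = (n-2)/|x|^2$ to give $\int V \cdot \na (\xi^2)\, dx = -(n-2)\int \xi^2/|x|^2 \, dx$. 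This produces
\begin{equation*}
\int_{\RR^n} |\na \xi|^2 \, dx \;\ge\; \bigl( \alpha(n-2) - \alpha^2 \bigr) \int_{\RR^n} \frac{\xi^2}{|x|^2} \, dx,
\end{equation*}
and maximizing the quadratic in $\alpha$ at $\alpha = (n-2)/2$ yields the constant $(n-2)^2/4$.

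For non-attainment, equality throughout the argument (with the optimal $\alpha$) forces $\na \xi = -\tfrac{n-2}{2} \xi V$, and solving this ODE for radial $\xi$ gives exactly $\xi(x) = C |x|^{-(n-2)/2}$. A direct computation shows $|\na \xi|^2 = \tfrac{(n-2)^2}{4} |x|^{-n}$, which fails to be integrable at infinity, so no nonzero $\xi \in H^1(\RR^n)$ can saturate \eqref{Hardyin}. To confirm that $(n-2)^2/4$ is nevertheless optimal, I would test against a smooth radial cut-off of $|x|^{-(n-2)/2}$ supported in an annulus $\{\ve < |x| < R\}$ and show, via explicit spherical-coordinate integration, that the ratio of the two integrals in \eqref{Hardyin} tends to $1$ as $\ve \to 0$ and $R \to \infty$.

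For \eqref{equa:quotient} with $a > (n-2)^2/4$, the same cut-off family produces some $\xi_0 \in C_c^1(B_1 \setminus \{0\})$ with strictly negative numerator
\begin{equation*}
N(\xi_0) := \int_{B_1} \Bigl\{ |\na \xi_0|^2 - a \, \frac{\xi_0^2}{|x|^2} \Bigr\} dx < 0.
\end{equation*}
I would then exploit scaling: for $\lambda \ge 1$ the function $\xi_\lambda(x) := \xi_0(\lambda x)$ is still supported in $B_1$, and a change of variables gives $N(\xi_\lambda) = \lambda^{2-n} N(\xi_0)$ while $\int_{B_1} \xi_\lambda^2 \, dx = \lambda^{-n} \int_{\RR^n} \xi_0^2 \, dx$. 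Hence the Rayleigh quotient equals $\lambda^2 \cdot N(\xi_0) / \int \xi_0^2 \, dx$ and tends to $-\infty$ as $\lambda \to +\infty$.

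The step I expect to be the main nuisance is the cut-off computation confirming sharpness: since $|x|^{-(n-2)/2}$ lies exactly at the borderline of square integrability of its gradient (with logarithmic divergence in both tails), one has to organize the cut-off so that the boundary error terms are strictly of lower order than the shared logarithmic main term in the numerator and denominator. Everything else reduces to one integration by parts and one scaling.
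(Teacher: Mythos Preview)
Your argument is correct. The paper's route differs in two places. For \eqref{Hardyin}, the paper works ray by ray in spherical coordinates: it writes $r^{n-3}=\bigl(r^{n-2}/(n-2)\bigr)'$, integrates by parts along each ray, and then applies Cauchy--Schwarz to the resulting product before integrating over $S^{n-1}$. Your completing-the-square identity with the vector field $V=x/|x|^2$ is the global version of the same integration by parts, and it has the advantage that the optimal constant and the equality condition $\na\xi=-\tfrac{n-2}{2}\xi V$ fall out simultaneously without a separate inspection of the Cauchy--Schwarz equality case. For \eqref{equa:quotient}, the paper instead tests with the explicit family $\xi=r^{-\al}-1$ (cut off near the origin) with $(n-2)^2/4<\al^2<a$ and lets $\al\searrow (n-2)/2$; the numerator diverges to $-\infty$ while the denominator stays bounded. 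Your scaling argument is cleaner and more conceptual: once a single $\xi_0$ with negative numerator is in hand, the scale invariance of the numerator against the $\la^{-n}$ scaling of the $L^2$ norm does all the work. One small point to make explicit: your cut-off family for sharpness lives on annuli $\{\ve<|x|<R\}$ with $R\to\infty$, so to obtain $\xi_0\in C_c^1(B_1\setminus\{0\})$ you should note that the Hardy ratio is itself dilation invariant, and rescale into $B_1$ before invoking the scaling argument.
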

\begin{proof}
Using spherical coordinates, for a given $\si \in S^{n-1}$ we can write
\begin{equation}\label{har1}
\int_0^{+\infty} r^{n-1} r^{-2} \xi^2(r \si) \, dr = - \frac{1}{n-2} \int_0^{+\infty} r^{n-2} 2 \xi (r \si) \xi_r (r \si) \, dr .
\end{equation}
Here we integrated by parts, using that $r^{n-3}= \left( r^{n-2} / (n-2) \right)'$.

Now we apply the Cauchy-Schwarz inequality in the right-hand side to obtain
\begin{multline}\label{har2}
- \int_0^{+\infty} r^{n-2} \xi \xi_r \, r^{\frac{n-3}{2} } \, r^{- \frac{n-3}{2} }  \, dr \le \left( \int_0^{+\infty} r^{n-3} \xi^2 \, dr\right)^{\frac{1}{2}} \, \left(\int_0^{+\infty} r^{n-1} \xi^2_r \, dr \right)^{\frac{1}{2}}.
\end{multline}
Putting together \eqref{har1} and \eqref{har2} we get
$$\int_0^{+\infty} r^{n-3} \xi^2 \, dr \le \frac{2}{n-2} \left( \int_0^{+\infty} r^{n-3} \xi^2 \, dr\right)^{\frac{1}{2}} \, \left(\int_0^{+\infty} r^{n-1} \xi^2_r \, dr \right)^{\frac{1}{2}},$$
that is,
$$\frac{(n-2)^2}{4} \int_0^{+\infty} r^{n-1} \, \frac{\xi^2 }{r^2} \, dr \le \int_0^{+\infty} r^{n-1} \xi^2_r \, dr . $$
By integrating in $\si$ we conclude \eqref{Hardyin}.
An inspection of the equality cases in the previous proof shows that the best constant is not achieved.

Let us now consider $(n-2)^2 / 4 < \al^2 < a$ with $\al \searrow (n-2) / 2$. Take
$$
\xi = r^{- \al} -1  
$$
and cut it off near the origin to be admissible. If we consider the main terms in the quotient \eqref{equa:quotient}, we get
$$
\frac{ \int (\al^2 - a) r^{-2 \al -2} dx }{ \int r^{-2 \al } dx } .
$$
Thus it is clear that, as $\al \searrow (n-2) / 2$, the denominator remains finite independently of the cut-off, while the numerator is as negative as we want after the cut-off. Hence, the quotient tends to $- \infty$.
\qed
\end{proof}

\begin{remark}\label{rem:dopohardy}
As we explained in Remark \ref{rem:Foliation B-DG-G}, in \cite{BdGG}, Bombieri, De Giorgi, and Giusti used a foliation made of exact solutions to the minimal surface equation $\cH=0$ when $2 m \ge 8$.
These are the solutions of the ODE \eqref{equa:odest} starting from points $\left( s(0),t(0) \right)= \left( s_0,0 \right) $ in the $s$-axis and with vertical derivative $\left( s'(0), t'(0) \right) = \left( 0,1 \right) $.
They showed that, for $2m \ge 8$, they do not intersect each other, neither intersect the Simons cone $\cSC$. Instead, in dimensions $4$ and $6$ they do not produce a foliation and, in fact, each of them crosses infinitely many times $\cSC$, as showed in Figure \ref{fig:add2}. This reflects the fact that the linearized operator  $- \DLB -c^2$ on $\cSC$ has infinitely many negative eigenvalues, as in the simpler situation of Hardy's inequality in the last statement of Proposition \ref{Prop:Hardy inequality}.
%
\end{remark}

\begin{figure}[htbp]
\centering
\includegraphics[scale=.25]{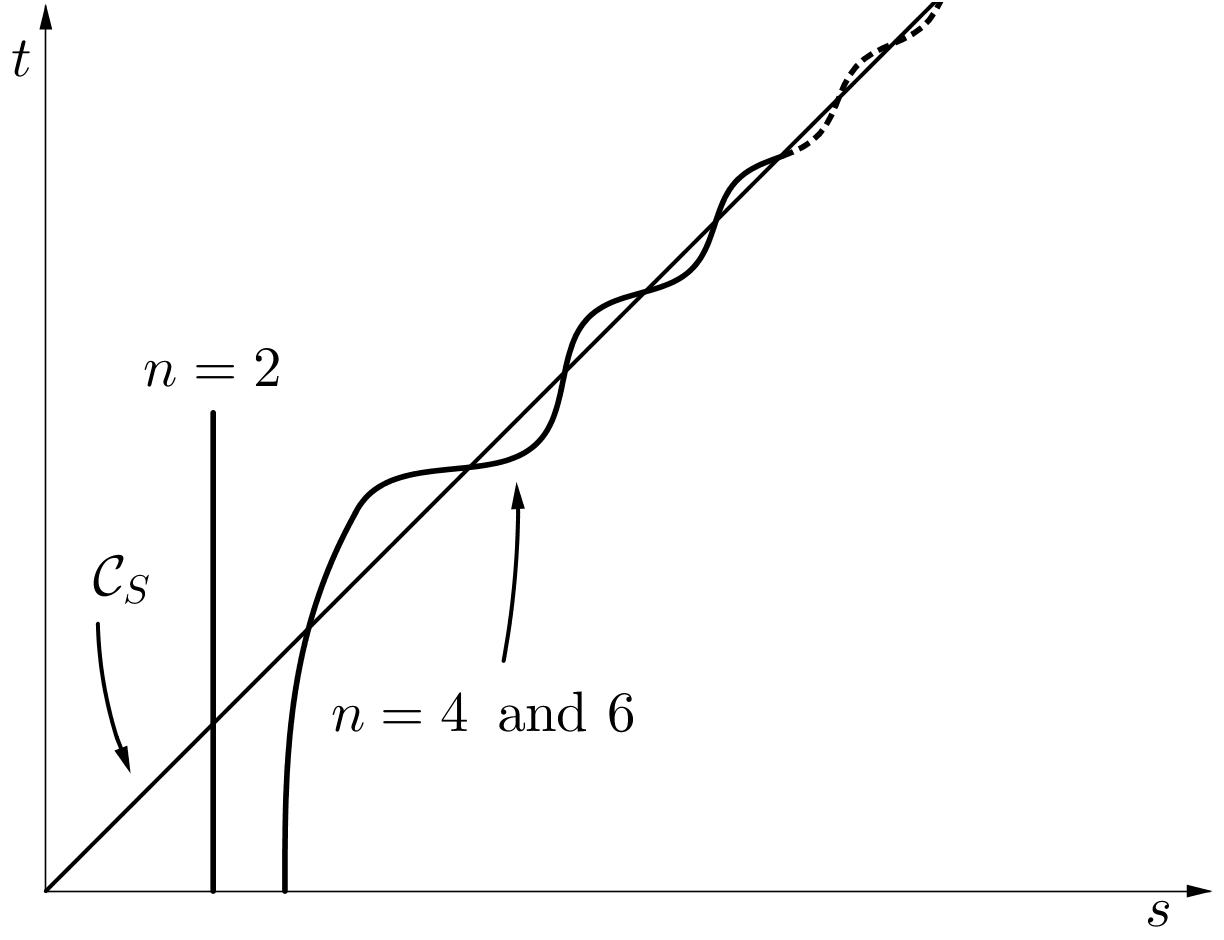}
\caption{Behaviour of the solutions to $\cH = 0$ in dimensions $2$, $4$, and $6$}
\label{fig:add2}       
\end{figure}

\subsection{Proof of the Simons lemma}\label{subsec:SimLem}
As promised, in this section we present the proof of Lemma \ref{lmsimons} with almost all details. 
We follow the proof contained in Giusti's book \cite{G}, where more details can be found (Simons lemma is Lemma 10.9 in \cite{G}). We point out that in the proof of \cite{G} there are the following two typos:
\begin{itemize}
\item as already noticed before, the identity in the statement of \cite[Lemma 10.8]{G} is missing $\cH$ in the second integrand. We wrote the corrected identity in equation \eqref{eq:Giustitypo} of these notes;
\item the label (10.18) is missing in line -8, page 122 of \cite{G}.
\end{itemize}

Alternative proofs of Lemma \ref{lmsimons} using intrinsic Riemaniann tensors can be found in the original paper of Simons \cite{S} from 1968 and also in the book of Colding and Minicozzi \cite{CM}.

\begin{notations}
We denote by $d(x)$ the signed distance function to $\pa E$, defined by
$$d(x) :=
\bigg \{
\begin{array}{rl}
\operatorname{dist} (x, \pa E) , & \ \ x \in \RR^n \setminus E , \\
- \operatorname{dist} (x, \pa E) , & \ \ x \in E . \\
\end{array}
$$
As we are assuming $E \setminus \{ 0 \}$ to be regular, we have that $d(x)$ is $C^2$ in a neighborhood of $\pa E \setminus \{ 0 \} $.

The normal vector to $\pa E$ is given by 
$$\nu= \na d= \frac{ \na d}{| \na d |};$$
we write
$$\nu= (\nu^1, \dots, \nu^n) = (d_1, \dots, d_n) ,$$
where we adopt the abbreviated notation
$$w_i=w_{x_i} = \pa_i w \, \mbox{ and } \, w_{ij}=w_{x_i x_j} = \pa_{ij} w $$
for partial derivatives in $\RR^n$.
As introduced after Theorem \ref{thmcone}, we will use the tangential derivatives
\begin{equation*}
\de_i := \pa_i - \nu^i \nu^k \pa_k
\end{equation*}
for $i=1, \dots, n$, and thus
$$
\de_i w = w_i - \nu^i \nu^k w_k,
$$
where we adopted the summation convention over repeated indices.
Finally, recall the Laplace-Beltrami operator defined in \eqref{def:Laplace-Beltrami}:
\begin{equation*}
\DLB := \de_i \de_i. 
\end{equation*}

\end{notations}


\begin{remark}
Since
\begin{equation}\label{equazione:nu2}
1= | \nu |^2 = \sum_{k=1}^n d_k^2 ,
\end{equation}
it holds that
$$
d_{jk} d_k =0 \, \mbox{ for } j=1, \dots,n.
$$
Thus, we have
$$
\de_i \nu^j= \de_i d_j = d_{ij} - d_i d_k d_{kj}= d_{ij}=d_{ji},
$$
which leads to
\begin{equation*}
\de_i \nu^j= \de_j \nu^i .
\end{equation*}
\end{remark}

\begin{exercise}
Using $\de_i \nu^j = d_{ij}$, verify that
\begin{equation*}
\cH= \de_i \nu^i ,
\end{equation*}
\begin{equation}
\label{eq:def c indici}
c^2= \de_i \nu^j \de_j \nu^i = \sum_{i,j=1}^n (\de_i \nu^j)^2 .
\end{equation}
\end{exercise}

The identities
\begin{equation*}
\nu^i \de_i = 0 ,
\end{equation*}
\begin{equation}
\label{eq:utileperSimonslemmaproof}
\nu^i \de_j \nu^i = 0 , \, \mbox{ for } j=1,\dots,n ,
\end{equation}
will be used often in the following computations. The first one follows from the definition of $\de_i$, while the second is immediate from \eqref{equazione:nu2}.

The next lemma will be useful in what follows.
\begin{lemma}
The following equations hold for every smooth hypersurface $\pa E$:
\begin{equation}
\label{eq:Giusti 1}
\de_i \de_j = \de_j \de_i + (\nu^i \de_j \nu^k - \nu^j \de_i \nu^k)\de_k ,
\end{equation}
\begin{equation}
\label{eq:Giusti 2}
\DLB \nu^j + c^2 \nu^j = \de_j \cH  \, (=0 \, \mbox{ if $\pa E$ is stationary}) ,
\end{equation}
for all indices $i$ and $j$.
\end{lemma}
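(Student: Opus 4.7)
The plan is to derive \eqref{eq:Giusti 1} by a direct commutator computation using the tangential projector, and then to bootstrap \eqref{eq:Giusti 2} from \eqref{eq:Giusti 1} via the symmetry $\de_i \nu^j = \de_j \nu^i$ recorded just before the lemma.

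For \eqref{eq:Giusti 1}, write $\de_i = P_i^k \pa_k$ with the tangential projector $P_i^k := \delta_i^k - \nu^i \nu^k$ and compute
\[
\de_i \de_j f \;=\; P_i^a \pa_a \bigl(P_j^k \pa_k f\bigr) \;=\; P_i^a (\pa_a P_j^k)\, \pa_k f \;+\; P_i^a P_j^k\, \pa_{ak} f.
\]
After relabeling dummy indices the Hessian term $P_i^a P_j^k \pa_{ak} f$ is symmetric in $(i,j)$ and drops out of the commutator. Differentiating $P_j^k$ and using $P_i^a \pa_a \nu^m = \de_i \nu^m$ gives $P_i^a(\pa_a P_j^k) = -\de_i \nu^j\, \nu^k - \nu^j\, \de_i \nu^k$. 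Coupling the second piece with $\pa_k f$ and noting that $\de_i \nu^k\, \pa_k f = \de_i \nu^k\, \de_k f$ (because $\nu^k \de_i \nu^k = 0$ by \eqref{eq:utileperSimonslemmaproof}), the commutator becomes
\[
(\de_i \de_j - \de_j \de_i) f \;=\; (\de_j \nu^i - \de_i \nu^j)\, \nu^k \pa_k f \;+\; (\nu^i \de_j \nu^k - \nu^j \de_i \nu^k)\, \de_k f.
\]
The symmetry $\de_i \nu^j = \de_j \nu^i$ of the second fundamental form kills the first bracket, and \eqref{eq:Giusti 1} follows.

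For \eqref{eq:Giusti 2}, the same symmetry (applied pointwise for each fixed pair $(i,j)$) lets me rewrite
\[
\DLB \nu^j \;=\; \sum_i \de_i \de_i \nu^j \;=\; \sum_i \de_i \de_j \nu^i,
\]
and then I apply \eqref{eq:Giusti 1} to each summand with $f = \nu^i$ and a fresh dummy index $k$:
\[
\sum_i \de_i \de_j \nu^i \;=\; \de_j \Bigl(\sum_i \de_i \nu^i\Bigr) \;+\; \sum_{i,k}\bigl(\nu^i \de_j \nu^k - \nu^j \de_i \nu^k\bigr)\, \de_k \nu^i.
\]
The first term equals $\de_j \cH$. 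In the double sum, the coefficient of $\de_j \nu^k$ is $\sum_i \nu^i \de_k \nu^i = 0$ by \eqref{eq:utileperSimonslemmaproof}, while what remains is $-\nu^j \sum_{i,k} \de_i \nu^k \de_k \nu^i = -\nu^j c^2$ by \eqref{eq:def c indici} (after using $\de_k \nu^i = \de_i \nu^k$). This yields $\DLB \nu^j = \de_j \cH - c^2 \nu^j$, which is exactly \eqref{eq:Giusti 2}; the parenthetical vanishing for stationary $\pa E$ is immediate.

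The only real obstacle is clean index bookkeeping: identifying the symmetric Hessian term that drops from the commutator, and applying the three key facts $\de_i \nu^j = \de_j \nu^i$, $\nu^i \de_k \nu^i = 0$, and $c^2 = \de_i \nu^k \de_k \nu^i$ at the right moments. There is no analytic difficulty; \eqref{eq:Giusti 1} is the extrinsic avatar of the Ricci commutation identity on the hypersurface, and \eqref{eq:Giusti 2} is the Jacobi equation satisfied by the Gauss map of a stationary hypersurface.
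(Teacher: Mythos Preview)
Your proof is correct and complete. The paper itself does not supply a proof of this lemma but simply refers the reader to \cite[Lemma 10.7]{G}; your argument is the standard extrinsic computation one finds there, carried out cleanly with the projector $P_i^k = \delta_i^k - \nu^i\nu^k$ and the identities $\de_i\nu^j=\de_j\nu^i$, $\nu^i\de_k\nu^i=0$, and $c^2=\de_i\nu^k\de_k\nu^i$ already recorded in the text.
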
 
For a proof of this lemma, see \cite[Lemma 10.7]{G}.

Equation \eqref{eq:Giusti 2} is an important one. It says that the normal vector $\nu$ to a minimal surface solves the Jacobi equation $\left( \DLB + c^2 \right) \nu \equiv 0$ on $\pa E$. This reflects the invariance of the equation $\cH = 0$ by translations (to see this, write a perturbation made by a small translation as a normal deformation, as in Figure \ref{fig:2}).

If $\pa E$ is stationary, from \eqref{eq:Giusti 1} and by means of simple calculations, one obtains that
\begin{equation}
\label{eq:Giusti 3}
\DLB \de_k = \de_k \DLB - 2 \nu^k (\de_i \nu^j) \de_i \de_j - 2 (\de_k \nu^j)(\de_j \nu^i) \de_i .
\end{equation}
Equation \eqref{eq:Giusti 3} is the formula with the missed label (10.18) in \cite{G}.


We are ready now to give the

\begin{sketch}[of Lemma \ref{lmsimons}]
By \eqref{eq:def c indici} we can write that
$$
\frac{1}{2} \DLB c^2 = (\de_i \nu^j) \DLB \de_i \nu^j + \sum_{i,j,k} (\de_k \de_i \nu^j)^2.
$$
Then, using \eqref{eq:Giusti 2}, \eqref{eq:Giusti 3}, and the fact $\cH=0$, we have
$$
\frac{1}{2} \DLB c^2 = - (\de_i \nu^j) \de_i (c^2 \nu^j) - 2 (\de_i \nu^j) (\de_k \nu^l) (\de_l \nu^j) (\de_i \nu^k) + \sum_{i,j,k} (\de_k \de_i \nu^j)^2 ,
$$
and by \eqref{eq:Giusti 1}
$$
\frac{1}{2} \DLB c^2  = -c^4 - 2 \nu^i \nu^l (\de_j \de_l \nu^k) (\de_k \de_i \nu^j) + \sum_{i,j,k} (\de_k \de_i \nu^j)^2.
$$
Now, if $x_0 \in \pa E \setminus \{ 0 \}$, we can choose the $x_n$-axis to be the same direction as $\nu(x_0)$. Thus, $\nu(x_0)=(0,\dots,0,1)$ and at $x_0$ we have
$$
\nu^n=1 , \, \de_n=0 ,
$$
$$
\nu^{\al}=0 , \, \de_\al= \pa_\al \, \mbox{ for } \al=1,\dots,n-1 .
$$
Hence, computing from now on always at the point $x_0$, and by using \eqref{eq:Giusti 1} and \eqref{eq:utileperSimonslemmaproof}, we get
\begin{eqnarray*}
\frac{1}{2} \DLB c^2 &=& -c^4 + \sum_{\al,\be,\ga} (\de_\ga \de_\al \nu^\be)^2 + 2 \sum_{\al, \ga} (\de_\ga \de_\al \nu^n)^2 - 2 \sum_{\al, \be} (\de_\al \de_\be \nu^n)^2
\\
&=& -c^4 + \sum_{\al,\be,\ga} (\de_\ga \de_\al \nu^\be)^2 ,
\end{eqnarray*}
where all the greek indices indicate summation from $1$ to $n-1$.

On the other hand, we have 
$$
| \de c |^2 = \frac{1}{c^2} (\de_\al \nu^\be) (\de_\ga \de_\al \nu^\be) (\de_\si \nu^\tau) (\de_\ga \de_\si \nu^\tau) ,
$$
and hence
\begin{equation*}
\frac{1}{2} \DLB c^2 + c^4 - | \de c |^2 = \frac{1}{2 c^2} \sum_{\al, \be, \ga, \si, \tau} \left[ (\de_\si \nu^{\tau}) (\de_\ga \de_\al \nu^\be) - (\de_\al \nu^\be) (\de_\ga \de_\si \nu^\tau) \right]^2 .
\end{equation*}

Now remember that $\pa E$ is a cone with vertex at the origin, and thus
$<x, \nu> = 0$ on $\pa E$.
Since we took $\nu= (0, \dots , 0 , 1)$ at $x_0$, we may choose coordinates in such a way that $x_0$ lies on the $(n-1)$-axis. In particular, $\nu^{n-1} = 0$ at $x_0$ and
$$
0 = \de_i <x, \nu> = <\de_i x, \nu>+<x, \de_i \nu> = <x, \de_i \nu>,
$$
which leads to
$$
\de_i \nu^{n-1} = 0 \, \mbox { at } \, x_0 .
$$

If now the letters $A,B,S,T$ run from $1$ to $n-2$, he have
\begin{equation*}
\begin{split}
\frac{1}{2} \DLB c^2 + c^4 - | \de c |^2 = & \frac{1}{2 c^2} \sum_{A,B,S,T, \ga} \left[ (\de_S \nu^T) (\de_\ga \de_A \nu^B) - (\de_A \nu^B) (\de_\ga \de_S \nu^T) \right]^2 
\\
& + \frac{2}{c^2} \sum_{S,T, \ga, \al} (\de_S \nu^T)^2 (\de_\ga \de_{n-1} \nu^{\al})^2  \ge 2 \sum_{\al, \ga} ( \de_\ga \de_{n-1} \nu^{\al} )^2.
\end{split}
\end{equation*}
From \eqref{eq:Giusti 1}, $\de_i \de_{n-1} = \de_{n-1} \de_i $ and $\de_{n-1} = \pa_{n-1} = \pm \left( x^j /|x| \right) \pa_j$ at $x_0$.
Since $\pa E$ is a cone, $\nu$ is homogeneous of degree 0 and hence $\de_i \nu^\al$ is homogeneous of degree $-1$. Thus, by Euler's theorem on homogeneous functions, we have
$$
\de_{n-1} \de_i \nu^{\al} = \pm  \frac{x^j \pa_j }{|x|}  \de_i \nu^{\al} = \mp \frac{1}{|x|} \de_i \nu^{\al} ,
$$ 
and hence
$$
2 \sum_{i, \al } ( \de_i \de_{n-1} \nu^{\al} )^2 = \frac{2}{ | x |^2} \sum _{i, \al} (\de_i \nu^\al)^2 = \frac{2 c^2}{| x |^2} .
$$
The proof is now completed.
\qed
\end{sketch}

\subsection{Comments on: harmonic maps, free boundary problems, and nonlocal minimal surfaces}
Here we briefly sketch arguments and results similar to the previous ones on minimal surfaces, now for three other elliptic problems.

\subsubsection{Harmonic maps}
Consider the energy
\begin{equation}
\label{eq:harm_enrg}
E(u)=\frac{1}{2}\int_{\Omega}|Du|^2dx
\end{equation}
for $H^1$ maps $u:\Omega\subset\RR^n \to \overline{S^N_+} $ from a domain
$\Omega$ of $\RR^n$ into the closed upper hemisphere
$$\overline{S^N_+}=\{y\in \mathbb{R}^{N+1}: |y|=1, y_{N+1}\ge 0\}.$$

A critical point of $E$ is called a (weakly) {\it harmonic map}.
When a map minimizes $E$ among all maps with values into $\overline{S^N_+}$ and with same boundary
values on $\partial \Omega$, then it is called a 
minimizing harmonic map.

From the energy \eqref{eq:harm_enrg} and the restriction $|u| \equiv 1$, one finds that the equation for harmonic maps is given by
\begin{equation*}
-\Delta u=|Du|^2u \qquad\text{in }\Omega.
\end{equation*}

In 1983, J\"ager and Kaul proved the following theorem, that we state here without proving it (see the original paper \cite{JK} for the proof).

\begin{theorem}[J\"ager-Kaul~\cite{JK}]\label{thm:equator}
The equator map 
\begin{equation*}
u_*:B_1\subset\RR^n \rightarrow \overline{S^n_+},
\quad x\mapsto \left(x/|x|,0\right)
\end{equation*}
is a minimizing harmonic map on the class
$$
{\mathcal C}=\{u\in H^1(B_1\subset\RR^n,S^n) : u=u_* \text{ on }\partial B_1\}
$$
if and only if $n\ge 7$.
\end{theorem}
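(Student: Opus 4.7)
The plan is to translate into the harmonic-map setting the dichotomy already seen for stable minimal cones (Theorem~\ref{thmcone} versus Theorem~\ref{thm:SimCone}): the dimensional threshold will again be dictated by Hardy's inequality (Proposition~\ref{Prop:Hardy inequality}). The point is that $u_*$ is an explicit solution of $-\Delta u = |Du|^2 u$ with energy density $|Du_*|^2/2 = (n-1)/(2|x|^2)$ (a direct calculation from $\partial_i(x_j/|x|) = \delta_{ij}/|x| - x_ix_j/|x|^3$), so the Jacobi operator at $u_*$ is exactly the Hardy-type operator $-\Delta - (n-1)/|x|^2$ on $B_1$.

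For the ``only if'' direction, assume $n \le 6$. Given $\eta \in C_c^\infty(B_1)$ with $\eta \ge 0$, consider the admissible family
\begin{equation*}
u_t(x) := \frac{u_*(x) + t\,\eta(x)\, e_{n+1}}{\sqrt{1+t^2 \eta(x)^2}} \in \overline{S^n_+},
\end{equation*}
which agrees with $u_*$ on $\partial B_1$ and maps into the closed upper hemisphere for every $t \ge 0$. A direct Taylor expansion, using that $\langle u_*, \partial_i u_* \rangle = 0$ and that the $(n{+}1)$-th component of $u_*$ vanishes, gives
\begin{equation*}
E(u_t) = E(u_*) + \frac{t^2}{2}\int_{B_1}\left(|D\eta|^2 - \frac{n-1}{|x|^2}\,\eta^2\right)dx + o(t^2).
\end{equation*}
When $n \le 6$ one has $n-1 > (n-2)^2/4$, so by the last statement of Proposition~\ref{Prop:Hardy inequality} there exists $\eta \in H_0^1(B_1)$ (then truncated to be compactly supported) making the Jacobi form on the right strictly negative. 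Hence $E(u_t) < E(u_*)$ for small $t>0$, and $u_*$ is not minimizing.

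For the ``if'' direction, assume $n \ge 7$, so that $(n-2)^2/4 \ge n-1$ and Hardy immediately yields stability of $u_*$. Stability is only a second-order assertion, however, so to upgrade it to global minimality I would first reduce to equivariant competitors by spherical rearrangement along the axis $e_{n+1}$: this produces, from any $u=(v,w) \in \mathcal{C}$, an axially symmetric competitor $\tilde u(x) = (\sin\psi(r)\, x/|x|,\cos\psi(r))$ with $r=|x|$, $\psi(1)=\pi/2$, and $E(\tilde u) \le E(u)$. A direct computation (using that the identity map $x/|x|:B_1 \to S^{n-1}$ has density $(n-1)/|x|^2$) gives
\begin{equation*}
E(\tilde u) = \tfrac{\omega_{n-1}}{2}\int_0^1\!\left(\psi_r^2 + (n-1)\,\frac{\sin^2\psi}{r^2}\right) r^{n-1}\,dr,
\end{equation*}
and the problem reduces to showing that, among $\psi \in H^1(0,1)$ with $\psi(1)=\pi/2$, the constant $\psi \equiv \pi/2$ is the minimizer when $n \ge 7$. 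This 1D problem is then attacked either by a foliation/calibration argument -- constructing an explicit first integral of the associated Jacobi ODE that distinguishes $\psi\equiv\pi/2$ as the only admissible trajectory -- or by a direct energy estimate: writing $\psi = \pi/2 + \varphi$ and Taylor-expanding $\sin^2\psi = 1 - \varphi^2 + O(\varphi^4)$, the quadratic part is absorbed by the sharp Hardy inequality precisely in the regime $n \ge 7$, and the higher-order correction is then shown to have the right sign.

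The main obstacle is the sufficiency direction: stability of $u_*$ does not by itself yield minimality, and two genuinely nontrivial steps stand in the way. First, spherical symmetrization for $S^n$-valued Sobolev maps is delicate, since the target is neither flat nor convex; one must invoke rearrangement techniques adapted to the sphere to guarantee that $\tilde u$ really has no larger energy than $u$ and respects the boundary condition $u_* = (x/|x|, 0)$. Second, the reduced one-dimensional functional is nonconvex in $\psi$ because of the $\sin^2\psi$ term, so the linear Hardy inequality does not apply directly to $\psi - \pi/2$; one must prove a nonlinear comparison, and the sharp threshold $n=7$ emerges from a sign change in the first integral of the Jacobi ODE for $(\psi_r^2 + (n-1)\sin^2\psi/r^2)\,r^{n-1}$, completely parallel to the way $n=7$ appeared in Simons' argument via the constant $(n-2)^2/4$ in Proposition~\ref{Prop:Hardy inequality}. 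This nonlinear ODE comparison is the heart of Jäger-Kaul's proof and is the step I expect to consume essentially all of the work.
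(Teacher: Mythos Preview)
The paper does not prove this theorem: it explicitly states Theorem~\ref{thm:equator} ``without proving it (see the original paper \cite{JK} for the proof)'' and adds only that the ``if'' direction ``uses a calibration argument.'' So there is no paper proof to compare your attempt against; I can only evaluate your outline on its own terms and against that single hint.

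Your ``only if'' direction is correct and essentially complete: the computation $|Du_*|^2=(n-1)/|x|^2$ is right, the normal variation $u_t$ is admissible, and the second variation reduces exactly to the Hardy form $\int(|D\eta|^2-(n-1)|x|^{-2}\eta^2)$, which is indefinite precisely when $(n-2)^2/4<n-1$, i.e.\ $n\le 6$. The paper, incidentally, obtains the same conclusion by a different route---via Theorem~\ref{thm:reg_harm} and Lemma~\ref{lem:harm}, which classify degree-zero minimizers rather than computing a second variation---so your argument here is more direct than what the notes present.

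For the ``if'' direction your outline is honest about where the work lies, but the strategy may diverge from what the paper points to. You propose (i) a spherical symmetrization to reduce to equivariant maps $\psi(r)$, then (ii) a one-dimensional comparison. Step~(i) is, as you say, genuinely delicate for $S^n$-valued maps and is not a standard rearrangement; you would need to justify that the symmetrized competitor still lies in $H^1(B_1,S^n)$, respects the boundary datum, and does not increase energy. The paper's hint (``calibration'') suggests instead a direct pointwise or null-Lagrangian comparison with $u_*$ for \emph{arbitrary} competitors, bypassing symmetrization entirely---in the spirit of the calibration you saw for the Simons cone. If you pursue the symmetrization route you must supply that reduction in full; if you follow the calibration hint, the argument is different in structure from what you wrote, though your dimensional heuristic via Hardy remains the correct guide to why $n=7$ is the threshold.
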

We just mention that the proof of the ``if" in Theorem \ref{thm:equator} uses a calibration argument.

Later, Giaquinta and Sou\v cek \cite{GS}, and independently Schoen and Uhlenbeck \cite{SU2}, proved the following result.

\begin{theorem}[Giaquinta-Sou\v cek \cite{GS}; Schoen-Uhlenbeck \cite{SU2}]\label{thm:reg_harm}
\hfill\\
Let $u:B_1\subset\RR^n\to \overline{S^N_+}$ be a minimizing harmonic map,
homogeneous of degree zero, 
into the closed upper hemisphere $\overline{S^N_+}$. If $3\le n \le 6$,
then $u$ is constant.
\end{theorem}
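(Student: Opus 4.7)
The plan is to closely parallel Simons' proof of Theorem \ref{thmcone}: derive a stability inequality from the minimizing property, exploit $0$-homogeneity to separate radial and angular variables, and conclude via the Hardy inequality (Proposition \ref{Prop:Hardy inequality}) when $n \le 6$.

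First I would set up a second-variation inequality. Let $f:=u_{N+1}\ge 0$. Taking the $(N+1)$-th component of $-\De u=|Du|^2 u$ gives $-\De f=|Du|^2 f$, so by the strong maximum principle either $f\equiv 0$ (and $u$ maps into the equator $S^{N-1}$) or $f>0$ in the interior. In both cases the perturbations
\[
u_t(x):=\frac{u(x)+t\,\fhi(x)\,e_{N+1}}{\bigl|u(x)+t\,\fhi(x)\,e_{N+1}\bigr|},\qquad \fhi\in C_c^\infty(B_1),
\]
stay in $\ol{S^N_+}$ for $|t|$ small: two-sided when $f>0$ on $\operatorname{supp}\fhi$, and for any $\fhi$ by a symmetry argument in the equator case. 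Minimality then forces $E''(0)\ge 0$; expanding to second order, using $-\De u=|Du|^2 u$ to kill the first-order term, and integrating by parts one mixed term (exploiting $-\De f=|Du|^2 f$), one arrives at the J\"ager--Kaul-type inequality
\[
\int_{B_1}\!\Bigl[\bigl(|\na\fhi|^2-|Du|^2\fhi^2\bigr)(1-u_{N+1}^2)+3\,|\na u_{N+1}|^2\,\fhi^2\Bigr]\,dx\ \ge\ 0
\]
valid for all $\fhi\in C_c^\infty(B_1)$. This plays the role of \eqref{eq:1-2v2}.

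Next, exploit $0$-homogeneity. Writing $\si=x/|x|$, $\ol u(\si):=u(\si)$, and $d(\si):=|D_{S^{n-1}}\ol u(\si)|^2\ge 0$, one has $|Du(x)|^2=d(\si)/|x|^2$ and $|\na u_{N+1}|^2=|D_\si \ol u_{N+1}|^2/|x|^2$. I would test with $\fhi(x)=|x|^{-\al}\chi(\si)$ (cut off near $0$ and $\infty$ exactly as in the proof of Theorem \ref{thmcone}) and let $\al\to(n-2)/2$, the Hardy threshold. Separating variables and passing to the limit in the cutoffs (admissible for $n\ge 3$), one obtains on $S^{n-1}$
\[
\int d\,(1-\ol u_{N+1}^2)\,\chi^2\,d\si \,\le\, \tfrac{(n-2)^2}{4}\!\int (1-\ol u_{N+1}^2)\chi^2 + \!\int (1-\ol u_{N+1}^2)|D_\si\chi|^2 + 3\!\int|D_\si\ol u_{N+1}|^2\chi^2\,d\si.
\]

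Finally, to conclude I would choose $\chi$ cleverly and combine the previous inequality with the harmonic map equation on the sphere, $-\De_{S^{n-1}}\ol u=d\,\ol u$. Multiplying by $\ol u_{N+1}\chi^2$ and integrating by parts yields $\int|D_\si\ol u_{N+1}|^2\chi^2=\int d\,\ol u_{N+1}^2\chi^2-2\!\int \ol u_{N+1}\chi\,D_\si\ol u_{N+1}\cdot D_\si\chi$, allowing one to absorb the third correction term on the right. The choice $\chi\equiv 1$ (or a small perturbation thereof) should then reduce everything to an estimate of the form $\int d\,d\si\le\tfrac{(n-2)^2}{4}|S^{n-1}|$; the contradiction arises because a non-constant harmonic map $\ol u:S^{n-1}\to\ol{S^N_+}$ must have $\int d\,d\si\ge(n-1)\cdot c$ for some $c>0$ (as one sees on the equator map, where $d\equiv n-1$), and $n-1>(n-2)^2/4$ precisely when $n\le 4+2\sqrt{2}$, i.e., $n\le 6$. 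The main obstacle I expect is this last step: controlling the two correction terms -- the weight $(1-u_{N+1}^2)$ on the left and the positive contribution $3|\na u_{N+1}|^2\fhi^2$ on the right -- in a way that cleanly preserves the Hardy gain for general (possibly non-equatorial) $\ol u$. This is the analog of the sharp pointwise estimate in Simons' lemma (Lemma \ref{lmsimons}), and realizing it rigorously likely requires either an auxiliary Bochner identity on $S^{n-1}$ for the Jacobi operator of $\ol u$, or a reduction to the equator case via a convex-hull/projection argument.
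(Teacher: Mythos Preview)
Your route is genuinely different from the paper's, and the obstacle you flag at the end is a real gap, not a technicality.

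The paper (following Giaquinta--Sou\v cek) first passes to stereographic coordinates $v=P\circ u:B_1\to\RR^N$, with energy $\int |Dv|^2/(1+|v|^2)^2$ and $|v|\le 1$. Testing the \emph{first} variation with $\xi=v(x)\eta(|x|)$ and using $0$-homogeneity forces either $v$ constant or $|v|\equiv 1$; in the latter case $u$ maps into the equator, so $u_{N+1}\equiv 0$. This eliminates at the outset both the weight $(1-u_{N+1}^2)$ and the term $3|\na u_{N+1}|^2\fhi^2$ that trouble you. Then --- and this is the key point you miss --- the paper does \emph{not} test stability with a bare $\fhi$. It follows Simons literally: set $c:=|Dv|$, take $\xi=v\,c\,\eta$ in the second variation, integrate by parts, and invoke the analogue of Simons' lemma (Lemma~\ref{lem:harm}),
\[
\tfrac12\De c^2-|Dc|^2+c^4\ \ge\ \frac{c^2}{|x|^2}+\frac{c^4}{n-1},
\]
to obtain $\int c^2\{|\na\eta|^2-|x|^{-2}\eta^2\}\ge 0$. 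The choice $\eta\sim r^{-\al}$ then gives $c\equiv 0$ for $n\le 6$, exactly as in the proof of Theorem~\ref{thmcone}.

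Your plan, by contrast, tests with a bare $\fhi=|x|^{-\al}\chi(\si)$. Even granting the reduction to $u_{N+1}\equiv 0$, this only yields $\int_{S^{n-1}} d\,\chi^2\le \tfrac{(n-2)^2}{4}\int\chi^2+\int|D_\si\chi|^2$, i.e.\ nonnegativity of $-\De_{S^{n-1}}-(d-\tfrac{(n-2)^2}{4})$. To conclude you would need that no non-constant harmonic map $\ol u:S^{n-1}\to S^{N-1}$ has energy density $d$ compatible with this spectral bound; your proposed inequality $\int d\ge (n-1)|S^{n-1}|$ does exactly this with $\chi\equiv 1$, but it is not available in general. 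A Poincar\'e argument on each component $\ol u_i$ (which has $\int d\,\ol u_i=0$) only gives $\int d\ge (n-1)(1-|\bar c|^2)|S^{n-1}|$ with $\bar c$ the mean of $\ol u$, and this degenerates as $\ol u$ concentrates near a point. The Simons device --- multiplying the test function by $c$ and using a pointwise Bochner inequality for $c$ --- is precisely what converts this missing global energy bound into a local gain of $c^2/|x|^2$, and it is the step your outline lacks.
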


Now we will show an outline of the proof of Theorem~\ref{thm:reg_harm} following~\cite{GS}. More details can also be found in Section 3 of \cite{CabCapThree}. This theorem gives an alternative proof of one part of the statement of Theorem~\ref{thm:equator}. Namely, that the equator map $u_*$ 
is not minimizing for $3\le n\le 6$.

\begin{sketch}[of Theorem~\ref{thm:reg_harm}]
After stereographic projection (with respect to the south pole)
$P$ from $S^{N}\subset\RR^{N+1}$ to $\RR^N$, for the new
function $v=P\circ u: B_1\subset\RR^n\to\RR^N$,
the energy \eqref{eq:harm_enrg} (up to a constant factor) 
is given by 
\begin{equation*}
E(v):=\int_{B_1}\frac{|Dv|^2}{(1+|v|^2)^2}dx.
\end{equation*}
In addition, we have $|v|\le 1$ since the image of $u$ is contained
in the closed upper hemisphere.

By testing the function
$$
\xi(x)=v(x)\eta(|x|),
$$ 
where 
$\eta$ is a smooth radial function with compact support in $B_1$, in the equation of the first variation of the energy, that is
\begin{equation*}
\delta E(v) \xi=0 ,
\end{equation*}
one can deduce that either $v$ is constant (and 
then the proof is finished) or
$$
|v|\equiv 1 ,
$$
that we assume from now on.

Since $v$ is a minimizer, we have that the second variation of the energy satisfies
$$\delta^2 E(v) (\xi,\xi) \ge 0 .$$
By choosing here the function 
$$
\xi(x)=v(x)|Dv(x)|\eta(|x|),
$$ 
where 
$\eta$ is a smooth radial function with compact support in $B_1$
(to be chosen later), and setting
$$
c(x):=|Dv(x)|,
$$
one can conclude the proof by similar arguments as in the previous section and by using Lemma \ref{lem:harm}, stated next.
\qed
\end{sketch}

\begin{lemma}\label{lem:harm}
If $v$ is a harmonic map, homogeneous of degree
zero, and with $|v|\equiv 1$, we have
\begin{equation*}
\frac{1}{2}\Delta c^2-|Dc|^2 + c^4 \geq \frac{c^2}{|x|^2}+\frac{c^4}{n-1},
\end{equation*}
where $c:=|Dv|$.
\end{lemma}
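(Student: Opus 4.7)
The plan is a Bochner-plus-improved-Kato scheme analogous to the Simons lemma. First I would write $\tfrac{1}{2}\Delta c^{2}=\sum_{\alpha,i}v^{\alpha}_{i}(\Delta v^{\alpha})_{i}+|D^{2}v|^{2}$; substituting the harmonic map equation $\Delta v^{\alpha}=-c^{2}v^{\alpha}$, expanding, and using the pointwise identity $\sum_{\alpha}v^{\alpha}v^{\alpha}_{i}=\tfrac{1}{2}(|v|^{2})_{i}=0$ coming from $|v|\equiv 1$, the first sum collapses to $-c^{4}$. Thus
\begin{equation*}
\tfrac{1}{2}\Delta c^{2}+c^{4}=|D^{2}v|^{2},
\end{equation*}
and the lemma is equivalent to the pointwise bound $|D^{2}v|^{2}-|Dc|^{2}\ge c^{2}/|x|^{2}+c^{4}/(n-1)$.

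Next I would exploit the $0$-homogeneity of $v$. Fix $x_{0}$ and choose an orthonormal frame with $e_{n}=x_{0}/|x_{0}|$ radial. Euler's identity $x^{i}v^{\alpha}_{i}=0$, once differentiated, yields $x^{i}v^{\alpha}_{ij}=-v^{\alpha}_{j}$, which at $x_{0}$ gives $v^{\alpha}_{n}=0$, $v^{\alpha}_{nn}=0$, and $v^{\alpha}_{n\gamma}=-v^{\alpha}_{\gamma}/|x_{0}|$ for $\gamma<n$. Direct computation in this frame produces $|D^{2}v|^{2}=2c^{2}/|x_{0}|^{2}+\sum_{\alpha,\beta,\gamma<n}(v^{\alpha}_{\beta\gamma})^{2}$ and $|Dc|^{2}=c^{2}/|x_{0}|^{2}+c^{-2}\sum_{\beta}(\sum_{\alpha,\gamma}v^{\alpha}_{\gamma}v^{\alpha}_{\beta\gamma})^{2}$, so one copy of $c^{2}/|x_{0}|^{2}$ cancels and the task reduces to the purely tangential estimate
\begin{equation*}
\sum_{\alpha,\beta,\gamma}(v^{\alpha}_{\beta\gamma})^{2}-\tfrac{1}{c^{2}}\sum_{\beta}\Bigl(\sum_{\alpha,\gamma}v^{\alpha}_{\gamma}v^{\alpha}_{\beta\gamma}\Bigr)^{2}\ge \tfrac{c^{4}}{n-1}.
\end{equation*}

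The final step is a trace decomposition. Writing $M^{\alpha}:=(v^{\alpha}_{\beta\gamma})_{\beta,\gamma<n}$, the equation combined with $v^{\alpha}_{nn}=0$ forces $\mathrm{tr}\,M^{\alpha}=-c^{2}v^{\alpha}$; splitting $M^{\alpha}=\widetilde M^{\alpha}-\tfrac{c^{2}v^{\alpha}}{n-1}I$ with $\widetilde M^{\alpha}$ trace-free, orthogonality yields $\sum_{\alpha}\|M^{\alpha}\|^{2}=\sum_{\alpha}\|\widetilde M^{\alpha}\|^{2}+\tfrac{c^{4}}{n-1}$. The constraint $|v|\equiv 1$ in the form $\sum_{\alpha}v^{\alpha}v^{\alpha}_{\beta}=0$ is crucial: it makes the pure-trace piece invisible to $\sum_{\alpha,\gamma}v^{\alpha}_{\gamma}M^{\alpha}_{\beta\gamma}$, so only $\widetilde M^{\alpha}$ enters inside the squared quantity. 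Cauchy--Schwarz then gives $(\sum_{\alpha,\gamma}v^{\alpha}_{\gamma}\widetilde M^{\alpha}_{\beta\gamma})^{2}\le c^{2}\sum_{\alpha,\gamma}(\widetilde M^{\alpha}_{\beta\gamma})^{2}$, which after summation in $\beta$ cancels the $\sum_{\alpha}\|\widetilde M^{\alpha}\|^{2}$ term and leaves precisely $c^{4}/(n-1)$. The main obstacle is exactly this last step: a direct Cauchy--Schwarz on $M^{\alpha}$ would lose the factor $1/(n-1)$, and one must see that the harmonic map equation provides the trace needed for the sharp decomposition while $|v|\equiv 1$ is what kills the cross term between the trace and trace-free pieces.
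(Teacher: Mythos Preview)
Your argument is correct. The paper does not supply its own proof of this lemma; it simply refers to \cite{GS} and remarks that it ``also follows from Bochner identity (see \cite{SU2}).'' Your scheme is exactly that Bochner-identity route: the formula $\tfrac{1}{2}\Delta c^{2}+c^{4}=|D^{2}v|^{2}$, the radial frame exploiting $0$-homogeneity, and the trace/trace-free splitting of the tangential Hessians that produces the sharp $c^{4}/(n-1)$ term. The use of $|v|\equiv1$ (through $\sum_{\alpha}v^{\alpha}v^{\alpha}_{\beta}=0$) to kill the cross term and make the trace piece invisible in the Cauchy--Schwarz step is the right observation, and your Cauchy--Schwarz applied for each fixed $\beta$ with vectors indexed by $(\alpha,\gamma)$ is the correct way to close the estimate.
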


This lemma is the analogue result of Lemma~\ref{lmsimons} for
minimal cones. See \cite{GS} for a proof of the lemma,
which also follows from Bochner identity (see \cite{SU2}).

\subsubsection{Free boundary problems}
Consider the one-phase free boundary problem:
%
\begin{equation}\label{000:eq:freboundpb}
\left\{
\begin{array}{rcll}
\De u &=& 0 & \quad\mbox{in } E \\
u &=& 0  & \quad\mbox{on } \pa E \\
| \na u | &=& 1  & \quad\mbox{on } \pa E \setminus \lbrace 0 \rbrace ,\\
\end{array}\right.
\end{equation}
where $u$ is homogeneous of degree one and positive in the domain $E \subset \RR^n$ and $\pa E$ is a cone.
We are interested in solutions $u$ that are stable for the Alt-Caffarelli energy functional
\begin{equation*}
E_{B_1}(u) = \int_{B_1} \left\lbrace | \na u |^2 + \mathbbm{1}_{ \left\lbrace u> 0 \right\rbrace  } \right\rbrace  dx
\end{equation*}
with respect to compact domain deformations that do not contain the origin. More precisely, we say that $u$ is {\it stable} if for any smooth vector field $\Psi: \RR^n \rightarrow \RR^n$ with $0 \notin {\rm supp} \Psi \subset B_1$ we have
\begin{equation*}
\left.\frac{d^2}{dt^2} E_{B_1} \left(u \left(x + t \Psi(x) \right) \right) \right|_{t=0} \ge 0.
\end{equation*}
The following result due to Jerison and Savin is contained in \cite{JS}, where a detailed proof can be found.

\begin{theorem}[Jerison-Savin \cite{JS}]\label{JSfreebp}
The only stable, homogeneous of degree one, solutions of \eqref{000:eq:freboundpb} in dimension $n\le 4$ are the one-dimensional solutions $u=(x \cdot \nu )^+$, $\nu \in S^{n-1}$.
\end{theorem}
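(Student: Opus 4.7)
The plan is to mirror Simons' strategy, adapting it to the free boundary setting. Since $u$ is homogeneous of degree one, the free boundary $\partial E$ is a cone, and it suffices to show flatness of its link $\Gamma := \partial E \cap S^{n-1}$. The first step is to turn the stability condition on $u$ into a geometric inequality for normal deformations of $\partial E$. For $\xi \in C^1_c(\partial E \setminus \{0\})$, taking $\Psi = \xi \nu$ (with $\nu$ a smooth extension of the outer normal) and computing the second variation of the Alt-Caffarelli energy at $u$ yields an inequality of the form
\begin{equation*}
\int_{\partial E} H \, \xi^2 \, d\mathcal{H}^{n-1} \le \int_E |\nabla w|^2 \, dx,
\end{equation*}
where $H$ is the mean curvature of $\partial E$ and $w$ is the harmonic function in $E$ whose boundary values on $\partial E$ are prescribed by $\xi$ through the linearization of the free boundary condition $|\nabla u| = 1$; see \cite{JS}. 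Unlike Simons' inequality \eqref{eq:1-2v2}, this inequality is non-local on $\partial E$: its right-hand side hides a Dirichlet-to-Neumann operator.

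The second step is to combine this with a Simons/Sternberg-Zumbrun type pointwise identity for the geometric quantities on the cone $\partial E$, producing a control on $H$ in terms of $|x|^{-1}$ and $|A|^2$ (the squared norm of the second fundamental form of $\partial E$). Exploiting the homogeneities ($|A|^2$ is $(-2)$-homogeneous and $H$ is $(-1)$-homogeneous) and separating variables $\xi(r\sigma) = \rho(r)\phi(\sigma)$ together with an explicit bulk extension for $w$, the stability inequality reduces to a Hardy-type spectral inequality on $\Gamma$, in complete analogy with \eqref{eq:1-mcstb}.

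The third step is the cut-off argument in the spirit of the proof of Theorem \ref{thmcone}: taking $\rho(r) = r^{-\alpha}$ for $r \le 1$ and $\rho(r) = r^{-\beta}$ for $r \ge 1$, and using Proposition \ref{Prop:Hardy inequality} together with the reduced spectral inequality on $\Gamma$, one checks that the admissible range of exponents for which the cut-off tails vanish is non-empty precisely when $n \le 4$; this forces $|A|^2 \equiv 0$ on $\Gamma$ and hence $\partial E$ to be a hyperplane, so $u = (x \cdot \nu)^+$ for some $\nu \in S^{n-1}$. The main obstacle, and the reason the critical dimension drops from $7$ to $4$, is the non-local harmonic-extension term: one must construct, for each admissible test function on $\Gamma$, a bulk extension into $E$ whose Dirichlet energy matches sharply the scaling of the boundary term (equivalently, one must understand the lowest Steklov-type eigenvalue on the cone over $\Gamma$). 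This sharp extension estimate is the technical heart of \cite{JS} and has no direct counterpart in the proofs presented earlier in these notes.
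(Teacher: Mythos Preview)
Your outline diverges substantially from the approach the paper attributes to Jerison--Savin, and the divergence is not cosmetic. The paper describes the proof as working with the \emph{bulk} quantity $c^2=\|D^2 u\|^2=\sum_{i,j}u_{ij}^2$, for which one proves an interior Simons-type inequality
\[
\frac{1}{2}\Delta c^2-|\nabla c|^2\ge \frac{2(n-2)}{n-1}\,\frac{c^2}{|x|^2}+\frac{2}{n-1}|\nabla c|^2\quad\text{in }E,
\]
together with a separate boundary inequality for $c_\nu$ coming from the linearized free boundary condition $v_\nu+\mathcal H v=0$. The test function $\xi=c\eta$ is then plugged into stability exactly as in the minimal-cone argument; the non-locality you worry about is absorbed by working with a function defined in all of $E$, not only on $\partial E$. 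Your proposal instead reduces everything to geometric data of $\partial E$ (its second fundamental form $|A|^2$) and a Dirichlet-to-Neumann/Steklov analysis, and you assert that a ``sharp extension estimate'' is ``the technical heart of \cite{JS}''. That is not what the paper reports: the technical heart is the pair of differential inequalities for $c$, interior and boundary.

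There is also a concrete gap at $n=4$. The paper explicitly notes that $c=\|D^2 u\|$ suffices only for $n\le 3$ (this is the Caffarelli--Jerison--Kenig range), and that to reach $n=4$ one must replace $c$ by a more involved, carefully chosen function of the second derivatives of $u$. Your outline contains no analogue of this refinement; a straight Hardy/cut-off count on $\partial E$ with the quantities you list will not close at $n=4$. If you want to keep a boundary-only viewpoint, you would have to show that your Steklov reduction reproduces both the interior gain $\frac{2(n-2)}{n-1}$ and the boundary inequality, and then explain what replaces the improved choice of $c$ in dimension~$4$; none of this is indicated.
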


In dimension $n=3$ this result had been established by Caffarelli, Jerison, and Kenig \cite{CJK}, where they conjectured that it remains true up to dimension $n \le 6$.
On the other hand, in dimension $n=7$, De Silva and Jerison \cite{DJ} provided an example of a nontrivial minimizer.

The proof of Jerison and Savin of Theorem \ref{JSfreebp} is similar to Simons proof of the rigidity of stable minimal cones in low dimensions: they find functions $c$ (now involving the second derivatives of $u$) which satisfy appropriate differential inequalities for the linearized equation. 

Here, the linearized problem is the following:
%
$$
\left\{
\begin{array}{rcll}
\De v &=& 0 & \quad\mbox{in } E \\
v_{\nu} + \cH v &=& 0  & \quad\mbox{on } \pa E \setminus \left\lbrace 0 \right\rbrace . \\
\end{array}\right.
$$
For the function
$$
c^2= \nr D^2 u \nr^2 = \sum_{i,j =1}^n u_{ij}^2 ,
$$
they found the following interior inequality which is similar to the one of the Simons lemma:
$$\frac{1}{2} \De c^2 - |\na c|^2 \geq 2 \, \frac{n-2}{n-1} \, \frac{c^2}{|x|^2}+ \frac{2}{n-1} \, |\na c|^2.$$
In addition, they also need to prove a boundary inequality involving $c_\nu$.
Furthermore, to establish Theorem \ref{JSfreebp} in dimension $n=4$, a more involved function $c$ of the second derivatives of $u$ is needed.

\subsubsection{Nonlocal minimal surfaces}
Nonlocal minimal surfaces, or $\al$-minimal surfaces (where $\al \in (0,1)$), have been introduced in 2010 in the seminal paper of Caffarelli, Roquejoffre, and Savin \cite{CRS}. These surfaces are connected to fractional perimeters and diffusions and, as $\al \nearrow 1$, they converge to classical minimal surfaces.
We refer to the lecture notes \cite{CF} and the survey \cite{DipVal}, where more references can be found.

For $\al$-minimal surfaces and all $\al \in (0,1)$, the analogue of Simons flatness result is only known in dimension $2$ by a result for minimizers of Savin and Valdinoci \cite{SV}.

\section{The Allen-Cahn equation}\label{ALLENCA}
This section concerns the {\it Allen-Cahn equation}
\begin{equation}\label{AlCa}
- \Delta u= u - u^3 \quad \mbox{in $\RR^{n}$}.
\end{equation}
By using equation \eqref{AlCa} and the maximum principle it can be proved that any solution satisfies
$|u| \le 1$. Then, by the strong maximum principle we have that either $|u|<1$  or $u \equiv \pm 1$.
Since $u \equiv \pm 1$ are trivial solutions, from now on we consider $u:\RR^{n}\to (-1,1)$.

We introduce the class of {\it 1-d solutions}: 
$$
u(x)= u_* (x \cdot e) \quad \mbox{ for a vector } \, e \in \RR^n, |e|=1,
$$ 
where
$$
u_* (y)= \tanh\left(\frac{y}{\sqrt{2}} \right).
$$
%
%
%
The solution $u_*$ is sometimes referred to as the {\it layer solution} to \eqref{AlCa};
see Figure \ref{fig:8}.
The fact that $u$ depends only on one variable can be rephrased also by saying that all the level sets
$\{u=s\}$ of $u$ are
hyperplanes.

\begin{figure}[htbp]
\centering
\includegraphics[scale=.25]{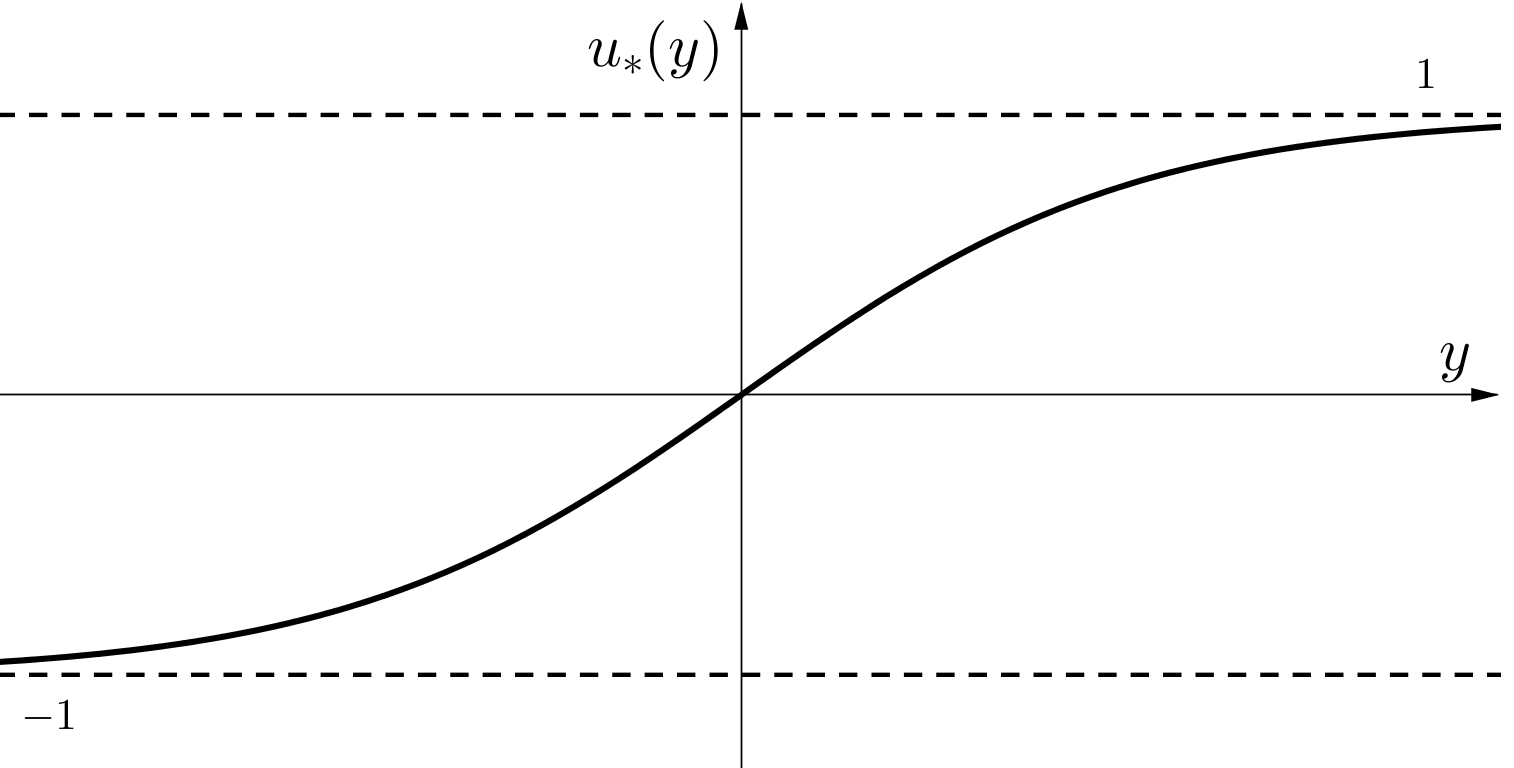}
\caption{The increasing, or layer, solution to the Allen-Cahn equation}
\label{fig:8}       
\end{figure}

\begin{exercise}
Check that the $1$-d functions introduced above are solutions of the Allen-Cahn equation.
\end{exercise}

\begin{remark}\label{rem:1dthenconditions}
Let us take $e= e_n = (0, \dots, 0,1)$ and consider the 1-d solution
$u(x)=u_* (x_n) = \tanh ( x_n / \sqrt{2} )$. It is clear that the following two relations hold:
\begin{equation}\label{monoton}
u_{x_n} >0 \quad \mbox{in }\RR^{n} ,
\end{equation}

\begin{equation}\label{limiting}
\lim_{x_n\rightarrow\pm\infty}u(x', x_n)=\pm 1
\quad \text{ for all }x'\in \RR^{n-1}.
 \end{equation}

\end{remark}

The energy functional associated to equation \eqref{AlCa} is
\begin{equation*}
E_\Om (u):=\int_\Omega \left\{ \frac{1}{2} |\nabla u|^2
+ G(u)\right\} dx ,
\end{equation*}
where $G$ is the double-well potential in Figure \ref{fig:7}:
\begin{equation*}
G(u)= \frac{1}{4} \left( 1- u^2 \right)^2. 
\end{equation*}

\begin{definition}[Minimizer]\label{definit:MinimizAC}
A function  $u:\RR^{n}\to (-1,1)$ is said to be a {\it minimizer} of \eqref{AlCa}
when
$$E_{B_R} (u) \leq E_{B_R} (v)$$
for every open ball $B_R$ and functions $v: \ol{B_R}\to \RR$ such that $v\equiv u$ on 
$\partial B_R$.
\end{definition}

\begin{figure}[htbp]
\centering
\includegraphics[scale=.25]{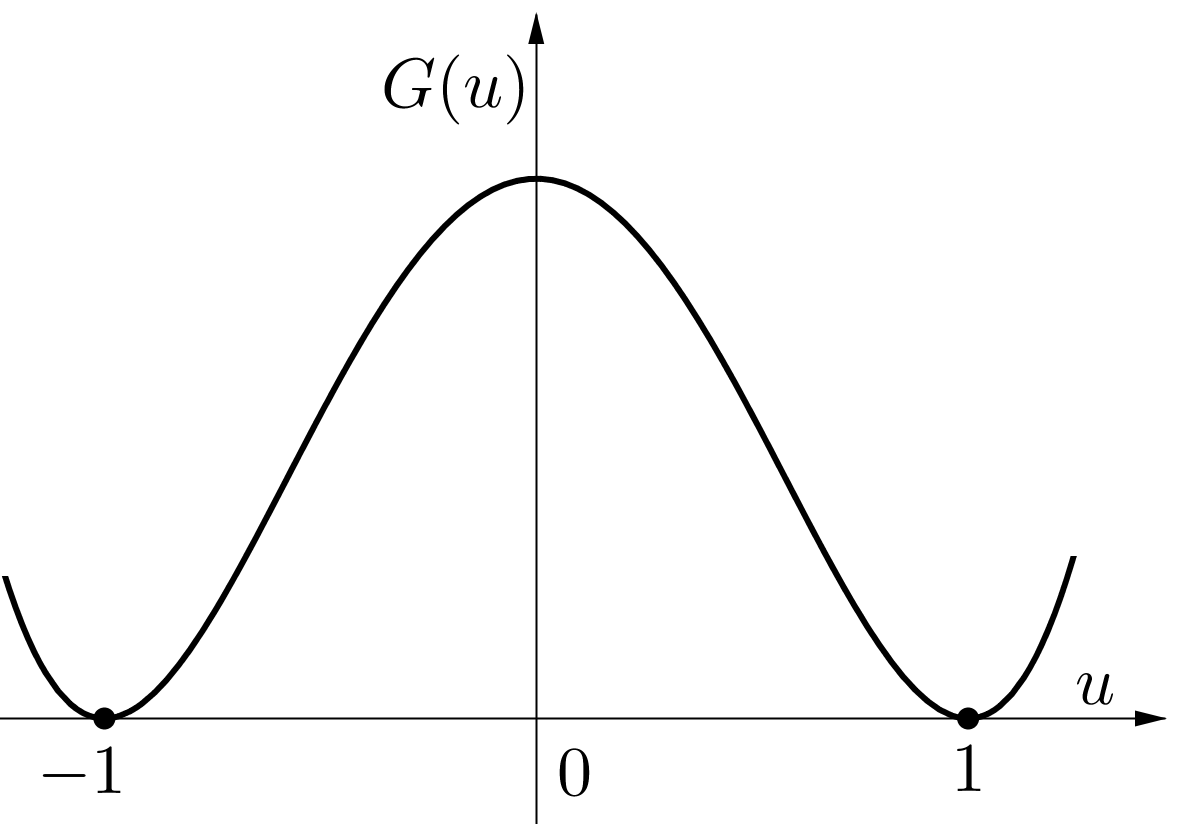}
\caption{The double-well potential in the Allen-Cahn energy}
\label{fig:7}       
\end{figure}

\smallskip
\noindent
{\bf Connection with the theory of minimal surfaces.}
The Allen-Cahn equation has its origin in the theory of phase transitions and it is used as a model for some nonlinear reaction-diffusion processes.
To better understand this, let $\Om\subset \RR^n$ be a bounded domain, and consider the Allen-Cahn equation with parameter $\ve > 0$,
\begin{equation}\label{eq:Allen-Cahnwithparameter}
- \ve^2 \De u= u - u^3 \, \mbox{ in } \, \Om ,
\end{equation}
with associated energy functional given by
\begin{equation}\label{eq:energyACwithparameter}
E_{\ve}(u)= \int_{\Om} \left\lbrace \frac{\ve}{2} \, | \na u|^2 + \frac{1}{\ve} \, G(u) \right\rbrace dx .
\end{equation}
Assume now that there are two populations (or chemical states) A and B and that $u$ is a density measuring the percentage of the two populations at every point: if $u(x) = 1$ (respectively, $u(x)= -1$) at a point $x$, we have only population A at $x$ (respectively, population B); $u(x)=0$ means that at $x$ we have $50\%$ of population A and $50\%$ of population $B$.

By \eqref{eq:energyACwithparameter}, it is clear that in order to minimize $E_{\ve}$ as $\ve$ tends to $0$, $G(u)$ must be very small. From Figure \ref{fig:7} we see that this happens when $u$ is close to $\pm 1$. These heuristics are indeed formally confirmed by a celebrated theorem of Modica and Mortola. It states that, if $u_\ve$ is a family of minimizers of $E_\ve$,
then, up to a subsequence, $u_\ve$ converges in $L^1_{\mathrm{loc}} (\Om)$, as $\ve$ tends to $0$,
to
$$
u_0 = \mathbbm{1}_{ \Om_+  } - \mathbbm{1}_{ \Om_-  } 
$$
for some disjoint sets $\Om_{\pm}$ having as common boundary a surface $\Ga$. In addition, $\Ga$ is a minimizing minimal surface.
Therefore, the result of Modica-Mortola establishes that the two populations tend to their total separation, and in such a (clever) way that the interface surface $\Ga$ of separation has least possible area.

Finally, notice that the $1$-d solution of \eqref{eq:Allen-Cahnwithparameter},
$$
x \mapsto u_* (\frac{x \cdot e}{\ve}) ,
$$
makes a very fast transition from $-1$ to $1$ in a scale of order $\ve$. Accordingly, in Figure \ref{fig:add1}, $u_\ve$ will make this type of fast transition across the limiting minimizing minimal surface $\Ga$.   
The interested reader can see \cite{AAC} for more details.

\smallskip

\begin{figure}[htbp]
\centering
\includegraphics[scale=.25]{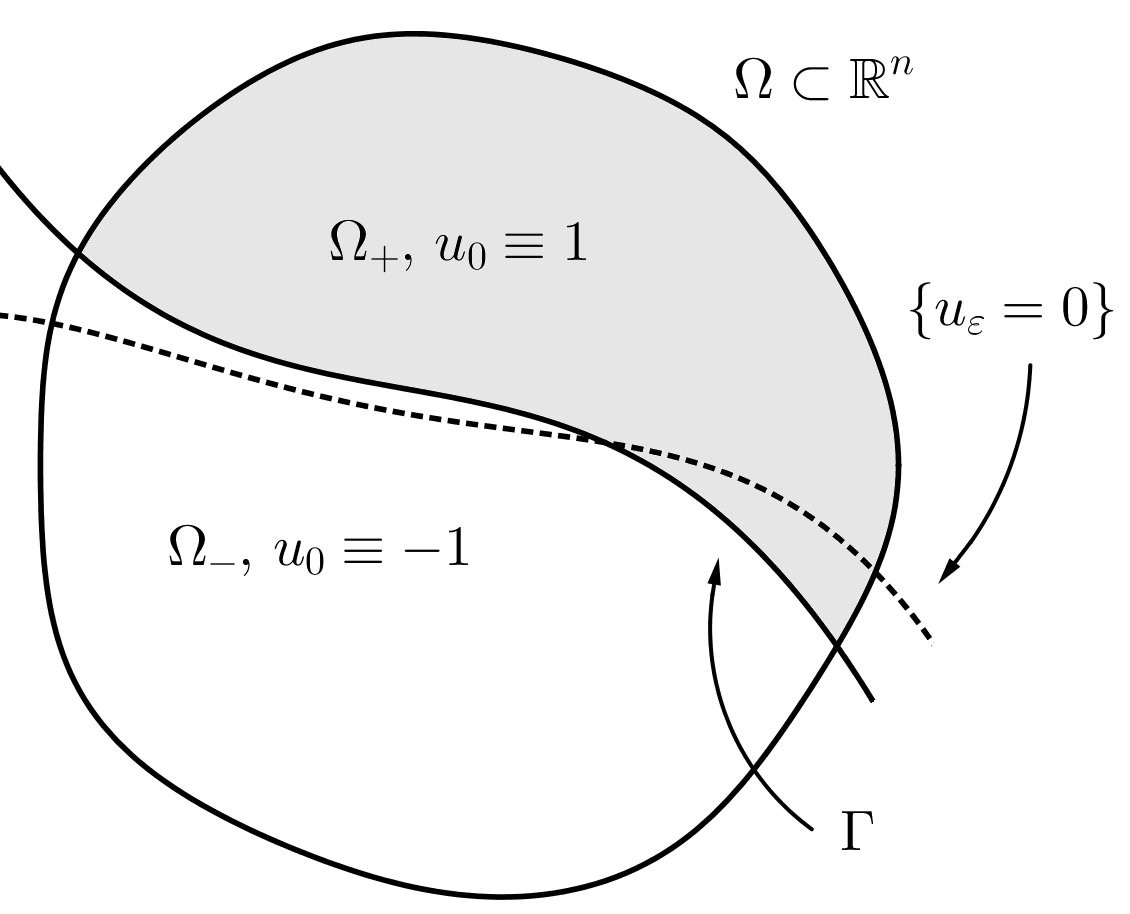}
\caption{The zero level set of $u_\ve$, the limiting function $u_0$, and the minimal surface $\Ga$}
\label{fig:add1}       
\end{figure}

\subsection{Minimality of monotone solutions with limits $\pm 1$}

The following fundamental result shows that monotone solutions with limits $\pm 1$ are minimizers (as in Definition \ref{definit:MinimizAC}).

\begin{theorem}[\textbf{Alberti-Ambrosio-Cabr\'e \cite{AAC}}]\label{alba}
Suppose that $u$ is a solution of \eqref{AlCa} satisfying the monotonicity hypothesis \eqref{monoton} and the condition \eqref{limiting} on limits.
Then, $u$ is a minimizer of \eqref{AlCa} in $\RR^n$.
\end{theorem}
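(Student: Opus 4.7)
The plan is to employ the foliation (or sliding) argument that the excerpt announces immediately before the theorem, adapting the alternative proof of Theorem~\ref{thm:SimCone} to the graph/Allen--Cahn setting. Define the family of translates $u^{t}(x):=u(x+te_{n})$ for $t\in\RR$. Each $u^{t}$ is again a solution of \eqref{AlCa}; by \eqref{monoton} the map $t\mapsto u^{t}(x)$ is strictly increasing for every $x$; and by \eqref{limiting}, $u^{t}\to\pm 1$ locally uniformly in $x$ as $t\to\pm\infty$. Thus the graphs of the $u^{t}$'s foliate the slab $\RR^{n}\times(-1,1)$. The strategy is to slide these leaves against an arbitrary competitor and force it to coincide with $u=u^{0}$ via the strong maximum principle.

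Fix $B_{R}$. It suffices to prove $E_{B_{R}}(u)\le E_{B_{R}}(v)$ when $v$ is itself a minimizer of $E_{B_{R}}$ among functions with $v=u$ on $\partial B_{R}$; such $v$ exists by the direct method (truncation at $\pm 1$ does not raise the energy, since $G\ge 0$ and $G(\pm 1)=0$), solves \eqref{AlCa} by its Euler--Lagrange equation, is smooth by elliptic regularity, and satisfies $-1<v<1$ in $\ol{B_{R}}$ by the strong maximum principle (since its boundary datum does). I will show $v\equiv u$, which gives $E_{B_{R}}(u)=E_{B_{R}}(v)\le E_{B_{R}}(w)$ for every admissible $w$. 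Consider
\[
T^{+}:=\{\,t\in\RR : u^{t}\ge v \text{ in } \ol{B_{R}}\,\}.
\]
Since $v\le 1-\ve$ on $\ol{B_{R}}$ for some $\ve>0$ and $u^{t}\to 1$ uniformly on $\ol{B_{R}}$ as $t\to+\infty$, all sufficiently large $t$ lie in $T^{+}$. For $t<0$, strict monotonicity in $x_{n}$ gives $u^{t}<u=v$ on $\partial B_{R}$, so $T^{+}\subset[0,\infty)$. Set $t^{+}:=\inf T^{+}\ge 0$; by continuity $u^{t^{+}}\ge v$ in $\ol{B_{R}}$, with equality at some point $x_{0}\in\ol{B_{R}}$.

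If $t^{+}>0$, then $u^{t^{+}}>v$ strictly on $\partial B_{R}$, so the touching point $x_{0}$ is interior. The nonnegative difference $w:=u^{t^{+}}-v$ satisfies the linear equation
\[
-\Delta w = w\bigl(1-(u^{t^{+}})^{2}-u^{t^{+}}v-v^{2}\bigr)
\]
with bounded coefficient; the strong maximum principle then forces $w\equiv 0$ in $B_{R}$, contradicting $w>0$ on $\partial B_{R}$. Hence $t^{+}=0$, i.e.\ $u\ge v$ in $B_{R}$. The symmetric sliding from below, using $T^{-}:=\{t\in\RR:u^{t}\le v\text{ in }\ol{B_{R}}\}$ and $t^{-}:=\sup T^{-}\le 0$, yields $u\le v$ in $B_{R}$. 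Therefore $u\equiv v$, and so $u$ is a minimizer in the sense of Definition~\ref{definit:MinimizAC}. The main delicate point is the touching step: one needs $v$ regular enough for the strong maximum principle to apply (supplied by elliptic regularity), and one needs the strict bound $|v|<1$ on $\ol{B_{R}}$ so that $T^{\pm}$ are nonempty and the leaves $u^{t}$ with $t$ near $\pm\infty$ genuinely overshoot and undershoot $v$; both facts follow from the maximum principle applied to $v$.
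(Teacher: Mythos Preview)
Your proof is correct and follows essentially the same foliation-and-sliding argument as the paper: you build the same translates $u^{t}$, take a minimizer $v$ in $B_{R}$ with boundary datum $u$, and use the strong maximum principle at an interior contact point to force $v\equiv u$. The only differences are cosmetic---you organize the sliding via the sets $T^{\pm}$ and their extrema rather than by contradiction, and you write out explicitly the linear equation satisfied by $u^{t^{+}}-v$ (which the paper leaves as an exercise).
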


See \cite{AAC} for the original proof of the Theorem~\ref{alba}. It uses a calibration built from a foliation and avoids the use of the strong maximum principle, but it is slightly involved. Instead, the simple proof that we give here was suggested to the first author (after one of his lectures on \cite{AAC}) by  L.~Caffarelli.
It uses a simple foliation argument together with the strong maximum principle, as in the alternative proof of Theorem \ref{thm:SimCone} given in Subsection \ref{subsec 1.1:MinimalitySimcones}.

\begin{proof}[of Theorem \ref{alba}]
Denoting $x= (x' ,x_n) \in \RR^{n-1} \times \RR$, let us consider the functions
$$
u^t (x):= u(x', x_n +t), \, \mbox{ for } \, t\in\RR.
$$
By the monotonicity assumption \eqref{monoton} we have that
\begin{equation}\label{eq:monfoliation}
u^t < u^{t'} \, \mbox{ in } \RR^n , \, \mbox{ if } t<t'.
\end{equation}
Thus, by \eqref{limiting} we have that the graphs of $u^t= u^t (x)$, $t \in \RR$,
form a foliation filling all of $\RR^n \times (-1,1)$.
Moreover, we have that for every $t \in \RR$, $u^t$ are solutions of $-\De u^t =u^t - (u^t)^3$ in $\RR^n$.

By simple arguments of the Calculus of Variations, given a ball $B_R$ it can be proved that there exists
a minimizer $v: \ol{B}_R \to \RR$ of $E_{B_R}$ such that $v=u$ on $\pa B_R$. In particular, $v$ satisfies
%
%
%
\begin{equation*}
\left\{
\begin{array}{rcll}
- \De v &=& v - v^3 & \quad\mbox{in } B_R \\
|v| &<& 1  & \quad\mbox{in } \ol{B}_R \\
v &=& u  & \quad\mbox{on } \pa B_R .\\
\end{array}\right.
\end{equation*}

By \eqref{limiting}, we have that the graph of $u^t$ in the compact set $\ol{B}_R$ is above the graph of $v$ for $t$ large enough, and it is below the graph of $v$ for $t$ negative enough
(see Figure~\ref{fig:9}).
If $v \not\equiv u$, assume that $v<u$ at some point in $B_R$ (the situation $v>u$ somewhere in $B_R$ is done similarly). It follows that, starting from $t= - \infty$, there will exist a first $t_* <0$
such that $u^{t_*}$ touches $v$ at a point $P \in \ol{B}_R$. This means that
$u^{t_*} \le v$ in $\ol{B}_R$ and $u^{t_*} (P) = v(P)$.

\begin{figure}[htbp]
\centering
\includegraphics[scale=.25]{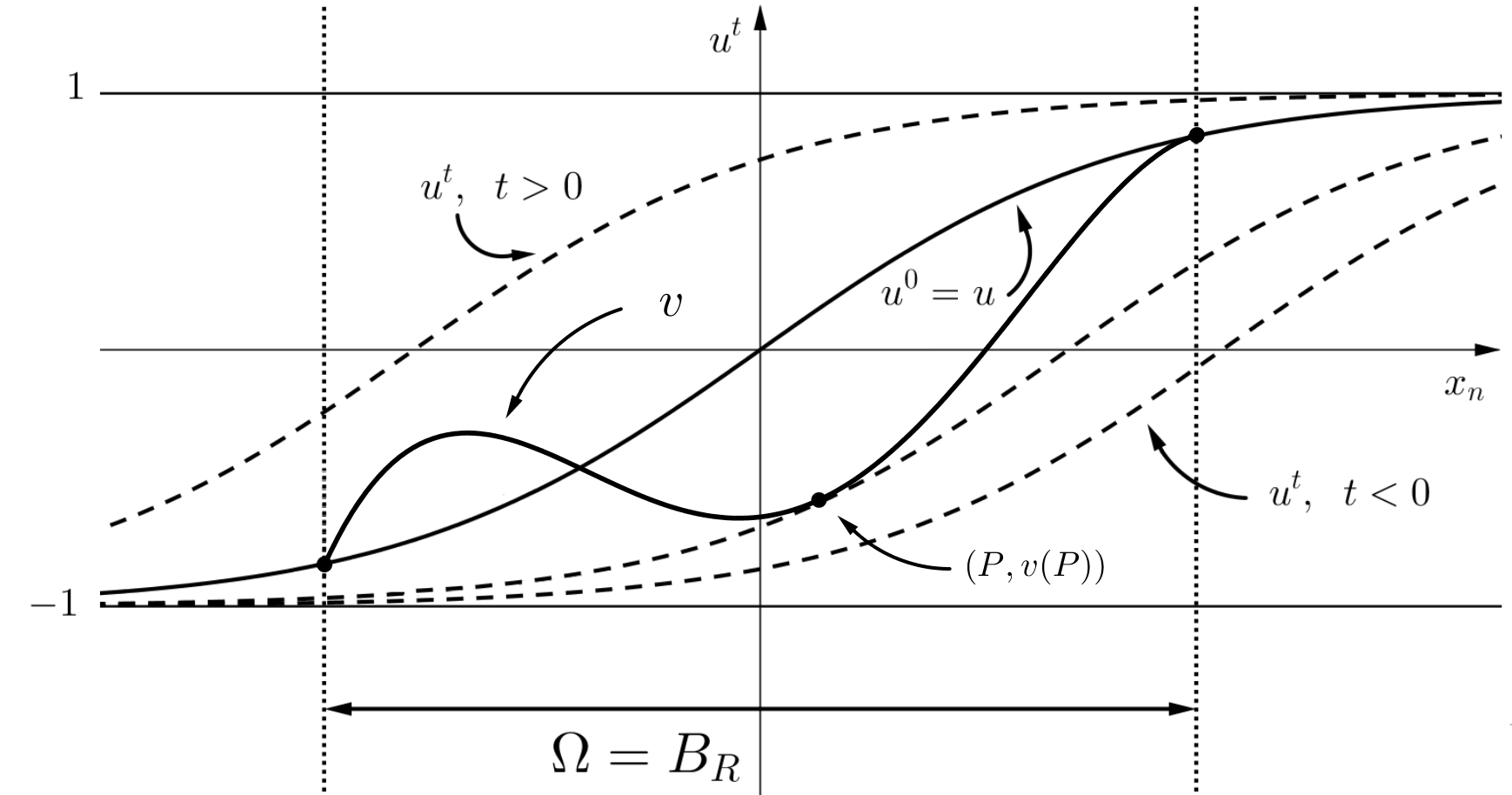}
\caption{The foliation $\lbrace u^t \rbrace$ and the minimizer $v$}
\label{fig:9}       
\end{figure}

By \eqref{eq:monfoliation}, $t_* <0$, and the fact that $v=u = u^0$ on $\pa B_R$, the point $P$ cannot belong to
$\pa B_R$. Thus, $P$ will be an interior point of $B_R$.

But then we have that $u^{t_*}$ and $v$ are two solutions of the same semilinear equation (the Allen-Cahn equation), the graph of $u^{t_*}$ stays below that of $v$, and they touch each other at the interior point $\left( P, v(P) \right) $. This is a contradiction with the strong maximum principle.

Here we leave as an exercise (stated next) to verify that the difference of two solutions of $- \De u = f(u)$ satisfies a linear elliptic equation to which we can apply the strong maximum principle.
This leads to $u^{t_*} \equiv v$, which contradicts $u^{t_*} < v = u^0$ on $\pa B_R$.
\qed
\end{proof}

\begin{exercise}
Prove that the difference $w:= v_1 - v_2$ of two solutions of a semilinear equation $- \De v= f(v)$, where $f$ is a Lipschitz function, satisfies a linear equation of the form $\De w + c(x) \, w = 0$, for some function $c \in L^{\infty}$. Verify that, as a consequence, this leads to $u^{t_*} \equiv v$ in the previous proof.
\end{exercise}

By recalling Remark \ref{rem:1dthenconditions}, we immediately get the following corollary.
\begin{corollary}
The 1-d solution $u(x)=u_* (x \cdot e)$ is a minimizer of \eqref{AlCa} in $\RR^n$, for every unit vector $e \in \RR^n$. 
\end{corollary}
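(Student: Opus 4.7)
The plan is to deduce the corollary directly from Theorem \ref{alba}, using Remark \ref{rem:1dthenconditions} to check the hypotheses, with rotational invariance as the only extra ingredient.

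First, I would reduce to the case $e = e_n$. Both the Allen-Cahn equation \eqref{AlCa} and the energy $E_{B_R}$ are invariant under orthogonal transformations of $\RR^n$: the Laplacian commutes with rotations, the nonlinearity $u - u^3$ is pointwise, and for $R \in O(n)$ one has $|\nabla(u\circ R)|^2 = |(\nabla u)\circ R|^2$ together with $R(B_R) = B_R$ and $R(\partial B_R) = \partial B_R$, so admissible competitors transform into admissible competitors. Consequently $u$ is a minimizer in the sense of Definition \ref{definit:MinimizAC} if and only if $u \circ R$ is. Picking $R$ with $R e_n = e$, it therefore suffices to prove the corollary for the specific function $x \mapsto u_*(x_n)$.

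Next, I would verify the two hypotheses of Theorem \ref{alba} for this function. That $u_*$ solves the one-dimensional Allen-Cahn equation is the content of the exercise stated right after the definition of $u_*$: a direct ODE check using $u_*'(y) = (1 - u_*^2(y))/\sqrt{2}$ gives $-u_*'' = u_* - u_*^3$, and hence $x \mapsto u_*(x_n)$ solves \eqref{AlCa} in $\RR^n$. Remark \ref{rem:1dthenconditions} explicitly records that this same function satisfies both the strict monotonicity condition \eqref{monoton} (since $u_*' > 0$) and the limit condition \eqref{limiting} (since $\tanh$ has limits $\pm 1$). Theorem \ref{alba} then applies and yields the minimality of $x \mapsto u_*(x_n)$, which by the rotation step gives the minimality of $u_*(x \cdot e)$ for arbitrary $e \in S^{n-1}$.

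There is no genuine obstacle here: the statement is essentially a direct specialization of Theorem \ref{alba}. The only mild point worth making explicit in the writeup is the rotational invariance of Definition \ref{definit:MinimizAC}, which is routine but should not be skipped.
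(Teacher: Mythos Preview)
Your proposal is correct and follows the same approach as the paper: the corollary is deduced immediately from Theorem \ref{alba} via Remark \ref{rem:1dthenconditions}, and the paper simply states this in one line before the corollary. Your explicit treatment of the rotational invariance to pass from $e=e_n$ to a general unit vector $e$ is a detail the paper leaves implicit, but it is a welcome clarification rather than a different method.
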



As a corollary of Theorem \ref{alba}, we easily deduce the following important energy estimates.

\begin{corollary}[Energy upper bounds; \textbf{Ambrosio-Cabr\'e \cite{ambcab}} ]\label{tupper}
Let $u$ be a solution of \eqref{AlCa} satisfying
\eqref{monoton} and \eqref{limiting} (or more generally, let $u$ be a minimizer in $\RR^n$).

Then, for all $R \ge 1$ we have
\begin{equation}\label{upperbou}
E_{B_R} (u) \leq C R^{n-1}
\end{equation}
for some constant $C$ independent of $R$.
In particular, since $G \ge 0$, we have that
$$
\int_{B_R} |\nabla u|^2 \,dx
\leq C R^{n-1} 
$$
for all $R \ge 1$.
\end{corollary}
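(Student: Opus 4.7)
The strategy is to exploit minimality with an explicit cheap competitor on $B_R$. Since $u$ is a minimizer by hypothesis (or, in the monotone case, by Theorem \ref{alba}), it suffices to exhibit, for each $R\ge 1$, a function $v:\overline{B}_R\to\RR$ with $v=u$ on $\partial B_R$ and $E_{B_R}(v)\le CR^{n-1}$; the estimate \eqref{upperbou} then follows at once from Definition \ref{definit:MinimizAC}, and the $H^1$ bound is immediate since $G\ge 0$.

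The natural choice is a competitor that is identically $1$ deep inside $B_R$ and matches $u$ in a thin transition layer of width $1$ near $\partial B_R$. Concretely, I would take a Lipschitz cutoff $\lambda:[0,R]\to[0,1]$ with $\lambda\equiv 0$ on $[0,R-1]$, $\lambda(R)=1$, $|\lambda'|\le 2$, and set
$$
v(x) := 1 + \lambda(|x|)\bigl(u(x)-1\bigr).
$$
Then $v\equiv 1$ on $B_{R-1}$ (so $\tfrac12|\nabla v|^2+G(v)\equiv 0$ there), $v=u$ on $\partial B_R$ (so $v$ is admissible), and on the annulus $A:=B_R\setminus B_{R-1}$ the convex-combination structure gives $|v|\le 1$, hence $G(v)\le C$, together with
$$
|\nabla v| \;\le\; |\lambda'(|x|)|\,|u-1| + \lambda\,|\nabla u| \;\le\; 4 + |\nabla u|.
$$
Since $|A|\le CR^{n-1}$ for $R\ge 1$, it remains only to control $|\nabla u|$ on $A$.

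For this I would invoke the standard a priori gradient bound: every solution of \eqref{AlCa} with $|u|\le 1$ satisfies $\|\nabla u\|_{L^\infty(\RR^n)}\le C$, with $C$ depending only on $n$. This follows from interior $W^{2,p}$ and $C^{1,\alpha}$ estimates applied on unit balls centered at arbitrary points, using that the right-hand side $u-u^3$ is uniformly bounded. Combining with the previous display yields $E_{B_R}(v)=E_A(v)\le CR^{n-1}$, and minimality closes the argument. The only (mild) obstacle is the gradient bound itself; one could instead, in the monotone setting \eqref{monoton}--\eqref{limiting}, interpolate between $u$ and the one-dimensional layer $u_*(x_n)$ (whose energy on $B_R$ is $\le CR^{n-1}$ by Fubini, because the $1$-d energy density is integrable on $\RR$), but the same uniform control of $|\nabla u|$ on $A$ would still be required to handle the transition, so no essential simplification results.
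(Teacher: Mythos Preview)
Your proposal is correct and follows essentially the same approach as the paper: both construct the competitor $v=(1-\phi)\cdot 1+\phi\cdot u$ (with a cutoff supported in the unit annulus $B_R\setminus B_{R-1}$), reduce the energy estimate to that annulus, and invoke the uniform bound $\|\nabla u\|_{L^\infty(\RR^n)}\le C$ for bounded solutions of \eqref{AlCa}. The paper likewise leaves this last gradient bound as an exercise via interior elliptic estimates, exactly as you outline.
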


\begin{remark}
The proof of Corollary \ref{tupper} is trivial for $1$-d solutions. Indeed, it is easy to check that $\int_{- \infty}^{+ \infty} \left\lbrace \frac{1}{2} (u_* ')^2 + \frac{1}{4}(1- u_*^2)^2 \right\rbrace dy < \infty$ and, as a consequence, by applying Fubini's theorem on a cube larger than $B_R$, that \eqref{upperbou} holds.
This argument also shows that the exponent $n-1$ in \eqref{upperbou} is optimal (since it cannot be improved for $1$-d solutions).
\end{remark}

The estimates in Corollary \ref{tupper} are fundamental in the proofs of a conjecture of De Giorgi that we treat in the next subsection.

The estimate \eqref{upperbou} was first proved by Ambrosio and the first author in \cite{ambcab}. Later on, in \cite{AAC} Alberti, Ambrosio, and the first author discovered that monotone solutions with limits are minimizers (Theorem \ref{alba} above). This allowed to simplify the original proof of the energy estimates found in \cite{ambcab}, as follows.

\begin{proof}[of Corollary \ref{tupper}]
Since $u$ is a minimizer by Theorem \ref{alba} (or by hypothesis), we can
perform a simple energy comparison argument. Indeed, let
$\phi_R\in C^\infty(\RR^{n})$ satisfy $0\le\phi_R\le 1$ in $\RR^{n}$,
$\phi_R\equiv 1$ in $B_{R-1}$,
$\phi_R\equiv 0$ in $\RR^{n}\setminus B_R$, and 
$\Vert\nabla\phi_R\Vert_\infty\leq 2$. Consider  
$$
v_R:=(1-\phi_R) u+\phi_R.
$$

Since $v_R \equiv u$ on $\pa B_R$, we can compare
the energy of $u$ in $B_R$ with that of $v_R$. 
We obtain
\begin{multline*}
\int_{B_R}\left\{\frac{1}{2} |\nabla u|^2+ G(u) \right\}dx \leq 
\int_{B_R}\left\{\frac{1}{2}|\nabla v_R|^2+ G(v_R) \right\}dx 
\\ =\int_{B_R\setminus B_{R-1}}
\left\{\frac{1}{2}|\nabla v_R|^2+ G(v_R) \right\}dx \leq 
C|B_R\setminus B_{R-1}| \le  CR^{n-1}
\end{multline*}
for every $R \ge 1$, with $C$ independent of $R$. In the second inequality of the chain above we used that $\frac{1}{2}|\nabla v_R|^2+ G(v_R) \le C$ in $B_R\setminus B_{R-1}$ for some constant $C$ independent
of~$R$. This is a consequence of the following exercise.
\qed

\end{proof}

\begin{exercise}
Prove that if $u$ is a solution of a semilinear equation $- \De u = f(u)$ in $\RR^n$ and $|u| \le 1$ in $\RR^n$, where $f$ is a continuous nonlinearity, then $|u| + | \na u| \le C$ in $\RR^n$ for some constant $C$ depending only on $n$ and $f$. See \cite{ambcab}, if necessary, for a proof.
\end{exercise}

\subsection{A conjecture of De Giorgi}

In 1978, E. De Giorgi \cite{DG} stated the following conjecture:

\smallskip\noindent
{\em 
{\rm {\bf Conjecture (DG).}}  
Let $u:\RR^{n}\to (-1,1)$ be a solution of the Allen-Cahn equation \eqref{AlCa}
satisfying the monotonicity condition \eqref{monoton}.
Then, $u$ is a $1$-d solution (or equivalently, all level sets $\{u=s\}$ of $u$ are
hyperplanes), at least if $n\leq 8$.
}

\smallskip

This conjecture was proved in 1997 for $n=2$ by Ghoussoub and Gui \cite{GG}, and in 2000 for $n=3$ by
Ambrosio and Cabr\'e \cite{ambcab}.
Next we state a deep result of Savin \cite{Sa} under the only assumption of minimality. This is the semilinear analogue of Simons Theorem \ref{thmcone} and Remark \ref{remarkchemiserve:1.17} on minimal surfaces. As we will see, Savin's result leads to a proof of Conjecture (DG) for $n \le 8$ if the additional condition \eqref{limiting} on limits is assumed.

\begin{theorem}[\textbf{Savin \cite{Sa}}]\label{teosavin}
Assume that $n\leq 7$ and that $u$ is a minimizer of \eqref{AlCa} in~$\RR^n$. 
Then, $u$ is a $1$-d solution.
\end{theorem}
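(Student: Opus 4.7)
The plan is to combine three ingredients: (i) the energy upper bound of Corollary \ref{tupper}, which lets us pass to a blow-down; (ii) the Modica--Mortola $\Gamma$-convergence sketched in the introduction to this section, which turns the blow-down into a minimizing minimal surface; and (iii) Simons' theorem (Theorem \ref{thmcone}) together with the flatness result mentioned in Remark \ref{remarkchemiserve:1.17}, which forces that minimal surface to be a hyperplane in dimensions $n\le 7$. The delicate point, supplying the bulk of Savin's work, is a quantitative \emph{improvement of flatness} theorem which transfers the asymptotic one-sided flatness back to each finite scale.

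First, I would set, for each $R\ge 1$, the rescaled function $u_R(x):=u(Rx)$. A change of variables shows that $u_R$ is a minimizer in $B_1$ of the Modica--Mortola functional $E_{\varepsilon_R}$ defined in \eqref{eq:energyACwithparameter} with $\varepsilon_R=1/R$. By Corollary \ref{tupper}, $E_{\varepsilon_R}(u_R;B_1)\le C$ uniformly in $R$. A standard compactness result for these functionals (Modica--Mortola plus the regularity theory of minimizing minimal surfaces) yields a subsequence $R_j\to\infty$ such that $u_{R_j}\to u_0=\mathbbm{1}_{E_\infty}-\mathbbm{1}_{E_\infty^c}$ in $L^1_{\mathrm{loc}}(\RR^n)$, where $\partial E_\infty$ is a minimizing minimal surface in $\RR^n$ in the sense of Definition \ref{def:minimal set} (extended to sets of finite perimeter). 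In particular, performing a further blow-down of $\partial E_\infty$ produces a minimizing minimal cone $\cC\subset\RR^n$. By Simons' theorem (Theorem \ref{thmcone}) applied to $\cC$ and the blow-up procedure described in Remark \ref{remarkchemiserve:1.17}, in the range $n\le 7$ the cone $\cC$ is a hyperplane, and hence $\partial E_\infty$ itself is a hyperplane.

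At this stage one knows only something \emph{asymptotic}: the level sets $\{u=s\}$ get, on sequences of larger and larger scales, increasingly trapped between parallel hyperplanes. To upgrade this into rigidity, the hard step is Savin's \emph{improvement of flatness} theorem for minimizers of \eqref{AlCa}: there exist universal constants $\theta\in(0,1)$ and $\eta_0>0$ such that, if $u$ is a minimizer and $\{u=0\}\cap B_L$ is contained in a slab $\{|x\cdot e|\le \eta L\}$ with $\eta\le\eta_0$ and $L\ge L_0(\eta)$, then there exists a unit vector $e'$ with $\{u=0\}\cap B_{L/2}\subset\{|x\cdot e'|\le \frac{\theta}{2}\eta L\}$. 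The proof of this statement is the real work: it is obtained by a compactness/contradiction argument which, after rescaling the would-be-flat level set to a graph $x_n=\varphi(x')$ and linearizing the Allen--Cahn equation about the 1-d profile $u_*$, shows that the limiting graph $\varphi$ must solve a second-order linear elliptic equation to which Harnack and $C^{1,\alpha}$ estimates apply. The main obstacle is precisely this compactness/linearization step, since one must carefully control the passage from the nonlinear flat level set to a harmonic-type profile while using the minimality assumption in an essential way to rule out negative modes.

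With the improvement of flatness in hand, the proof closes quickly. The blow-down step above provides, for each $\eta>0$, some scale $R_\eta$ at which $\{u=0\}\cap B_{R_\eta}$ lies in a slab of width $\eta R_\eta$ around a hyperplane $\{x\cdot e_\eta=0\}$. Iterating the flatness improvement from this scale inwards gives that $\{u=0\}\cap B_r$ lies in slabs of widths $\theta^k \eta R_\eta$ at dyadic scales $r=2^{-k}R_\eta$. Sending $\eta\to 0$, the slopes $e_\eta$ converge (by their uniform-in-scale estimate) to a fixed unit vector $e$, and $\{u=0\}$ becomes the hyperplane $\{x\cdot e=0\}$. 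A standard sliding argument using the strong maximum principle, exactly as in the alternative proof of Theorem \ref{thm:SimCone} and in the proof of Theorem \ref{alba}, then compares $u$ with the 1-d layer $u_*(x\cdot e)$ on horizontal translates and forces $u(x)=u_*(x\cdot e)$, proving that $u$ is one-dimensional.
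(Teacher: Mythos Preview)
The paper does not actually prove Theorem~\ref{teosavin}; it only records in one sentence that Savin's argument rests on an improvement-of-flatness result for Allen--Cahn minimizers together with Simons' Theorem~\ref{thmcone} (via Remark~\ref{remarkchemiserve:1.17}). Your outline is entirely consistent with that description and, in fact, supplies considerably more structure: the blow-down via Corollary~\ref{tupper} and Modica--Mortola, the appeal to Simons to flatten the limiting surface when $n\le 7$, the improvement-of-flatness iteration, and the final sliding comparison with $u_*$ are exactly the pieces of Savin's proof, in the right order.

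A couple of places where your sketch is looser than it looks, though not wrong in approach. First, the inward iteration ``at dyadic scales $r=2^{-k}R_\eta$'' cannot be run indefinitely: Savin's improvement of flatness carries a lower bound on the scale (the slab must stay thick relative to the transition width of $u_*$), so the iteration halts at some finite scale, and one must invoke translation invariance of the problem to propagate the conclusion to every point of $\{u=0\}$. Second, the passage ``the slopes $e_\eta$ converge \ldots\ and $\{u=0\}$ becomes the hyperplane'' compresses a nontrivial step; in Savin's paper the iteration yields closeness of $u$ to a one-dimensional profile at unit scale, after which a separate rigidity/sliding argument gives exact one-dimensionality. These are refinements rather than objections: given that the paper itself offers only a one-line summary, your proposal is a correct and faithful expansion of it.
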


The hypothesis $n\leq 7$ on its statement is sharp.
Indeed, in 2017 Liu, Wang, and Wei \cite{LWW} have shown the existence of a minimizer in $\RR^8$ whose level sets are not hyperplanes. Its zero level set is asymptotic at infinity to the Simons cone. However, a canonical solution described in Subsection \ref{subsec:saddlesha} (and whose zero level set is exactly the Simons cone) is still not known to be a minimizer in $\RR^8$.

Note that Theorem \ref{teosavin} makes no assumptions on the monotonicity or
the limits at infinity of the solution. To prove Conjecture (DG) using Savin's result
(Theorem~ \ref{teosavin}), one needs to make the further assumption \eqref{limiting} on the limits only to guarantee, by Theorem~\ref{alba}, that the solution is actually a minimizer.
Then, Theorem~\ref{teosavin} (and the gain of one more dimension, $n=8$, thanks to the monotonicity of the solution)
leads to the proof of Conjecture (DG) for monotone solutions with limits $\pm 1$.

However, for $4\leq n\leq 8$ the conjecture in its original statement (i.e., without the limits $\pm 1$ as hypothesis) is still open.
To our knowledge no clear evidence is known about its validity
or not.

The proof of Theorem \ref{teosavin} uses an improvement of flatness result for the Allen-Cahn equation developed by Savin, as well as Theorem \ref{thmcone} on the non-existence of stable minimal cones in dimension $n \le 7$.

Instead, the proofs of Conjecture (DG) in dimensions $2$ and $3$ are much simpler. They use the energy estimates of Corollary \ref{tupper} and a Liouville-type theorem developed in \cite{ambcab} (see also \cite{AAC}). As explained next, the idea of the proof originates in the paper \cite{BCN} by Berestycki, Caffarelli, and Nirenberg.

\smallskip
\noindent
{\bf Motivation for the proof of Conjecture (DG) for $n \le 3$.}
In \cite{BCN} the authors made the following heuristic observation. From the equation
$- \De u = f(u)$ and the monotonicity assumption \eqref{monoton}, by differentiating we find that 
\begin{equation}
\label{eq:remforseno}
u_{x_n}>0 \, \mbox{ and } \, L u_{x_n} := \left( - \De - f'(u) \right) u_{x_n}=0 \, \mbox{ in } \RR^n .
\end{equation} 
If we were in a bounded domain $\Om\subset \RR^n$ instead of $\RR^n$ (and we forgot about boundary conditions), from \eqref{eq:remforseno}, we would deduce that $u_{x_n}$ is the first eigenfunction of $L$ and that its first eigenvalue is $0$. As a consequence, such eigenvalue is simple. But then, since we also have that
$$
L u_{x_i} = \left( - \De - f'(u) \right) u_{x_i}=0 \quad \mbox{for } \, i=1, \dots, n-1,
$$
the simplicity of the eigenvalue would lead to 
\begin{equation}\label{eq:remforseno2}
u_{x_i}= c_i u_{x_n} \quad \mbox{for } \, i=1, \dots, n-1 ,
\end{equation}
where $c_i$ are constants. Now, we would conclude that $u$ is a 1-d solution, by the following exercise.
\begin{exercise}
Check that \eqref{eq:remforseno2}, with $c_i$ being constants, is equivalent to the fact that $u$
is a 1-d solution.
\end{exercise}
To make this argument work in the whole $\RR^n$, one needs a Liouville-type theorem. For $n=2$ it was proved in \cite{BCN} and \cite{GG}. Later, \cite{ambcab} used it to prove Conjecture (DG) in $\RR^3$ after proving the crucial energy estimate \eqref{upperbou}. The Liouville theorem requires the right hand side of \eqref{upperbou} to be bounded by $C R^2= C R^{3-1}$.

\smallskip

In 2011, del Pino, Kowalczyk, and Wei \cite{dP} established 
that Conjecture~(DG) does not hold for $n\geq 9$ 
-- as suggested in De Giorgi's original statement.

\begin{theorem}[del Pino-Kowalczyk-Wei \cite{dP}]\label{thm:delP K W}
If $n \ge 9$, there exists a solution of \eqref{AlCa}, satisfying \eqref{monoton} and \eqref{limiting}, and which is not a $1$-d solution.
\end{theorem}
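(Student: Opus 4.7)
The plan is to construct $u$ whose nodal set is a small perturbation of a Bombieri-De Giorgi-Giusti (BDG) minimal graph, via a Lyapunov-Schmidt / infinite-dimensional reduction around the one-dimensional heteroclinic profile $u_*$. The starting point is that, in dimension $n\ge 9$, Bombieri, De Giorgi, and Giusti \cite{BdGG} produced an entire smooth function $F:\mathbb{R}^{n-1}\to\mathbb{R}$ solving the minimal surface equation whose graph $\Gamma=\{x_n=F(x')\}$ is not a hyperplane (and is asymptotic at infinity to a Simons-type cone, which is available once $n-1\ge 8$). This $\Gamma$ is a minimizing, hence stable, minimal hypersurface.

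First, introduce a small dilation parameter $\alpha>0$ and replace $\Gamma$ by the rescaled (still minimal) graph $\Gamma_\alpha:=\{x_n=\alpha^{-1}F(\alpha x')\}$, whose second fundamental form $A_{\Gamma_\alpha}$ has pointwise norm of order $\alpha$. Let $t(x)$ be the signed distance to $\Gamma_\alpha$ (positive above the graph and smooth in a tubular neighborhood of thickness $O(\alpha^{-1})$), and let $\pi(x)$ be the nearest-point projection onto $\Gamma_\alpha$. For an unknown normal displacement $h:\Gamma_\alpha\to\mathbb{R}$, consider the ansatz $U_h(x):=u_*(t(x)-h(\pi(x)))$, patched to $\pm 1$ outside the tubular neighborhood by a standard cut-off. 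Plugging $U_h$ into \eqref{AlCa}, using the minimal surface equation $\mathcal{H}_{\Gamma_\alpha}=0$ together with the ODE $-u_*''=u_*-u_*^3$, the error splits into a leading curvature term of size $|A_{\Gamma_\alpha}|^2\,t\,u_*'(t)$, plus a linear contribution in $h$ involving $\Delta_{\Gamma_\alpha}h+|A_{\Gamma_\alpha}|^2 h$, plus genuinely nonlinear remainders.

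Next, write $u=U_h+\phi$ and impose the orthogonality condition that $\phi(\cdot,\pi(x))$ be $L^2$-orthogonal to $u_*'(t)$ in the normal variable; this kills the translation-invariance direction of the linearized operator. The condition decouples the problem into an \emph{outer} equation for $\phi$, solvable by contraction because $-\partial_{tt}-(1-3u_*^2)$ is uniformly invertible on the $L^2$-orthogonal complement of $u_*'$; and a \emph{reduced} equation on the graph of the form
\[
J_{\Gamma_\alpha}h=\bigl(\Delta_{\Gamma_\alpha}+|A_{\Gamma_\alpha}|^2\bigr)h=\mathcal{E}_\alpha+\mathcal{N}_\alpha(h),
\]
where $J_{\Gamma_\alpha}$ is the Jacobi operator of $\Gamma_\alpha$, $\mathcal{E}_\alpha$ is an explicit error of order $\alpha^2$ in the appropriate weighted norm, and $\mathcal{N}_\alpha$ is a nonlinear remainder with quadratic dependence on $h$.

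The main obstacle is Step~4: producing a bounded right inverse for $J_{\Gamma_\alpha}$ in weighted H\"older spaces adapted to the conical asymptotics of $\Gamma$. Because $|A_\Gamma|$ only decays polynomially, $\Gamma$ is non-compact with non-trivial end, and the weights must be tuned simultaneously to the decay of $\mathcal{E}_\alpha$, the growth of allowed test functions, and the quadratic bound on $\mathcal{N}_\alpha$; one exploits here the \emph{non-degeneracy} of the BDG graph (which, as a minimizing minimal graph in $n\ge 9$, admits a positive Jacobi field of controlled growth, giving an $L^\infty$-to-weighted-$L^\infty$ inversion of $J_\Gamma$ by barrier and Fredholm-type arguments). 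With the right inverse in hand, a fixed point in a small ball of the weighted space yields, for every $\alpha$ sufficiently small, a genuine solution $u$ of \eqref{AlCa}. The remaining properties are then verified: monotonicity $u_{x_n}>0$ follows from the fact that $\Gamma_\alpha$ is a global graph over $\{x_n=0\}$ together with the smallness of $\phi$ in $C^1$ and a maximum-principle argument applied to the linearized equation satisfied by $u_{x_n}$; the limits $u\to\pm 1$ as $x_n\to\pm\infty$ follow from the exponential convergence $u_*(y)\to\pm 1$ and the globality of $\Gamma_\alpha$; and $u$ is manifestly not $1$-d since its nodal set is a small perturbation of the non-planar $\Gamma_\alpha$.
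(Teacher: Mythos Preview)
The paper does not prove this theorem; it merely states it, cites \cite{dP}, and remarks in one sentence that the construction uses the Bombieri--De~Giorgi--Giusti minimal graph in $\mathbb{R}^9$ and that the solution has its zero level set at finite distance from that graph. Your outline is a correct and considerably more detailed sketch of exactly the del~Pino--Kowalczyk--Wei infinite-dimensional Lyapunov--Schmidt reduction that the paper is pointing to, so it is fully consistent with (and substantially expands upon) the paper's brief description.
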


The proof in \cite{dP} uses crucially the minimal graph in $\RR^9$ built by Bombieri, De Giorgi, and Giusti in \cite{BdGG}. This is a minimal surface in $\RR^9$ given by the graph of a function $\phi: \RR^8 \to \RR$ which is antisymmetric with respect to the Simons cone.
The solution of Theorem \ref{thm:delP K W} is built in such a way that its zero level set stays at finite distance from the Bombieri-De Giorgi-Giusti graph.

We consider next a similar object to the previous minimal graph, but in the context of the Allen-Cahn equation: a solution $u: (\RR^8 =) \RR^{2m} \to \RR$ which is antisymmetric with respect to the Simons cone.

\subsection{The saddle-shaped solution vanishing on the Simons cone}\label{subsec:saddlesha}

As in Section \ref{sec:mincones}, let $m \ge 1$ and denote by $\cSC$
the Simons cone \eqref{def:simonscone}.
For $x=(x_1,\dots, x_{2m})\in\RR^{2m}$, $s$ and $t$ denote the two
radial variables
\begin{equation}\label{coor}
s =   \sqrt{x_1^2+...+x_m^2} \quad \mbox{ and } \quad t  =  \sqrt{x_{m+1}^2+...+x_{2m}^2}.
\end{equation}
The Simons cone is given by 
$$
\cSC=\{s=t\}=\partial E, \quad\text{ where }
E =\{s>t\}.
$$ 


\begin{definition}[Saddle-shaped solution]\label{def:saddlesolution} 
We  say  that $u:\RR^{2m}\rightarrow\RR$ is a {\it saddle-shaped
solution}  (or simply a saddle solution) of the Allen-Cahn equation 
\begin{equation}\label{eq2m}
-\Delta u= u- u^3 \quad {\rm in }\;\RR^{2m}
\end{equation}
whenever $u$ is a
solution of
\eqref{eq2m} and, with $s$ and $t$ defined by \eqref{coor},
\renewcommand{\labelenumi}{$($\alph{enumi}$)$}
\begin{enumerate}
\item $u$ depends only on the variables $s$ and $t$. We write
$u=u(s,t)$; 
\item $u>0$ in $E:=\{s>t\}$; 
\item  $u(s,t)=-u(t,s)$ in $\RR^{2m}$.
\end{enumerate}
\end{definition}

\begin{figure}[htbp]
\centering
\includegraphics[scale=.25]{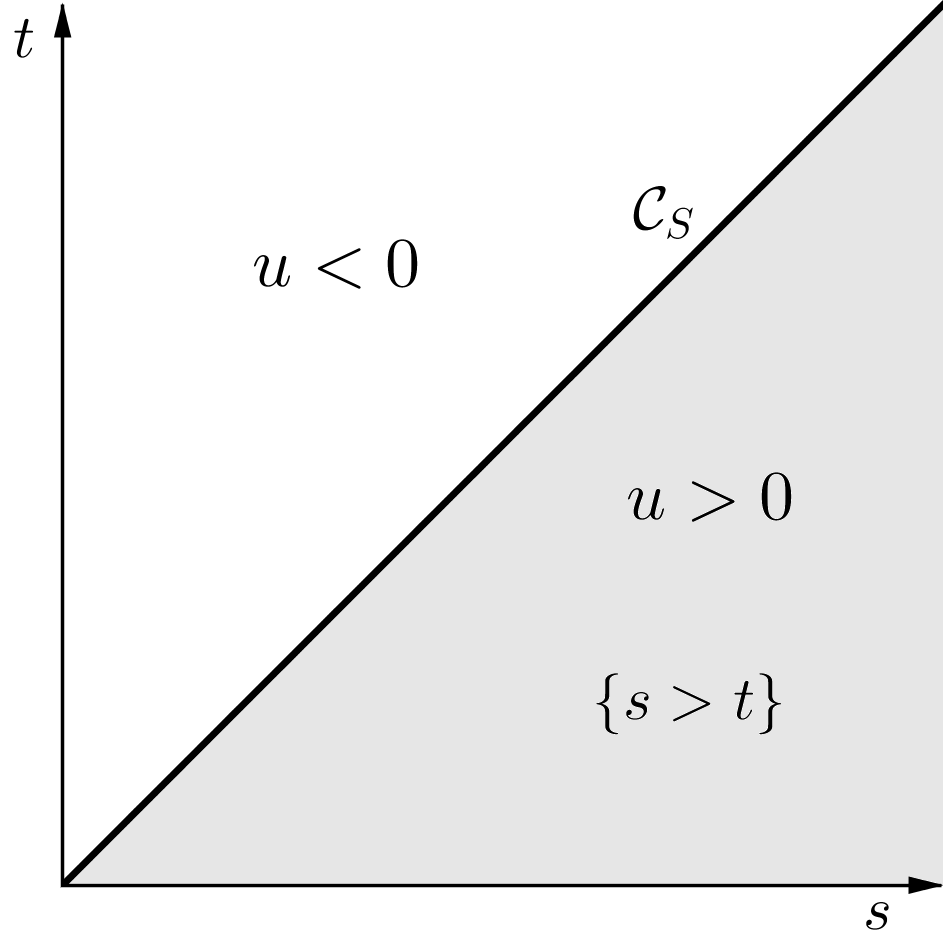}
\caption{The saddle-shaped solution $u$ and the Simons cone $\cSC$}
\label{fig:10}       
\end{figure}

\begin{remark}
Notice that if $u$ is a saddle-shaped solution, then we have $u=0$ on $\cSC$ (see Figure~\ref{fig:10}).
\end{remark}

While the existence of a saddle-shaped solution is easily established, its uniqueness is more delicate. This was accomplished in 2012 by the first author in \cite{Ca}.

\begin{theorem}[Cabr\'e \cite{Ca}] \label{unique}
For every even dimension  $2m\geq 2$,
there exists a unique saddle-shaped solution $u$ of \eqref{eq2m}.
\end{theorem}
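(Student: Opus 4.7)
The strategy is to exploit the antisymmetry $u(s,t) = -u(t,s)$ to reduce uniqueness of saddle-shaped solutions to uniqueness of positive solutions of the Dirichlet problem
$$
-\Delta u = u - u^3 \quad \text{in } E, \qquad u = 0 \text{ on } \partial E = \cSC,
$$
with $0 < u < 1$ in $E$ and the natural asymptotic behavior $u \to 1$ as $|x| \to \infty$ inside $E$ away from $\cSC$. The structural ingredient I intend to exploit is that $f(u)/u = 1 - u^2$ is strictly decreasing on $(0,1)$, which is exactly the hypothesis behind the Brezis--Oswald/Picone uniqueness principle for positive solutions of semilinear elliptic equations.

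First I would nail down the a priori properties. The strong maximum principle applied to $-\Delta u = u(1-u^2)$, compared against the trivial solutions $u \equiv \pm 1$, together with Definition~\ref{def:saddlesolution}(b)--(c), forces $0 < u < 1$ strictly in $E$ and $-1 < u < 0$ in $\RR^{2m}\setminus\overline{E}$, while Hopf's lemma gives linear vanishing of $u$ along $\cSC$. Next I would establish $u(x) \to 1$ as $|x| \to \infty$ with $\mathrm{dist}(x,\cSC)\to\infty$: for any such sequence $x_k$, the translates $u_k(y) := u(x_k + y)$ subconverge in $C^2_{\mathrm{loc}}$ to a bounded solution $u_\infty$ of Allen--Cahn on $\RR^{2m}$ with $0 \le u_\infty \le 1$, and minimality properties of saddle solutions (or a direct comparison with a suitably translated 1-d layer $u_*(\cdot\,e)$ that lies strictly below the $u_k$ once $x_k$ is far from the cone) force $u_\infty \equiv 1$.

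Given two saddle solutions $u_1, u_2$, I would then run a Picone/Brezis--Oswald computation: test the equation for $u_i$ against $(-1)^{i+1}(u_1^2 - u_2^2)/u_i$ over $E\cap B_R$ and integrate by parts. The right-hand sides collapse algebraically to
$$
\int_{E\cap B_R}(u_1^2-u_2^2)\bigl[(1-u_1^2)-(1-u_2^2)\bigr]\,dx = -\int_{E\cap B_R}(u_1^2-u_2^2)^2\,dx ,
$$
while the bulk left-hand side, after using $(u_2/u_1)(u_1/u_2)=1$, rearranges to the nonnegative Picone sum
$$
\int_{E\cap B_R}\left\{\Bigl|\nabla u_1 - \tfrac{u_1}{u_2}\nabla u_2\Bigr|^2 + \Bigl|\nabla u_2 - \tfrac{u_2}{u_1}\nabla u_1\Bigr|^2\right\}dx .
$$
If the boundary contributions vanish in the limit $R\to\infty$, both integrals must equal zero, giving $u_1\equiv u_2$ in $E$; the antisymmetry then propagates the equality to all of $\RR^{2m}$.

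The main obstacle I anticipate is precisely controlling those boundary terms. On $\cSC$ the linear vanishing of both $u_i$ makes the test function $u_i - u_{3-i}^2/u_i$ tend to $0$ continuously, so this piece is harmless. On the spherical slice $E\cap\partial B_R$, however, pointwise convergence $u_i \to 1$ is insufficient by itself, and one must extract a quantitative rate in order that $\int_{E\cap\partial B_R}|\nabla u_i|\,|u_i - u_{3-i}^2/u_i|\,dH_{2m-1}$ decays along a sequence $R_k\to\infty$. I would attack this through Caccioppoli-type estimates on annular shells $B_{2R}\setminus B_R$ combined with the uniform $C^1$ bound for solutions of Allen--Cahn. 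As a fallback, should the needed rate prove stubborn, one can first prove monotonicity of any saddle solution in a direction normal to $\cSC$ (via a moving-planes/sliding argument in the variable $s-t$) and then replace the Picone identity with a direct strong-maximum-principle comparison of $u_1$ and $u_2$, which sidesteps the global integrability issue entirely.
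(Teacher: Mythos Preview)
The paper does not prove Theorem~\ref{unique}; it merely states the result and refers to \cite{Ca}. So there is no in-paper argument to compare against, and I comment directly on your proposal.

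The Picone/Brezis--Oswald route is natural in view of the strict monotonicity of $f(u)/u=1-u^2$, but the obstacle you flag is a genuine gap, not a technicality. Your step ``$u(x)\to 1$ as $|x|\to\infty$ with $\mathrm{dist}(x,\cSC)\to\infty$'' is incomplete: the compactness argument only yields a limit $u_\infty$ with $0\le u_\infty\le 1$ solving Allen--Cahn, and you cannot appeal to ``minimality properties of saddle solutions'' because an arbitrary function satisfying Definition~\ref{def:saddlesolution} is \emph{not} known a priori to be a minimizer --- indeed in dimensions $2m\le 6$ saddle solutions are unstable, hence certainly not minimizers. Nor can you slide a $1$-d layer underneath $u$ without already possessing a uniform positive lower bound for $u$ away from the cone, which is part of what you are trying to establish. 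Even granting that step, the boundary term on $E\cap\partial B_R$ also runs through the region where $s-t$ stays bounded while $|x|\to\infty$; there $u_i$ is not close to $1$, and controlling $u_1-u_2^2/u_1$ requires both solutions to approach the \emph{same} profile $u_*\bigl((s-t)/\sqrt{2}\bigr)$ with a quantitative rate. Proving this common asymptotic is essentially the heart of the uniqueness theorem, so the Picone identity does not shortcut it.

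Your fallback is in fact the strategy carried out in \cite{Ca}: one first shows, via maximal-solution constructions and sliding/moving-plane arguments, that every saddle solution enjoys the asymptotic $u(x)-u_*\bigl((s-t)/\sqrt{2}\bigr)\to 0$ as $|x|\to\infty$ together with the monotonicity $u_s>0$, $u_t<0$ in $\{s>t>0\}$; only then can two saddle solutions be compared by the maximum principle. The hard work lies precisely in obtaining these a priori properties for an \emph{arbitrary} saddle solution, and your proposal does not supply them.
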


Due to the minimality of the Simons cone when $2m \ge 8$ (and also because of the minimizer from \cite{LWW} referred to after Theorem \ref{teosavin}), the saddle-shaped solution is expected to be a minimizer when $2m \ge 8$:

\begin{open problem}
Is the saddle-shaped solution a minimizer of \eqref{eq2m} in $\RR^8$, or at least in higher even dimensions?
\end{open problem}

Nothing is known on this open problem except for the following result.
It establishes stability (a weaker property than minimality) for $2m \ge 14$. Below, we sketch its proof.

\begin{theorem}[Cabr\'e \cite{Ca}]\label{stab} 
If $2m\geq 14$, the saddle-shaped solution $u$ of \eqref{eq2m} is stable in $\RR^{2m}$, in the sense of the following definition.
\end{theorem}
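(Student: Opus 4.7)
The stability condition requires
$$
Q(\xi) := \int_{\RR^{2m}}\bigl(|\nabla\xi|^2 - (1-3u^2)\xi^2\bigr)\,dx \ge 0 \quad\text{for every } \xi\in C_c^1(\RR^{2m}),
$$
since the linearization of \eqref{eq2m} at $u$ is $L = -\Delta - (1-3u^2)$. My plan is to apply the classical Allegretto--Piepenbrink principle: it suffices to exhibit a strictly positive function $\varphi$ on $\RR^{2m}$ satisfying $L\varphi \ge 0$ in the distributional sense. Given such $\varphi$, writing $\xi=\varphi\eta$ and integrating by parts yields the algebraic identity $Q(\varphi\eta) = \int \varphi^2|\nabla\eta|^2\,dx + \int \eta^2\,\varphi\,L\varphi\,dx \ge 0$, which gives stability at once. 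The entire problem therefore reduces to constructing $\varphi$.

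Because $u=u(s,t)$ and the potential $1-3u^2$ is symmetric under $(s,t)\leftrightarrow (t,s)$, I would look for $\varphi=\varphi(s,t)$ in the $(s,t)$-variables. A first natural attempt exploits the antisymmetry of $u$: on $E=\{s>t\}$ the function $u$ itself is positive and satisfies $Lu = -\Delta u -(1-3u^2)u = 2u^3 \ge 0$, so $u$ is a positive supersolution on $E$. However, $u$ vanishes on $\cSC$, so it cannot serve globally. I would therefore try an ansatz of the form $\varphi(s,t) = (st)^{-\gamma}\Phi(s,t)$, where the weight $(st)^{-\gamma}$ is a Hardy factor (cf.\ Proposition \ref{Prop:Hardy inequality}) and $\Phi$ is a strictly positive symmetric function built from $u$; a candidate for $\Phi$ is something like $\sqrt{u^2+\varepsilon}$, or a combination involving $u_s - u_t$, which is positive across the cone thanks to the monotonicity coming from the uniqueness Theorem \ref{unique}.

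With the weight $(st)^{-\gamma}$, the operator $L$ acting on $\varphi$ produces two inverse-square Hardy coefficients $\gamma(m-2-\gamma)/s^2$ and $\gamma(m-2-\gamma)/t^2$, each maximized at $\gamma=(m-2)/2$ with common value $(m-2)^2/4$. Close to $\cSC$, where $u^2$ is small and $-(1-3u^2)\approx -1$, these Hardy terms must absorb the $+1$ from the potential; far from $\cSC$, $1-3u^2\to 2>0$ and stability is automatic. A quantitative comparison of the two regimes, once the algebra of $L\varphi$ is unfolded in the weighted half-plane with Jacobian $(st)^{m-1}\,ds\,dt$, should produce a sharp condition on $m$ of Hardy type, exactly analogous to the condition $3\le n\le 7$ appearing in the proof of Theorem~\ref{thmcone}; the expectation is that this condition is met precisely when $2m\ge 14$, because the Hardy constant $(m-2)^2/4$ in each of $s$ and $t$ must dominate a critical constant arising from the cross-term between the two directions and the coefficient $(m-1)$ of the radial part of $\Delta$.

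The main obstacle I foresee is twofold. First, one must make the correct symmetric positive choice of $\Phi$: a pure Hardy weight $(st)^{-\gamma}$ on its own is not a supersolution near the cone, and the $u$-part is not a supersolution at infinity, so the product must be tuned carefully and verified pointwise. This requires sharp bounds on $u$ in both asymptotic regimes: near $\cSC$ a comparison $u(x)\approx u_*(d(x,\cSC))$ with the one-dimensional layer, and far from $\cSC$ an exponential decay estimate for $1-u^2$. Second, the positivity of $u_s-u_t$ (or whichever Jacobi-field-type ingredient enters $\Phi$) has to be available globally, not just near the cone; this likely demands a moving-plane/sliding argument in the spirit of those used in Theorem \ref{unique}, or alternatively a preliminary decomposition of test functions into components symmetric and antisymmetric under $(s,t)\leftrightarrow (t,s)$, allowing $u$ and $u_s-u_t$ to be used separately on the two summands. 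The dimensional threshold $2m\ge 14$ should then drop out of the quantitative Hardy inequality in the $(s,t)$-quadrant required to close the positive-supersolution estimate.
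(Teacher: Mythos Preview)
Your overall strategy is exactly the one the paper follows: exhibit a positive supersolution $\varphi$ of the linearized operator $\Delta + f'(u)$ on $\{st>0\}$, and then deduce stability by the standard $\xi^2/\varphi$ integration-by-parts (your Allegretto--Piepenbrink step is precisely the computation the paper carries out). Where you diverge is in the choice of $\varphi$. The paper does \emph{not} take a product $(st)^{-\gamma}\Phi(s,t)$ with a symmetric $\Phi$ built from $u$ or $\sqrt{u^2+\varepsilon}$; instead it sets
\[
\varphi \;=\; t^{-b}\,u_s \;-\; s^{-b}\,u_t,
\]
i.e.\ an \emph{asymmetric} weighting of the two partial derivatives separately. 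One then differentiates the PDE \eqref{eqst} in $s$ and in $t$ to obtain the equations satisfied by $u_s$ and $u_t$ (these already carry the coefficient $f'(u)$), and combines them with the Hardy-type terms produced by the weights $t^{-b}$, $s^{-b}$. The positivity $\varphi>0$ on $\{st>0\}$ and the supersolution inequality both rely on finer monotonicity properties of the saddle solution established in \cite{Ca}; the dimensional threshold $2m\ge 14$ then arises from the admissible range of the exponent $b$.

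Your product ansatz $(st)^{-\gamma}(u_s-u_t)$ is close in spirit but does not decouple the Hardy contributions correctly: with a common factor $(st)^{-\gamma}$ the cross-terms generated by $\Delta$ do not cancel against the first-order terms coming from the radial part $(m-1)(u_s/s+u_t/t)$, whereas pairing $u_s$ with a power of $t$ (and $u_t$ with a power of $s$) is precisely what makes the computation close. Your first candidate $\Phi=\sqrt{u^2+\varepsilon}$ is further off: it produces no useful differential identity for $L\Phi$, and near the cone the Hardy gain from $(st)^{-\gamma}$ alone cannot compensate the potential $-(1-3u^2)\approx -1$, since $s\approx t$ is bounded away from zero on most of $\cSC$. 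So the missing concrete idea is the specific supersolution $t^{-b}u_s - s^{-b}u_t$; once you have it, the rest of your outline (including the final cut-off to reach all of $C^1_c(\RR^{2m})$) matches the paper exactly.
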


\begin{definition}[Stability]
We say that a
solution $u$ of $- \De u = f(u)$ in $\RR^n$ is {\it stable} if the second variation of the energy with respect to compactly
supported perturbations $\xi$ is nonnegative. That is, if
\begin{equation*}
\int_{{\RR}^{n}}\left\{ |\nabla\xi|^2-f'(u)\xi^2\right\}dx\geq 0 \quad \mbox{ for all } \;\xi\in C^1_c(\RR^{n}) .
\end{equation*}
\end{definition}

In the rest of this section, we will take $n=2m$ and $f$ to be the Allen-Cahn nonlinearity, i.e.,
$f (u) = u- u^3 .$

\begin{sketch}[of Theorem \ref{stab}]
Notice that
\begin{equation}\label{eqst}
u_{ss}+u_{tt}+(m-1){\Big (}\frac{u_s}{s}+\frac{u_t}{t}{\Big
)}+ f(u) =0 ,
\end{equation}
for $s>0$ and $t>0$,
is equation \eqref{eq2m} expressed in the 
$(s,t)$ variables.
Let us introduce the function
\begin{equation}\label{0001:eq:deffi}
\varphi := t^{-b}u_s -s^{-b}u_t .
\end{equation}
Differentiating \eqref{eqst} with respect to $s$ (and to $t$), one finds equations satisfied by $u_s$ (and by $u_t$) -- and which involve a zero order term with coefficient $f'(u)$. These equations, together with some more delicate monotonicity properties of the saddle-shaped solution established in \cite{Ca}, can be used to prove the following fact.

For $2m\geq 14$, one can choose $b>0$ in \eqref{0001:eq:deffi} (see \cite{Ca} for more details) such that $\varphi$ is a positive supersolution of the linearized problem, i.e.:
\begin{equation}\label{posivar}
\varphi >0 \quad \text{in } \{st> 0\},
\end{equation}
\begin{equation}\label{superpf}
\{\Delta + f'(u)\} \varphi \leq 0 \quad \text{in } \RR^{2m}\setminus\{st=0\}=\{st>0\} .
\end{equation}

Next, using \eqref{posivar} and \eqref{superpf}, we can verify the stability
condition of $u$ for any $C^1$ test function
$\xi = \xi(x)$ with compact support in $\{st>0\}$. Indeed, multiply \eqref{superpf}
by $\xi^2/\varphi$ and integrate by parts to get
\begin{eqnarray*}
\int_{\{st>0\}} f'(u)\,\xi^2 \, dx  
& = &\int_{\{st>0\}} f'(u) \varphi\, \frac{\xi^2}{\varphi} \, dx \\
& \leq & \int_{\{st>0\}} -\Delta \varphi \, \frac{\xi^2}{\varphi}  \, dx \\
& = &\int_{\{st>0\}} 
\nabla \varphi \, \nabla\xi\, \frac{2\xi}{\varphi}  \, dx 
-\int_{\{st>0\}} \frac{|\nabla \varphi|^2}{\varphi^2}\, \xi^2 \, dx .
\end{eqnarray*}
Now, using the Cauchy-Schwarz inequality, we are led to
\begin{equation*}
\int_{\{st>0\}} f'(u)\,\xi^2 \, dx \leq \int_{\{st>0\}} |\nabla\xi|^2 \, dx .
\end{equation*}

Finally, by a cut-off argument we can prove that this same inequality holds also for every function $\xi \in C^1_c(\RR^{2m})$.
\qed

\end{sketch}

\begin{remark}
Alternatively to the variational proof seen above, another way to establish stability from the existence of a positive supersolution to the linearized problem is by using the maximum principle (see \cite{BNV} for more details).
\end{remark}

\section{Blow-up problems} 

In this final section, we consider positive solutions of the semilinear problem

\begin{equation}\label{pb}
\left\{
\begin{array}{rcll}
-\De u &=& f(u) & \quad\mbox{in } \Om \\
u &>& 0  & \quad\mbox{in } \Om \\
u &=& 0  & \quad\mbox{on } \pa\Om ,\\
\end{array}\right.
\end{equation}
where $\Om \subset \RR^n$ is a smooth bounded domain, $n\geq 1$, and $f: \RR^+ \to \RR$ is $C^1$.

The associated energy functional is
\begin{equation}\label{eq:energiaultimasec}
E_\Om (u):=\int_\Omega \left\{ \frac{1}{2} |\nabla u|^2 - F(u) \right\} dx ,
\end{equation}
where $F$ is such that $F' = f$.

\subsection{Stable and extremal solutions. A singular stable solution for $n \ge 10$}

We define next the class of stable solutions to \eqref{pb}. It includes any local minimizer, i.e., any minimizer of \eqref{eq:energiaultimasec} under small perturbations vanishing on $\pa \Om$.

\begin{definition}[Stability]
A solution $u$ of \eqref{pb} is said to be \emph{stable} if 
the second variation of the energy with respect to $C^1$ perturbations $\xi$ vanishing on $\pa \Om$ is nonnegative. That is, if
\begin{equation} \label{stability}
\int_{\Omega} f'(u)\xi^2\,dx
\leq
\int_{\Omega} |\nabla\xi|^2\,dx \quad\textrm{for all } \xi \in C^1 (\ol{\Om} ) \textrm{ with } \xi_{|\pa \Om} \equiv 0.
\end{equation}
\end{definition}


There are many nonlinearities for which \eqref{pb} admits a (positive) stable solution. 
Indeed, replace $f(u)$ by $\lambda f(u)$
in \eqref{pb}, with $\lambda\geq 0$:
\begin{equation}\label{pbla}
\left\{
\begin{array}{rcll}
-\Delta u &=& \la f(u) & \quad\mbox{in $\Omega$}\\
u &=& 0  & \quad\mbox{on $\pa\Omega$.}\\
\end{array}\right.
\end{equation}
Assume that $f$ is positive, nondecreasing, and superlinear at $+\infty$, that is,
\begin{equation} \label{hypf}
f(0) > 0, \quad f'\geq 0 \quad\textrm{and}\quad\lim_{t\to +\infty}\frac{f(t)}t=+\infty.
\end{equation}
Note that also in this case we look for positive solutions (when $\la>0$), since $f>0$.
We point out that, for $\la>0$, $u\equiv 0$ is not a solution.

\begin{proposition}\label{prop:extremal}
Assuming \eqref{hypf}, there exists an extremal parameter $\lambda^*\in (0,+\infty)$ such that if $0\le\lambda<\lambda^*$ then
\eqref{pbla} admits a minimal stable classical solution $u_\lambda$. Here ``minimal'' means the smallest among all the solutions, while ``classical'' means of class $C^2$. Being classical is a consequence of $u_\la \in L^{\infty} ( \Om)$ if $\la < \la^*$.

On the other hand, if $\lambda>\lambda^*$ then \eqref{pbla} has no classical solution.

The family of classical solutions  $\left\{ u_{\lambda }:0\le \lambda <\lambda ^{*}\right\}$ 
is increasing in $\lambda$, and its limit as $\lambda\uparrow\lambda^{*}$ is a weak solution $u^*=u_{\lambda^*}$ of \eqref{pbla} for $\la=\la^*$. 
\end{proposition}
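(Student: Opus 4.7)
The plan is to set $\lambda^* := \sup\{\lambda \ge 0 : \text{\eqref{pbla} has a classical solution}\}$ and to verify each listed property by a combination of the sub-/supersolution monotone iteration, comparison against the first Dirichlet eigenfunction, and a small-perturbation argument using the linearized operator. To show $0 < \lambda^* < \infty$, let $e \in C^2(\overline\Omega)$ solve $-\Delta e = 1$ in $\Omega$ with $e|_{\partial\Omega} = 0$. Since $f$ is nondecreasing and continuous, for $\lambda > 0$ small enough one has $\lambda f(\|e\|_\infty) \le 1$, so $e$ is a classical supersolution at parameter $\lambda$; because $f(0) > 0$ makes $u \equiv 0$ a subsolution, the iteration $-\Delta u_{k+1} = \lambda f(u_k)$ in $\Omega$, $u_{k+1}|_{\partial\Omega}=0$, with $u_0 \equiv 0$, yields an increasing sequence bounded above by $e$ and converging to a classical solution; hence $\lambda^* > 0$. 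For the upper bound, let $\varphi_1 > 0$ be the first Dirichlet eigenfunction of $-\Delta$ on $\Omega$ with eigenvalue $\mu_1 > 0$. Because $f(0) > 0$ and $f(t)/t \to \infty$ as $t \to 0^+$ and as $t \to \infty$, the continuous function $t \mapsto f(t)/t$ on $(0,\infty)$ attains a positive infimum $m_0 > 0$. Multiplying \eqref{pbla} by $\varphi_1$ and integrating by parts gives
\begin{equation*}
\mu_1 \int_\Omega u\, \varphi_1\, dx = \lambda \int_\Omega f(u)\varphi_1\, dx \ge \lambda m_0 \int_\Omega u\, \varphi_1\, dx,
\end{equation*}
forcing $\lambda \le \mu_1/m_0$ and thus $\lambda^* \le \mu_1/m_0 < \infty$.

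For $0 < \lambda < \lambda^*$, pick $\lambda' \in (\lambda,\lambda^*)$ admitting a classical solution $v$; since $f > 0$, one has $-\Delta v = \lambda' f(v) \ge \lambda f(v)$, so $v$ is a supersolution at $\lambda$, and the same monotone iteration starting from $0$ produces a classical solution $u_\lambda \le v$. By comparison with any other classical solution, $u_\lambda$ is the smallest, hence the \emph{minimal} one. Running the construction with $\lambda_1 < \lambda_2 < \lambda^*$ and $v = u_{\lambda_2}$ gives $u_{\lambda_1} \le u_{\lambda_2}$, and strict inequality in $\Omega$ follows from the strong maximum principle applied to $u_{\lambda_2}-u_{\lambda_1}$, proving the family is strictly increasing.

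Next I prove stability of $u_\lambda$. Let $\mu_1(\lambda)$ be the first Dirichlet eigenvalue of $-\Delta - \lambda f'(u_\lambda)$ on $\Omega$, with positive eigenfunction $\phi$; the stability inequality \eqref{stability} is equivalent to $\mu_1(\lambda) \ge 0$. Assume for contradiction that $\mu_1(\lambda) < 0$. A first-order Taylor expansion yields, for $\varepsilon > 0$ small,
\begin{equation*}
-\Delta(u_\lambda - \varepsilon \phi) - \lambda f(u_\lambda - \varepsilon \phi) = -\varepsilon \mu_1(\lambda) \phi + O(\varepsilon^2) > 0,
\end{equation*}
so $v := u_\lambda - \varepsilon \phi$ is a strict supersolution at $\lambda$; by Hopf's lemma applied to $u_\lambda$, $v$ is also positive in $\Omega$ for $\varepsilon$ sufficiently small. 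The monotone iteration from $0$ up to $v$ then produces a classical solution $\tilde u \le v < u_\lambda$ at parameter $\lambda$, contradicting minimality of $u_\lambda$.

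Finally, for the extremal solution, the monotonicity gives the pointwise limit $u^*(x) := \lim_{\lambda \uparrow \lambda^*} u_\lambda(x)$ in $[0,+\infty]$. Testing \eqref{pbla} against $\varphi_1$ yields $\mu_1 \int u_\lambda \varphi_1 = \lambda \int f(u_\lambda)\varphi_1$. Superlinearity combined with $f \ge f(0) > 0$ gives, for every $M > 0$, a constant $K_M$ with $f(t) \ge Mt - K_M$ for all $t \ge 0$; choosing $M$ so that $\lambda M > \mu_1$ uniformly for $\lambda$ close to $\lambda^*$ yields $(\lambda M - \mu_1)\int u_\lambda \varphi_1 \le \lambda K_M \int \varphi_1$, so that both $\int u_\lambda \varphi_1$ and $\int f(u_\lambda)\varphi_1$ are uniformly bounded as $\lambda \uparrow \lambda^*$. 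By monotone convergence, $u^*$ and $f(u^*)$ lie in $L^1(\Omega,\varphi_1\,dx)$, and passing to the limit in the integral formulation against smooth test functions vanishing on $\partial\Omega$ shows $u^*$ is a weak solution at $\lambda = \lambda^*$. The hard part is this final passage to the limit: $u^*$ may fail to be bounded, so only a weighted $L^1$ control on the nonlinearity $f(u_\lambda)$ is available, and the notion of weak solution must be chosen compatible with this limited regularity (typically one tests against $\zeta \in C^2(\overline\Omega)$ with $\zeta|_{\partial\Omega} = 0$).
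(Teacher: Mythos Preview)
The paper does not give its own proof of this proposition; it simply refers the reader to Dupaigne's book \cite{Dupaigne}. Your argument is correct and is in fact the standard proof one finds there: the sub/supersolution monotone iteration to build the minimal branch, the first-eigenfunction test to bound $\lambda^*$ from above and to obtain the weighted $L^1$ bounds on $u_\lambda$ and $f(u_\lambda)$, the instability-contradiction for stability of $u_\lambda$, and monotone convergence to pass to the weak extremal solution.

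Two minor points worth tightening. First, in the stability step your Taylor remainder is only $o(\varepsilon)$ under the paper's hypothesis $f\in C^1$ (not $O(\varepsilon^2)$); the argument still goes through because the mean-value form of the remainder is $\varepsilon\phi\bigl(f'(u_\lambda)-f'(\xi_\varepsilon)\bigr)$, which is dominated by $-\varepsilon\mu_1(\lambda)\phi$ uniformly in $\overline\Omega$ once $\varepsilon$ is small. Second, the paper's notion of weak solution requires $u^*\in L^1(\Omega)$ and $f(u^*)\,\mathrm{dist}(\cdot,\partial\Omega)\in L^1(\Omega)$, whereas your estimate gives $u^*\varphi_1,\,f(u^*)\varphi_1\in L^1(\Omega)$. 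Since $\varphi_1$ is comparable to $\mathrm{dist}(\cdot,\partial\Omega)$ by Hopf's lemma, the second requirement follows; for the first, test the equation against the solution $\zeta$ of $-\Delta\zeta=1$, $\zeta|_{\partial\Omega}=0$, to get $\int_\Omega u_\lambda\,dx=\lambda\int_\Omega f(u_\lambda)\zeta\,dx\le C\lambda\int_\Omega f(u_\lambda)\varphi_1\,dx$, which is already bounded.
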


\begin{definition}[Extremal solution]
The function $u^*$ given by Proposition \ref{prop:extremal} is called \emph{the extremal solution} of \eqref{pbla}.
\end{definition}

For a proof of Proposition \ref{prop:extremal} see the book \cite{Dupaigne} by L. Dupaigne.
The definition of weak solution (the sense in which $u^*$ is a solution) requires $u^* \in L^1 (\Om)$, $f(u^* ) \mathrm{dist}(\cdot, \pa \Om) \in L^1 (\Om)$, and the equation to be satisfied in the distributional sense after multiplying it by test functions vanishing on $\pa \Om$ and integrating by parts twice (see \cite{Dupaigne}). Other useful references regarding extremal and stable solutions are \cite{Brezis}, \cite{Cextremal}, and \cite{CabBound}.

Since 1996, Brezis has raised several questions regarding stable and extremal solutions; see for instance \cite{Brezis}. They have led to interesting works, some of them described next. One of his questions is the following.

\medskip
\noindent
\textbf{Question (Brezis).} Depending on the dimension $n$ or on the domain $\Om$, is the extremal solution $u^*$ of \eqref{pbla} bounded (and therefore classical) or is it unbounded?
More generally, one may ask the same question for the larger class of stable solutions to \eqref{pb}.

\medskip

\begin{figure}[htbp]
\centering
\includegraphics[scale=.25]{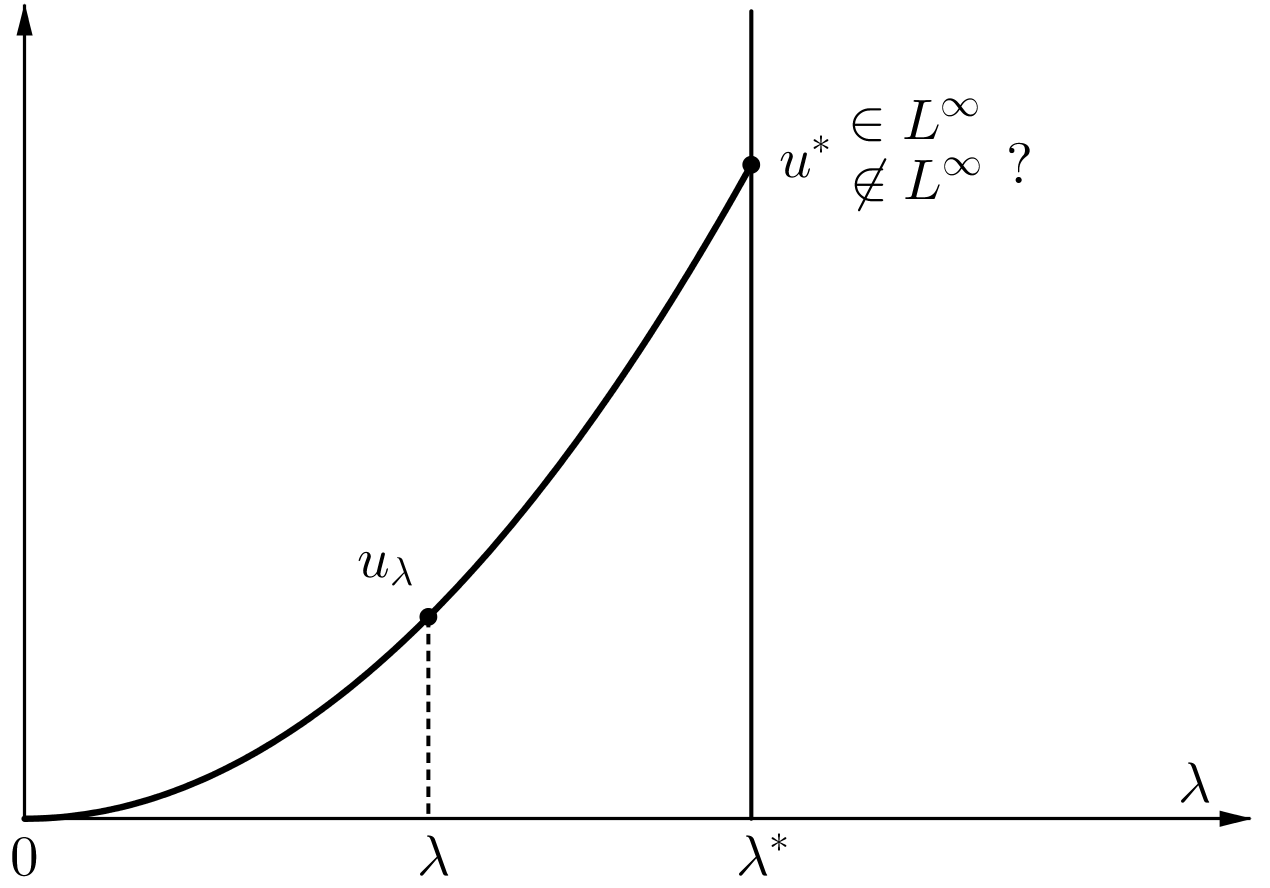}
\caption{The family of stable solutions $u_\la$ and the extremal solution $u^*$}
\label{fig:11}       
\end{figure}

The following is an explicit example of stable unbounded (or singular) solution.

It is easy to check that, for $n\geq 3$, the function $\tilde u=-2\log\vert{x}\vert$ is a solution of \eqref{pb} in $\Om =B_1$, 
the unit ball, for $f(u)= 2(n-2) e^u$. 
%
Let us now consider the linearized operator at $\tilde u$, which is given by  
$$
-\Delta -2(n-2)e^{\tilde u} =
-\Delta  -\frac{2(n-2)}{\vert{x}\vert^2}.
$$
If $n\ge 10$, then its first Dirichlet eigenvalue in $B_1$ is nonnegative.
This is a consequence of {\it Hardy's inequality} \eqref{Hardyin}:
$$
\frac{(n-2)^2}{4}\int_{B_1}\frac{\xi^2}{\vert{x}\vert^2} \, dx \;\; \leq\;\;
\int_{B_1} \vert\nabla \xi\vert^2 dx
\qquad\quad
\mbox{ for every } \xi\in H^1_0(B_1),
$$
and the fact that $2(n-2)\leq (n-2)^2/4$ if $n\geq 10$.
Thus we proved the following result.
\begin{proposition}\label{prop:dimensopt}
For $n\geq 10$, $\tilde u = -2\log\vert{x}\vert$ is an $H^1_0(B_1)$ stable weak solution of $-\De u = 2 (n-2) e^u $ in $B_1$, $u>0$ in $B_1$, $u=0$ on $\pa B_1$.
\end{proposition}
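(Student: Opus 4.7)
The plan is to verify each claim in the proposition — that $\tilde u = -2\log|x|$ is (a) a weak solution of the PDE, (b) positive in $B_1$ with zero boundary data in the $H^1_0$ sense, and (c) stable — and to see that the dimension condition $n\ge 10$ enters only at the stability step via Hardy's inequality. Since the excerpt preceding the statement already assembles all the necessary pieces, my proof will mostly be a matter of organizing the direct computations.

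First I would check that $\tilde u$ solves the equation pointwise in $B_1\setminus\{0\}$. A direct computation gives $\nabla \tilde u = -2x/|x|^2$ and, by the divergence formula $\mathrm{div}(x/|x|^2) = (n-2)/|x|^2$, one obtains $-\Delta\tilde u = 2(n-2)/|x|^2$. Since $e^{\tilde u} = |x|^{-2}$, this equals $2(n-2)e^{\tilde u}$, as required. The positivity $\tilde u > 0$ in $B_1$ and the boundary condition $\tilde u|_{\partial B_1} = 0$ are immediate from $\log|x|<0$ in $B_1$ and $\log 1 = 0$. For the $H^1$-integrability, $|\nabla \tilde u|^2 = 4/|x|^2$ is integrable on $B_1$ as soon as $n\ge 3$ (and in particular for $n\ge 10$), and one verifies $\tilde u\in H^1_0(B_1)$ by the standard truncation/regularization near the origin. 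The weak-solution property as defined via Dupaigne's notion (namely $\tilde u,\, f(\tilde u)\,\mathrm{dist}(\cdot,\partial B_1)\in L^1(B_1)$ and the distributional identity after integrating by parts twice) is then checked by multiplying $-\Delta\tilde u = 2(n-2)/|x|^2$ by a test function vanishing on $\partial B_1$ and noting that the singularity at the origin is integrable when $n\ge 3$.

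The core step is stability. The linearized operator is
\[
-\Delta - f'(\tilde u) \;=\; -\Delta - 2(n-2)e^{\tilde u} \;=\; -\Delta - \frac{2(n-2)}{|x|^2},
\]
so stability amounts to
\[
2(n-2)\int_{B_1} \frac{\xi^2}{|x|^2}\,dx \;\le\; \int_{B_1}|\nabla \xi|^2\,dx \qquad \text{for all } \xi\in H^1_0(B_1).
\]
By Hardy's inequality (Proposition \ref{Prop:Hardy inequality}) applied on $B_1$ — which follows from the $\RR^n$ version by density, since $C_c^\infty(B_1\setminus\{0\})$ is dense in $H^1_0(B_1)$ for $n\ge 3$ — the left-hand side is bounded by $\frac{4\cdot 2(n-2)}{(n-2)^2}\int |\nabla\xi|^2 = \frac{8}{n-2}\int|\nabla\xi|^2$. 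The coefficient $8/(n-2)$ is at most $1$ precisely when $n\ge 10$, which yields the stability inequality in exactly this range.

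I do not expect any genuine obstacle: the only quantitative input is the inequality $2(n-2)\le (n-2)^2/4$, equivalent to $n\ge 10$, which is where Hardy's best constant kicks in. The main thing to be careful about is the functional-analytic setting for the weak solution (in particular that $\tilde u$ really is an admissible weak solution in the sense of Proposition \ref{prop:extremal}), and the density argument needed to invoke Hardy's inequality on $B_1$ rather than on $\RR^n$; both are standard.
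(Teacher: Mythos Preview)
Your proof is correct and follows exactly the same approach as the paper: verify directly that $\tilde u$ solves the equation, identify the linearized operator as $-\Delta - 2(n-2)/|x|^2$, and conclude stability from Hardy's inequality together with the elementary fact that $2(n-2)\le (n-2)^2/4$ precisely when $n\ge 10$. You simply supply more detail on the functional-analytic verifications ($H^1_0$ membership, the weak-solution sense) than the paper, which leaves these as ``easy to check''.
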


Thus, in dimensions $n\geq 10$ there exist unbounded $H^1_0$ stable weak solutions of \eqref{pb}, 
even in the unit ball and for the exponential nonlinearity.
It is believed that $n \ge 10$ could be the optimal dimension for this fact, as we describe next.

\subsection{Regularity of stable solutions. The Allard and Michael-Simon Sobolev inequality}

The following results give $L^{\infty}$ bounds for stable solutions. To avoid technicalities we state the bounds for the extremal solution but, more generally, they also apply to every stable weak solution of \eqref{pb} which is the pointwise limit of a sequence of bounded stable solutions to similar equations (see \cite{Dupaigne}). 


\begin{theorem}[Crandall-Rabinowitz \cite{CR}]
\label{thm:CranRab}
Let $u^*$ be the extremal solution of \eqref{pbla} with $f(u)=e^u$ or $f(u)=(1+u)^p$, $p>1$. If $n\le 9$, then $u^* \in L^\infty (\Om)$.
\end{theorem}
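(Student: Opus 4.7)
The plan is to establish an $L^\infty$ bound that is \emph{uniform} in $\lambda$ for the minimal classical solutions $u_\lambda$ of \eqref{pbla} with $\lambda<\lambda^*$, and then pass to the limit $\lambda\uparrow\lambda^*$ by monotone convergence. I will describe the exponential case $f(u)=e^u$ in detail; the power case $f(u)=(1+u)^p$ is analogous using the test function $(1+u)^\beta-1$ instead of an exponential.

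First, I would test the equation $-\Delta u_\lambda=\lambda e^{u_\lambda}$ against $e^{\alpha u_\lambda}-1$, which vanishes on $\partial\Omega$. Integration by parts yields
\begin{equation*}
\alpha \int_\Omega e^{\alpha u_\lambda}|\nabla u_\lambda|^2\,dx \;=\; \lambda \int_\Omega e^{u_\lambda}\bigl(e^{\alpha u_\lambda}-1\bigr)\,dx.
\end{equation*}
Next, I would apply the stability inequality \eqref{stability} with the admissible test function $\xi=e^{\alpha u_\lambda/2}-1$, whose gradient squared is $(\alpha^2/4)\,e^{\alpha u_\lambda}|\nabla u_\lambda|^2$. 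Inserting this into stability and eliminating the gradient term via the previous identity gives
\begin{equation*}
\int_\Omega e^{u_\lambda}\bigl(e^{\alpha u_\lambda/2}-1\bigr)^{2}\,dx \;\le\; \frac{\alpha}{4}\int_\Omega e^{u_\lambda}\bigl(e^{\alpha u_\lambda}-1\bigr)\,dx.
\end{equation*}
Expanding the square on the left via $(e^{\alpha u/2}-1)^2=e^{\alpha u}-1-2(e^{\alpha u/2}-1)$ and rearranging, one obtains, for every $\alpha<4$,
\begin{equation*}
\Bigl(1-\tfrac{\alpha}{4}\Bigr)\int_\Omega e^{(\alpha+1)u_\lambda}\,dx \;\le\; 2\int_\Omega e^{(\alpha/2+1)u_\lambda}\,dx + C(\Omega),
\end{equation*}
and Young's (or H\"older's) inequality absorbs the middle term into the left-hand side, yielding a uniform bound on $\|e^{u_\lambda}\|_{L^{\alpha+1}(\Omega)}$ for every $\alpha<4$.

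Thus $f(u_\lambda)=e^{u_\lambda}$ is uniformly bounded in $L^{q}(\Omega)$ for every $q<5$. Standard Calder\'on--Zygmund $W^{2,q}$ estimates applied to the Poisson equation then give a uniform bound on $\|u_\lambda\|_{W^{2,q}(\Omega)}$, and Morrey's embedding upgrades this to a uniform $L^\infty$ (in fact $C^{0,\gamma}$) bound provided $q>n/2$. Such a $q<5$ with $q>n/2$ exists precisely when $n/2<5$, i.e.\ $n\le 9$. Passing to the limit as $\lambda\uparrow\lambda^*$ using monotonicity (Proposition~\ref{prop:extremal}), the bound persists and $u^*\in L^\infty(\Omega)$, so $u^*$ is classical.

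The main delicate point I foresee is threefold: (i) choosing the test function so that the exponents match perfectly, which here is possible because the nonlinearity is multiplicatively stable under exponentials --- for the power case one must verify that $\beta=\beta(p)$ can be chosen with $\beta<1+2\sqrt{p(p-1)}/\text{(something)}$ coming from the analog of $\alpha<4$, and simultaneously $2\beta+p-1>n/2$, which again pins the critical dimension at $n=10$; (ii) keeping every constant independent of $\lambda\in[0,\lambda^*]$, so boundedness survives the limit; and (iii) verifying that the test functions $e^{\alpha u_\lambda/2}-1$ and $e^{\alpha u_\lambda}-1$ are genuinely in the admissible class $C^1(\overline\Omega)$ vanishing on $\partial\Omega$, which uses only that $u_\lambda$ is a classical bounded solution for $\lambda<\lambda^*$ (so the whole argument must be carried out at the level $\lambda<\lambda^*$ before letting $\lambda\uparrow\lambda^*$). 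The sharpness of the dimension is already transparent from Proposition~\ref{prop:dimensopt}: in $n\ge10$, Hardy's inequality exhibits $-2\log|x|$ as a singular stable solution in the ball for $f(u)=2(n-2)e^u$, so the threshold $n\le 9$ cannot be improved by this method.
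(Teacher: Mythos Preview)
Your proposal is correct and follows essentially the same approach as the paper's sketch: the paper also works with the classical solutions $u_\lambda$ for $\lambda<\lambda^*$, plugs an exponential test function into the stability inequality, eliminates the gradient term by testing the equation, and concludes via Calder\'on--Zygmund plus Sobolev that $e^{u^*}\in L^q$ for all $q<5$ suffices when $n\le 9$. The only cosmetic difference is the labeling of the exponent---the paper takes $\xi=e^{\alpha u}-1$ in stability (with $\alpha<2$) and obtains $e^{u^*}\in L^{2\alpha+1}$, whereas you take $\xi=e^{\alpha u/2}-1$ (with $\alpha<4$) and obtain $e^{u_\lambda}\in L^{\alpha+1}$; these are the same computation under the relabeling $\alpha_{\text{paper}}=\alpha_{\text{yours}}/2$.
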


\begin{sketch}[in the case $f(u)= e^u$]
Use the equation in \eqref{pbla} for the classical solutions $u=u_\la$ ($\la < \la^*$), together with the stability condition \eqref{stability} for the test function $\xi=e^{\al u} -1$ (for a positive exponent $\al$ to be chosen later).
More precisely, start from \eqref{stability} -- with $f'$ replaced by $\la f'$ -- and to proceed with $\int_\Om \al^2 e^{2 \al u} |\na u|^2$, write $ \al^2 e^{2 \al u} | \na u|^2 = ( \al / 2) \na \left( e^{2 \al u} -1 \right) \na u$, and integrate by parts to use \eqref{pbla}.
For every $\al < 2$, verify that this leads, after letting $\la \uparrow \la^*$,
to $e^{u^*} \in L^{2 \al +1} (\Om)$. As a consequence, by Calder\'{o}n-Zygmund theory and Sobolev embeddings, $u^* \in W^{2, 2 \al +1} (\Om) \subset L^\infty (\Om)$ if $2 ( 2 \al +1) > n$. This requires that $n \le 9$.
\qed
\end{sketch}

Notice that the nonlinearities $f(u)=e^u$ or $f(u)=(1+u)^p$ with $p>1$ satisfy~\eqref{hypf}.

In the radial case $\Om=B_1$ we have the following result.

\begin{theorem}[Cabr\'e-Capella \cite{CC}]
\label{corolrad}
Let $u^*$ be the extremal solution of \eqref{pbla}. Assume that $f$ satisfies \eqref{hypf} and that $\Om=B_1$.
If $1\leq n\leq 9$, then $u^* \in L^\infty (B_1)$. 
\end{theorem}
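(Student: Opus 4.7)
The plan is to obtain an a~priori $L^{\infty}$ bound on the classical stable solutions $u_{\lambda}$, $\lambda<\lambda^{*}$, that is uniform in $\lambda$, and then pass to the limit; monotone convergence together with standard elliptic regularity will upgrade $u^{*}$ from a weak to a classical solution. I will assume $n\ge 3$, the cases $n=1,2$ following from a direct Sobolev embedding. Since $f(0)>0$ and $f'\ge 0$ give $f>0$ on $[0,+\infty)$, the Gidas--Ni--Nirenberg symmetry theorem applies to each $u_{\lambda}$ on $B_{1}$, so that $u_{\lambda}$ is radial and strictly radially decreasing. Writing $u_{\lambda}=u_{\lambda}(r)$ and setting
\begin{equation*}
\phi(r):=-u_{\lambda}'(r)\ge 0, \qquad r\in(0,1),
\end{equation*}
the program is to prove $u_{\lambda}(0)=\int_{0}^{1}\phi(r)\,dr\le C$ uniformly in $\lambda$.

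The key equation is the one for $\phi$. Differentiating the radial ODE $-u''-\tfrac{n-1}{r}u'=\lambda f(u)$ in $r$ one gets
\begin{equation*}
-\phi''-\frac{n-1}{r}\phi'+\frac{n-1}{r^{2}}\phi=\lambda f'(u)\phi \quad\text{in }(0,1),
\end{equation*}
i.e.\ $\phi$ is a positive solution of the linearized equation plus a Hardy-type potential with the ``wrong'' sign. Now test the stability condition \eqref{stability} with $\xi=\phi\psi$ for a radial Lipschitz function $\psi$ vanishing on $\partial B_{1}$, and subtract from it the identity obtained by multiplying the $\phi$-equation by $\phi\psi^{2}$ and integrating by parts. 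The $\lambda f'(u)\phi^{2}\psi^{2}$ terms cancel, as do the $|\nabla\phi|^{2}\psi^{2}$ and the $2\phi\psi\nabla\phi\cdot\nabla\psi$ terms, leaving the clean weighted Hardy-type inequality
\begin{equation*}
(n-1)\int_{B_{1}}\frac{\phi^{2}\psi^{2}}{|x|^{2}}\,dx \;\le\; \int_{B_{1}}\phi^{2}|\nabla\psi|^{2}\,dx,
\end{equation*}
valid for every admissible $\psi$ and with a constant depending only on $n$.

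Next, I would insert into this inequality a one-parameter family of radial test functions, such as $\psi(x)=|x|^{-a}-1$ with exponent $a>0$ cut off near the origin, and combine the resulting weighted $L^{2}$ bound for $\phi$ with a Moser/De Giorgi-type iteration on the ODE for $\phi$ (alternatively, with a direct barrier comparison using a power $r^{-\sigma}$). Using that $\phi(0)=0$ and $\phi$ is smooth, this upgrades the weighted $L^{2}$ information into the pointwise estimate
\begin{equation*}
\phi(r)\le C\,r^{-\sigma_{n}},\qquad r\in(0,1),
\end{equation*}
where $\sigma_{n}$ is an explicit exponent controlled by the Hardy constant $(n-2)^{2}/4$ and the coefficient $n-1$ above. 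The algebra works out so that $\sigma_{n}<1$ precisely when $n\le 9$; this is the same borderline $(n-2)^{2}/4\ge 2(n-2)$ that makes $-2\log|x|$ stable in Proposition \ref{prop:dimensopt}, only now read in the opposite direction. Integrating $\phi=-u_{\lambda}'$ from $0$ to $1$ gives $u_{\lambda}(0)\le C\int_{0}^{1}r^{-\sigma_{n}}\,dr<\infty$ uniformly in $\lambda$, and letting $\lambda\uparrow\lambda^{*}$ yields $u^{*}\in L^{\infty}(B_{1})$.

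The main obstacle is the step that passes from the weighted inequality $(n-1)\int \phi^{2}\psi^{2}/|x|^{2}\le\int\phi^{2}|\nabla\psi|^{2}$ to a sharp pointwise estimate with an exponent $\sigma_{n}$ that crosses $1$ exactly at $n=10$. The inequality is ``Hardy with weight $\phi^{2}$'', so the constant $(n-2)^{2}/4$ does not appear by itself; to saturate it one must choose the test functions $\psi$ very carefully and couple the resulting integral estimate with the ODE for $\phi$ (or run a truncation argument on the radial profile). Getting the correct critical dimension $9$, rather than some suboptimal threshold, is where the delicacy lies.
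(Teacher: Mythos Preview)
Your approach is essentially the one in the paper: take $\tilde{c}=u_r$ (your $-\phi$), plug $\xi=\tilde{c}\eta$ into stability, and use the linearized equation for $u_r$ to produce the weighted Hardy inequality
\[
(n-1)\int_{B_1}\frac{u_r^{2}\eta^{2}}{|x|^{2}}\,dx \le \int_{B_1}u_r^{2}|\nabla\eta|^{2}\,dx .
\]
The paper's test function is $\eta=r\bigl(r^{-\alpha}-(1/2)^{-\alpha}\bigr)_+$, which is a reparametrization of your $\psi=r^{-a}-1$ (with $a=\alpha-1$) and an innocuous cutoff at $r=1/2$.

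Your ``main obstacle'' is not one. You do \emph{not} need a pointwise bound $\phi(r)\le Cr^{-\sigma_n}$, nor any Moser/De~Giorgi iteration. From the inequality above with $\psi\sim r^{-a}$, the singular parts on both sides have the same weight, and for $a^{2}<n-1$ the difference absorbs the singular integral and leaves
\[
\int_0^{1} u_r^{2}\,r^{\,n-3-2a}\,dr \le C
\]
(with $C$ uniform in $\lambda$, the tail being controlled by bounds on $u_\lambda$ away from the origin). Then a single Cauchy--Schwarz gives
\[
u_\lambda(0)=\int_0^{1}\phi\,dr
\le \left(\int_0^{1}\phi^{2}r^{\,n-3-2a}\,dr\right)^{1/2}
\left(\int_0^{1}r^{\,3-n+2a}\,dr\right)^{1/2},
\]
and the second factor is finite iff $a>(n-4)/2$. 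The two constraints $a<\sqrt{n-1}$ and $a>(n-4)/2$ are simultaneously satisfiable exactly when $(n-4)^{2}<4(n-1)$, i.e.\ $n\le 9$. So the sharp threshold drops out of elementary algebra on the exponents; no delicate saturation of the Hardy constant $(n-2)^{2}/4$ is required, and your worry about landing on a ``suboptimal threshold'' is unfounded.
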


As mentioned before, this theorem also holds for every $H_0^1 (B_1)$ stable weak solution of \eqref{pb}, for any $f \in C^1$.
Thus, in view of Proposition \ref{prop:dimensopt}, the dimension $n \le 9$ is optimal in this result.

We turn now to the nonradial case and we present the currently known results. First, in 2000 Nedev solved the case $n \le 3$.

\begin{theorem}[Nedev \cite{ND}]\label{thm3pre}
Let $f$ be convex and satisfy \eqref{hypf}, and $\Omega\subset\RR^{n}$ be a smooth bounded domain.
If $n\leq 3$, then $u^* \in L^\infty(\Omega)$.
\end{theorem}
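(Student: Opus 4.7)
The plan is to establish a uniform $L^p$ bound on $f(u_\la)$ as $\la \uparrow \la^*$ for some $p>n/2$; standard Calder\'on--Zygmund theory then upgrades this to a uniform $W^{2,p}$ bound on $u_\la$, whence a uniform $L^\infty$ bound via Sobolev embedding since $n\le 3$. Since the family $\{u_\la\}$ is monotone in $\la$ and converges a.e.\ to $u^*$, this transfers to $u^*\in L^\infty(\Om)$.

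First, I would derive the basic energy identity by multiplying the equation $-\De u_\la = \la f(u_\la)$ by the admissible test function $f(u_\la)-f(0)$ (which vanishes on $\pa\Om$ since $u_\la=0$ there) and integrating by parts:
$$
\int_\Om f'(u_\la)\,|\na u_\la|^2\,dx \;=\; \la\!\int_\Om f(u_\la)\bigl(f(u_\la)-f(0)\bigr)\,dx.
$$
Next I would plug into the stability inequality \eqref{stability} the test function $\xi = G(u_\la)$, where $G(t):=\int_0^t \sqrt{f'(s)}\,ds$; this $\xi$ vanishes on $\pa\Om$ and satisfies $|\na\xi|^2 = f'(u_\la)|\na u_\la|^2$. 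Combining with the energy identity yields
$$
\la\!\int_\Om f'(u_\la)\,G(u_\la)^2\,dx \;\le\; \la\!\int_\Om f(u_\la)\bigl(f(u_\la)-f(0)\bigr)\,dx.
$$
At this point I would invoke convexity of $f$ (equivalently, monotonicity of $f'$), which gives the pointwise bound $f(u)-f(0)=\int_0^u f'(s)\,ds\le \sqrt{f'(u)}\,G(u)$ and hence $f'(u)\,G(u)^2\ge (f(u)-f(0))^2$. This is the cleanest first-order information one can extract from stability plus convexity.

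The main obstacle is that this naive combination turns out to be algebraically tight and, by itself, gives a trivially true inequality. The way I would overcome this is to iterate the scheme with a one-parameter family of test functions: take $\xi=(f(u_\la)-f(0))^{1/2+\ve}$ in stability and, in parallel, multiply the equation by the primitive $h(u)=\int_0^u f'(s)^2\,ds$ (which vanishes on $\pa\Om$). Using convexity in the sharp form $\int_0^u f'(s)^2\,ds\le f'(u)(f(u)-f(0))$, one gains a factor of $f'(u)(f(u)-f(0))$ on the right-hand side and thereby an extra power of $f(u_\la)$. Balancing the two inequalities and absorbing the $f(0)$-terms into lower-order contributions via Young's inequality produces a bound of the form $\int_\Om f(u_\la)^{1+\de}\,dx\le C$ for some $\de>0$ independent of $\la$. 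With care this can be pushed to any exponent $p<2$, which when $n\le 3$ satisfies $p>n/2$; the rest of the argument is standard: Calder\'on--Zygmund gives $\nr u_\la\nr_{W^{2,p}}\le C$, Sobolev embedding gives $\nr u_\la\nr_{L^\infty}\le C$ uniformly in $\la<\la^*$, and passing to the monotone limit concludes $u^*\in L^\infty(\Om)$.
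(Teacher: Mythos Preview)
Your overall strategy---plug $\xi=h(u_\la)$ into the stability inequality \eqref{stability} and combine with the equation tested against a related primitive---matches the paper's one-sentence sketch, and you correctly identify that the first natural choice collapses to a tautology.

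The gap is in your proposed fix. The two test functions you name for the iteration do not combine: stability with $\xi=\tilde f(u)^{1/2+\ve}$ (writing $\tilde f:=f-f(0)$) produces the gradient term $(\tfrac12+\ve)^2\int_\Om \tilde f^{\,2\ve-1}f'^2|\nabla u|^2$, whereas testing the equation against $h(u)=\int_0^u f'^2$ yields $\int_\Om f'^2|\nabla u|^2$ with no weight $\tilde f^{\,2\ve-1}$. To match them one must instead take $h$ with $h'=\tilde f^{\,2\ve-1}f'^2$; then the convexity bound $h(t)\le f'(t)\tilde f(t)^{2\ve}/(2\ve)$ (the sharp form of your inequality $\int_0^u f'^2\le f'(u)\tilde f(u)$) leaves a leading coefficient $(\tfrac12+\ve)^2/(2\ve)\ge 1$ for \emph{every} $\ve>0$, with equality only at $\ve=\tfrac12$. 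So varying the exponent never breaks the tautology, and the claimed bound $\int f(u_\la)^{1+\de}\le C$ does not follow from this scheme. What Nedev actually does is keep the choice $\xi=\tilde f(u)$ and $h'=f'^2$ but go beyond the crude estimate $g\le f'\tilde f$ (here $g:=\int_0^{\cdot}f'^2$): the identity $f'(t)\tilde f(t)-g(t)=\int_0^t f''(s)\tilde f(s)\,ds$ turns the combined inequality into $\int_\Om \tilde f(u)\int_0^{u} f''\tilde f\;\le\; f(0)\int_\Om g(u)$, and then \emph{superlinearity} (which forces $f'\to\infty$) gives $g(t)=o\bigl(\tilde f(t)\int_0^t f''\tilde f\bigr)$, so the right-hand side can be absorbed. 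The missing idea is thus the second-order quantity $\int_0^t f''\tilde f$ together with superlinearity, not an exponent tweak; convexity alone, which is all your iteration invokes, cannot close the loop. Once this uniform bound is in hand, your Calder\'on--Zygmund and Sobolev endgame for $n\le 3$ is correct.
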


In 2010, Nedev's result was improved to dimension four:

\begin{theorem}[Cabr\'e \cite{Cabre}; Villegas \cite{V}]\label{thm3}
Let $f$ satisfy \eqref{hypf}, $\Omega\subset\RR^{n}$ be a smooth bounded domain, and $1 \le n\leq 4$.
If $n\in \{3,4\}$ assume either that $f$ is a convex nonlinearity or that $\Omega$ is a convex domain. 
Then, $u^* \in L^\infty(\Omega)$.
\end{theorem}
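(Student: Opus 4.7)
The plan is to establish a uniform $L^\infty$ bound on the minimal classical stable solutions $u_\la$, $\la < \la^*$, and then pass to $u^*$ by monotone convergence using Proposition \ref{prop:extremal}. Throughout I would use both the equation $-\De u_\la = \la f(u_\la)$ and the stability inequality \eqref{stability} for $u_\la$, choosing the test function $\xi$ adapted to each of the two hypotheses.

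In the case of convex $f$ (Villegas's strategy), the argument refines Nedev's proof of Theorem \ref{thm3pre}. I would take $\xi = g(u_\la)$ for a primitive $g$ with $g'(u)^2 = f'(u) f(u)^{2s}$; plugging into \eqref{stability}, the right-hand side becomes $\int_\Om f'(u_\la) f(u_\la)^{2s} |\na u_\la|^2 \, dx$. Writing $f'(u) |\na u|^2$ as a derivative and integrating by parts against the equation yields a bound on $\int_\Om f(u_\la)^{2s+1} \, dx$ in terms of lower-order quantities. Convexity of $f$ means $f'$ is nondecreasing, which allows one to gain an extra power by restricting the analysis to superlevel sets $\{u_\la > t\}$ compared with Nedev. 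Optimizing in $s$, this produces $f(u^*) \in L^p(\Om)$ for some $p > n/2$ whenever $n \le 4$, hence $u^* \in L^\infty(\Om)$ by Calder\'on-Zygmund applied to the equation and Sobolev embedding.

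In the case of convex $\Om$ (Cabr\'e's strategy), I would exploit the geometry of level sets. Taking $\xi = |\na u_\la| \, \eta$ in \eqref{stability} with a Lipschitz cut-off $\eta$, and using that each $\pa_i u_\la$ solves the linearized equation $-\De w = \la f'(u_\la) w$, a Bochner/Sternberg-Zumbrun computation yields
\begin{equation}\label{plan:SZ}
\int_{\{ |\na u_\la| > 0 \}} \bigl\{ c^2 + |\na_T \log |\na u_\la| |^2 \bigr\} \, |\na u_\la|^2 \, \eta^2 \, dx \le \int_\Om |\na u_\la|^2 |\na \eta|^2 \, dx,
\end{equation}
where $c^2$ now denotes the sum of squared principal curvatures of the level set of $u_\la$ through the point and $\na_T$ its tangential gradient. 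Convexity of $\Om$ enters via the moving-planes method of Gidas-Ni-Nirenberg, which forces $\pa_\nu u_\la < 0$ on $\pa \Om$ and hence $|\na u_\la|$ is bounded below in a neighborhood of $\pa \Om$, so that $\eta$ may be taken identically $1$ there without loss. I would then apply the Michael-Simon Sobolev inequality on each regular level set $\{u_\la = t\}$,
\[
\left( \int_{\{u_\la = t\}} v^{\frac{n-1}{n-2}} \, dH_{n-1} \right)^{\frac{n-2}{n-1}} \le C \int_{\{u_\la = t\}} \bigl( |\na_T v| + |\cH| v \bigr) \, dH_{n-1},
\]
with $v = |\na u_\la|$, and integrate in $t$ via the coarea formula. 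Since $|\cH| \le \sqrt{n-1}\, c$, the curvature term is absorbed by \eqref{plan:SZ}, producing a higher-integrability bound on $|\na u_\la|$; bootstrapping with the equation yields the desired uniform $L^\infty$ bound precisely when $n \le 4$.

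The hardest step I anticipate is the geometric argument in the convex-$\Om$ case. The Michael-Simon inequality must be applied on level sets that are smooth hypersurfaces only for almost every $t$ (by Sard's theorem), so an approximation argument is required to handle critical values of $u_\la$, and the interplay between the cut-off $\eta$, the coarea formula, and the boundary input from moving planes must be calibrated so that no term is lost when $\eta$ is sent to $1$. In the convex-$f$ case, the technical difficulty is instead the exponent calibration: $n=4$ is the sharp threshold for Villegas's trick, so the primitive $g$ and the integration-by-parts step must be pushed to the exact border of what the convexity hypothesis allows.
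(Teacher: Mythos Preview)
Your plan for the convex-$\Omega$ case matches the paper's route: test stability with $\xi=|\nabla u|\cdot(\text{cut-off})$, derive the Sternberg--Zumbrun curvature inequality (the paper's \eqref{semi1}, your \eqref{plan:SZ}), pass to level sets via coarea, and invoke the Allard--Michael--Simon inequality. Two technical points where the paper is more specific and where your outline would need adjustment to actually reach $n=4$. First, the paper takes the cut-off to be $\eta=\varphi(u)$ with $\varphi(0)=0$, and in dimension $4$ chooses $\varphi$ depending on the solution itself; this reduces everything to the one-dimensional weighted inequality \eqref{semi3} in the variable $s=u$. Second, and more importantly, the paper applies Theorem~\ref{Sobolev} with $p=2$ and $v=|\nabla u|^{1/2}$, not with $p=1$ and $v=|\nabla u|$ as you write. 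The quadratic exponent is what matches the $L^2$ structure of \eqref{semi1} and produces $\bigl(\int_{\Gamma_s}|\nabla u|^{(n-1)/(n-3)}\bigr)^{(n-3)/(n-1)}\le C\,h_1(s)$; comparing this with $h_2(s)=\int_{\Gamma_s}|\nabla u|^3$ is exactly what forces $(n-1)/(n-3)\ge 3$, i.e.\ $n\le 4$. Your $p=1$ version would need an extra Cauchy--Schwarz and does not close at the same threshold.

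On the convex-$f$ side, the paper records that Villegas's argument is not a standalone refinement of Nedev's test-function method but rather \emph{combines} the geometric estimate of \cite{Cabre} with \cite{ND}; convexity of $f$ is what substitutes for the boundary control that convexity of $\Omega$ provided. A purely Nedev-type choice $\xi=g(u_\la)$ as you describe stalls at $n\le 3$ (that is precisely Theorem~\ref{thm3pre}), so the level-set input is still needed in this branch too.
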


For $3\leq n \leq 4$, \cite{Cabre} requires $\Omega$ to be convex, while $f$ needs not be
convex. Some years later, S. Villegas~\cite{V} succeeded to use both \cite{Cabre} and \cite{ND} when $n=4$
to remove the requirement that $\Omega$ is convex by further assuming that $f$ is convex.

\begin{open problem}
For every $\Om$ and for every $f$ satisfying \eqref{hypf}, is the extremal solution $u^*$ -- or, in general, $H_0^1$ stable weak solutions of \eqref{pb} -- always bounded in dimensions $5,6,7,8,9$?
\end{open problem}

We recall that the answer to this question is affirmative when $\Om = B_1$, by Theorem \ref{corolrad}. We next sketch the proof of this radial result, as well as the regularity theorem in the nonradial case up to $n \le 4$.
In the case $n=4$, we will need the following remarkable result.

\begin{theorem}[Allard; Michael and Simon]
\label{Sobolev}
Let  $M\subset  \RR^{m+1}$ be an immersed smooth $m$-dimensional 
compact hypersurface without boundary.

Then, for every $p\in  [1, m)$,
there exists a constant $C = C(m,p)$ depending only on the dimension $m$ 
and the exponent $p$ such that, for every $C^\infty$ function $v : M  \to \RR$, 
\begin{equation}
\label{MSsob}
\left( \int_M |v|^{p^*} \,dV \right)^{1/p^*} \leq C(m,p)
\left( \int_ M (|\nabla v|^p +  | \cH v|^p) \,dV \right)^{1/p},
\end{equation}
where $\cH$ is the mean curvature of $M$ and $p^* = mp/(m - p)$.
\end{theorem}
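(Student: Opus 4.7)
My strategy is the classical two-step one: first establish the sharp-looking $p=1$ case, which contains all the geometric content, and then bootstrap to every $p\in[1,m)$ by substitution and H\"older. In all of what follows $\vec{\cH}$ denotes the mean-curvature vector of $M$, so that $|\vec{\cH}|=|\cH|$.

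First I would prove the $p=1$ inequality,
\[
\Bigl(\int_M v^{m/(m-1)}\,dV\Bigr)^{(m-1)/m}\;\le\; C_m \int_M\bigl(|\nabla v|+|\cH|\,v\bigr)\,dV,
\]
for every non-negative $v\in C^\infty(M)$. The essential tool is the first-variation identity
\[
\int_M \dv_M X\,dV \;=\; -\int_M X\cdot \vec{\cH}\,dV,
\]
valid for any smooth compactly supported vector field $X$ on $\RR^{m+1}$. Taking $X(x)=\phi(|x-x_0|)(x-x_0)$ with a non-increasing cutoff $\phi$ and a fixed $x_0\in\RR^{m+1}$, and then multiplying through by $v$ and integrating by parts along $M$, one obtains a monotonicity-type differential inequality for the weighted density
\[
\rho\longmapsto \rho^{-m}\!\!\int_{M\cap B_\rho(x_0)} v\,dV,
\]
with error controlled by integrals of $|\nabla v|$ and $|\cH|\,v$. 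Letting $\rho\to 0^+$ at a point $x_0\in M$ with $v(x_0)>0$ yields the pointwise density bound
\[
v(x_0)\;\le\;\frac{C_m}{\rho^{m}}\int_{M\cap B_\rho(x_0)} v\,dV \;+\; \frac{C_m}{\rho^{m-1}}\int_{M\cap B_\rho(x_0)}\bigl(|\nabla v|+|\cH|\,v\bigr)\,dV .
\]
A Vitali covering of the superlevel sets $\{v>t\}$, followed by integration in $t$ through the coarea formula, then converts this pointwise density bound into the desired $L^1\!\to\! L^{m/(m-1)}$ estimate. Equivalently, one proves the geometric isoperimetric inequality on $M$,
$|\Omega|_m^{(m-1)/m}\le C_m(|\partial\Omega|_{m-1}+\int_\Omega|\cH|\,dV)$ for each smooth $\Omega\subset M$, and applies it to $\Omega=\{v>t\}$.

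Second, to bootstrap to $p\in(1,m)$, I would set $\gamma = p(m-1)/(m-p)>1$, so that $\gamma m/(m-1)=p^{*}$ and $p(\gamma-1)/(p-1)=p^{*}$, and apply the $p=1$ inequality to $w=v^{\gamma}$. Since $|\nabla w|=\gamma v^{\gamma-1}|\nabla v|$ and $|\cH|\,w = v^{\gamma-1}(|\cH|\,v)$, H\"older with conjugate exponents $\bigl(p/(p-1),p\bigr)$ gives
\[
\int_M v^{\gamma-1}|\nabla v|\,dV \le \|v\|_{p^{*}}^{\gamma-1}\,\|\nabla v\|_{p},\qquad \int_M v^{\gamma-1}(|\cH|\,v)\,dV \le \|v\|_{p^{*}}^{\gamma-1}\,\|\cH v\|_{p}.
\]
The left-hand side of Step 1 applied to $w$ equals $\|v\|_{p^{*}}^{\gamma}$, since $p^{*}(m-1)/m=\gamma$. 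The identity $\gamma-(\gamma-1)=1$ therefore lets one absorb the factor $\|v\|_{p^{*}}^{\gamma-1}$ and conclude
\[
\|v\|_{p^{*}}\;\le\; C(m,p)\,\bigl(\|\nabla v\|_{p}+\|\cH v\|_{p}\bigr),
\]
from which the stated inequality follows (with a different constant) by the elementary equivalence $a+b\le 2^{(p-1)/p}(a^{p}+b^{p})^{1/p}$.

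The main obstacle lives entirely in Step 1. Writing down the monotonicity formula is a direct consequence of the first-variation identity, but extracting from it a pointwise density bound on $M$ and then running the Vitali/coarea covering argument intrinsically on the $m$-dimensional submanifold (rather than in the ambient $\RR^{m+1}$) is the geometrically delicate part, and is where Allard's varifold machinery is essential. By contrast, once the $p=1$ case is secured, Step 2 is pure bookkeeping with exponents.
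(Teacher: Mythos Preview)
The paper does not give its own proof of this theorem; it only remarks on the 1967 precursor of Miranda (the case $\cH=0$) and refers the reader to Dupaigne's book for a proof. So there is no in-paper argument to compare against.

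Your outline is the standard Michael--Simon route and is correct in its architecture: the $p=1$ case carries all the geometric content via the first-variation identity and a monotonicity/density estimate, and the passage from $p=1$ to general $p\in(1,m)$ by substituting $w=v^{\gamma}$ with $\gamma=p(m-1)/(m-p)$ and using H\"older is exactly right (your exponent checks $\gamma m/(m-1)=p^{*}$ and $(\gamma-1)p/(p-1)=p^{*}$ are correct). One small comment: you say Allard's varifold machinery is ``essential'' for Step~1, but Michael and Simon gave a direct proof for smooth submanifolds using only the first-variation formula and an iterative covering/absorption argument, without invoking varifolds; Allard's approach is more general but not strictly necessary in the smooth compact setting of the statement. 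Otherwise your plan is sound, and the honest admission that the density-to-isoperimetric step is the delicate part is accurate.
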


This theorem dates from 1972 and has its origin in an important result of Miranda from 1967. It stated that \eqref{MSsob} holds with $\cH=0$ if $M$ is a minimal surface in $\RR^{m+1}$.
See the book \cite{Dupaigne} for a proof of Theorem \ref{Sobolev}.

\begin{remark}
Note that this Sobolev inequality contains a term involving the mean curvature of $M$ on its right-hand side. This fact makes, in a remarkable way, that the constant $C(m,p)$ in the inequality does not depend on the geometry of the manifold~$M$.
\end{remark}

\begin{sketch}[of Theorems \ref{corolrad}, \ref{thm3pre}, and \ref{thm3}] 
For Theorem \ref{thm3pre} the test function to be used is $\xi=h(u)$, for some $h$ depending on $f$ (as in the proof of Theorem~\ref{thm:CranRab}).

Instead, for Theorems \ref{corolrad} and \ref{thm3}, the proofs start by writing 
the stability condition \eqref{stability} for the test 
function $\xi= \tilde{c} \eta$, where $\eta|_{\partial\Omega}\equiv0$.
This was motivated by the analogous computation that we have presented for minimal surfaces right after Remark \ref{rem:Jacobi operator}.
Integrating by parts, one easily deduces that
\begin{equation}\label{stabc}
\int _{\Omega} \left( \Delta \tilde{c} +f'(u) \tilde{c} \right) \tilde{c} \eta ^{2}\,dx \le
\int _{\Omega} \tilde{c}^{2}\left|\nabla \eta\right|^{2}\,dx.
\end{equation}
Next, a key point is to choose a function $\tilde{c}$ satisfying an appropriate equation for
the linearized operator $\Delta + f'(u)$. In the radial case (Theorem \ref{corolrad}) the choice of $\tilde{c}$ and the final choice of $\xi$ are
\begin{equation*}
\tilde{c} =u_{r} \quad\text{ and }\quad\xi=u_r r(r^{-\alpha}-(1/2)^{-\alpha})_{+} ,
\end{equation*}
where $r=|x|$, $\alpha>0$, and $\xi$ is later truncated near the origin to make it Lipschitz.
The proof in the radial case is quite simple after computing the equation satisfied by $u_r$.


For the estimate up to dimension 4 in the nonradial case (Theorem~\ref{thm3}), \cite{Cabre} takes
\begin{equation}\label{choice4}
\tilde{c} =\left|\nabla u\right| \quad\text{ and }\quad\xi=\left|\nabla u\right|\varphi(u) ,
\end{equation}
where, in dimension $n=4$, $\varphi$ is chosen depending on the solution $u$ itself.

We make the choice \eqref{choice4} and, in particular, we take
$\tilde{c} =\left|\nabla u\right|$ in \eqref{stabc}.
It is easy to check that, in the set 
$\left\{\left|\nabla u\right|>0\right\}$, we have
\begin{equation}\label{dipassaggio}
\left(\Delta + f'(u)\right)|\nabla u|=\frac{1}{|\nabla u|}
\left(\sum_{i,j}u_{ij}^2-\sum_i\left(\sum_{j}u_{ij}\frac{u_j}
{|\nabla u|}\right)^2\right).
\end{equation}
Taking an orthonormal basis in which the last vector is the normal $\nabla u/|\nabla u|$ to the level set of $u$ (through a given point $x \in \Om$), and the other vectors are the principal directions of the level set at $x$, one easily sees that \eqref{dipassaggio} can be written as
\begin{equation}\label{eq:grad}
\left( \Delta+f'(u)\right)\left|\nabla u\right|=
\frac{1}{\left|\nabla u\right|} \left(\left|\nabla_T\left|\nabla u\right|\right|^2+
\left|A\right|^2\left|\nabla u\right|^2\right)\quad \text{in}\ 
\Omega\cap\left\{\left|\nabla u\right|>0\right\},
\end{equation}
where $\left|A\right|^2=\left|A\left(x\right)\right|^2$ is the squared norm of 
the second fundamental form of the level set of $u$ passing through a 
given point $x\in\Omega\cap\left\{\left|\nabla u\right|>0\right\}$, i.e., the 
sum of the squares of the principal curvatures of the level set. In the notation of the first section on minimal surfaces, $|A|^2 = c^2$.
On the other hand, as in that section $\nabla_T = \de$ denotes the tangential gradient to the level set. 
Thus, \eqref{eq:grad} involves geometric information of the level sets 
of $u$.

Therefore, using the stability condition \eqref{stabc},
we conclude that
\begin{equation}\label{semi1}
\int_{\left\{\left|\nabla u\right|>0\right\}} 
\left( |\nabla_T |\nabla u||^2 +|A|^2|\nabla u|^2\right)\eta^2\,dx
\leq \int_\Omega |\nabla u|^2 |\nabla \eta|^2 \,dx .
\end{equation}

Let us define
$$
T:=\max_{\ol{\Omega} }u=\left\|u\right\|_{L^\infty (\Omega)}
\quad\text{ and }\quad \Gamma_s:=\left\{x\in\Omega:u(x)=s\right\}
$$
for $s\in(0,T)$.  

We now use \eqref{semi1} with  
$\eta=\varphi(u)$, 
where $\varphi$ is a Lipschitz function in $\left[0,T\right]$ 
with $\varphi(0)=0$.
The right hand side of \eqref{semi1} becomes
\begin{eqnarray*}
\int_{\Omega}\left|\nabla u\right|^2\left|\nabla\eta\right|^2dx&=&
\int_{\Omega}\left|\nabla u\right|^4\varphi'(u)^2dx\\
&=&\int_{0}^{T}\left(\int_{\Gamma_s}\left|\nabla u\right|^3\,dV_s
\right)\varphi'(s)^2\,ds,
\end{eqnarray*}
by the {\it coarea formula}. Thus, \eqref{semi1} can be written as
\begin{eqnarray*}
&&\hspace{-1cm}\int_{0}^{T}\left(\int_{\Gamma_s}\left|\nabla u\right|^3\,dV_s
\right)\varphi'(s)^2\,ds\\
&&\geq\int_{\left\{\left|\nabla 
u\right|>0\right\}}\left(\left|\nabla_T\left|
\nabla u\right|\right|^2+\left|A\right|^2\left|\nabla 
u\right|^2\right)\varphi(u)^2dx\\
&&=\int_{0}^{T}\left(\int_{\Gamma_s\cap\left\{\left|\nabla 
u\right|>0\right\}}\frac{1}{\left|\nabla u\right|}
\left(\left|\nabla_T\left|
\nabla u\right|\right|^2+\left|A\right|^2\left|\nabla 
u\right|^2\right)\,dV_s\right)\varphi(s)^2\,ds\\
&&=\int_{0}^{T}\left(\int_{\Gamma_s\cap\left\{\left|\nabla 
u\right|>0\right\}}
\left( 4\left|\nabla_T\left|
\nabla u\right|^{1/2}\right|^2+\left(\left|A\right|\left|\nabla 
u\right|^{1/2}\right)^{2}\right) \,dV_s\right)\varphi(s)^2\,ds .
\end{eqnarray*}
We conclude that
\begin{equation}
\int_0^T h_1(s) \varphi(s)^2 \,ds
\leq
\int_0^T h_2(s)  \varphi'(s)^2 \,ds,
\label{semi3}
\end{equation}
for all Lipschitz functions 
$\varphi:\left[0,T\right]\rightarrow\mathbb{R}$ with 
$\varphi(0)=0$, where 
\begin{equation*}
h_1(s):=\int_{\Gamma_s} 
\left( 4|\nabla_T |\nabla u|^{1/2}|^2 +\left( |A||\nabla u|^{1/2} \right)^2\right) \,dV_s\, ,\quad
h_2(s):=\int_{\Gamma_s} |\nabla u|^3\,dV_s 
\end{equation*}
for every regular value $s$ of $u$.
We recall that, by Sard's theorem, almost every $s\in(0,T)$ is a regular value of $u$.

Inequality \eqref{semi3}, with $h_1$ and $h_2$ as defined above, leads to a bound for $T$
(that is, to an $L^{\infty}$ estimate and hence to Theorem \ref{thm3}) after choosing an 
appropriate test function $\varphi$ in \eqref{semi3}. 
In dimensions 2 and 3 we can choose a simple function $\varphi$ in \eqref{semi3} and use well known geometric inequalities 
about the curvature of manifolds (note that $h_1$ involves the curvature 
of the level sets of $u$). Instead, in dimension 4 we need to use the geometric Sobolev inequality of Theorem \ref{Sobolev} on each level set of $u$.
Note that $\cH^2 \le (n-1) |A|^2$.
This gives the following lower bound for $h_1 (s)$:
$$
c(n) \left( \int_{\Ga_s} | \na u |^{\frac{n-1}{n-3} }  \right)^{\frac{n-3}{n-1}} \le h_1 (s).
$$
Comparing this with $h_2 (s)$, which appears in the right hand side of \eqref{semi3}, we only know how to derive an $L^{\infty}$-estimate for $u$ (i.e., a bound on $T= \max u$) when the exponent $(n-1) / (n-3)$ in the above inequality is larger than or equal to the exponent $3$ in $h_2 (s)$.
This requires $n \le 4$.
See \cite{Cabre} for details on how the proof is finished.
\qed
\end{sketch}


\section{Appendix: a calibration giving the optimal isoperimetric inequality}
\addcontentsline{toc}{section}{Appendix}

Our first proof of Theorem \ref{thm:SimCone} used a calibration. To understand better the concept and use of ``calibrations'', we present here another one. It leads to a proof of the isoperimetric problem.

The isoperimetric problems asks which sets in $\RR^n$ minimize perimeter for a given volume. Making the first variation of perimeter (as in Section \ref{sec:mincones}), but now with a volume constraint, one discovers that a minimizer $\Om$ should satisfy $\cH=c$ (with $c$ a constant), at least in a weak sense, where $\cH$ is the mean curvature of $\pa \Om$. Obviously, balls satisfy this equation -- they have constant mean curvature.
The isoperimetric inequality states that the unique minimizers are, indeed, balls. In other words, we have:
\begin{theorem}[The isoperimetric inequality]
We have
\begin{equation}
\label{eq:isoperimetric}
\frac{|\pa \Om|}{| \Om|^{\frac{n-1}{n}}} \ge \frac{|\pa B_1|}{| B_1|^{\frac{n-1}{n}}} 
\end{equation}
for every bounded smooth domain $\Om \subset \RR^n$.
In addition, if equality holds in \eqref{eq:isoperimetric}, then $\Om$ must be a ball.
\end{theorem}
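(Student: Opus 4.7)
My plan is to construct a ``calibration on a contact set'' in the spirit of the proof of Theorem \ref{thm:SimCone}, adapted to the fact that the Lagrange condition $\cH = \text{const}$ forces the candidate vector field to have nonzero constant divergence rather than vanishing one. The natural choice is $X = \na u$, where $u$ solves a Neumann problem whose compatibility data encode the isoperimetric ratio of $\Om$; this is Cabr\'e's ABP-type proof of the isoperimetric inequality.

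After rescaling, I may assume $|\pa \Om|/|\Om| = n$, so that \eqref{eq:isoperimetric} reduces to the single inequality $|\Om| \ge |B_1|$. I then solve
$$\De u = n \ \text{ in } \Om, \qquad \pa_\nu u = 1 \ \text{ on } \pa \Om,$$
which is compatible by the chosen normalization, and the divergence of the candidate calibration $X = \na u$ is then the constant $n$ (matching the $\cH =$ const condition for the ball). The first key step is to show that the lower contact set
$$\Gamma := \{x \in \Om : u(y) \ge u(x) + \na u(x) \cdot (y - x) \ \text{for all} \ y \in \ol{\Om}\}$$
satisfies $B_1 \subset \na u(\Gamma)$. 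Given $p \in \RR^n$ with $|p| < 1$, the minimum of $y \mapsto u(y) - p \cdot y$ on $\ol{\Om}$ cannot be attained on $\pa \Om$, since there the outward derivative is $1 - p \cdot \nu \ge 1 - |p| > 0$; hence it is attained at an interior point $y^* \in \Gamma$ with $\na u(y^*) = p$ and $D^2 u(y^*) \ge 0$.

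The second key step is the arithmetic--geometric mean inequality for the positive semidefinite matrix $D^2 u$ on $\Gamma$: since $\mathrm{tr}(D^2 u) = n$, one has $\det D^2 u \le 1$ on $\Gamma$. Combining this with the area formula applied to $\na u : \Gamma \to \RR^n$ yields
$$|B_1| \le |\na u(\Gamma)| \le \int_\Gamma \det D^2 u \, dx \le |\Gamma| \le |\Om|,$$
which is the desired inequality. For the equality case, tracing the chain backwards shows $\Gamma = \Om$ and equality in AM--GM, forcing $D^2 u \equiv I$ on $\Om$; therefore $u(x) = |x - x_0|^2/2 + \text{const}$, and the boundary condition $\pa_\nu u \equiv 1$ on $\pa \Om$ then forces $\pa \Om$ to be the unit sphere centered at $x_0$.

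The main obstacle I anticipate is that, unlike a genuine calibration in the sense of Definition \ref{def:calibrationprima}, the vector field $\na u$ does not satisfy $|\na u| \le 1$ everywhere on $\Om$; the unit bound holds only on the contact set $\Gamma$, and the role played by $|X| \le 1$ in the divergence-theorem argument is now played by the covering inclusion $B_1 \subset \na u(\Gamma)$ together with the area formula. A secondary technical point is that elliptic regularity for the Neumann problem only gives $u \in W^{2,p}$, not $C^2$, so the area formula and the pointwise Hessian bound must be justified through Alexandrov's almost-everywhere second differentiability of the semiconvex function $u$ on $\Gamma$.
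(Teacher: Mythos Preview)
Your proposal is correct and follows exactly the approach of the paper: you solve the same Neumann problem, establish the inclusion $B_1\subset\na u(\Gamma)$ via the sliding/first-contact argument that the paper describes as ``foliating $\RR^n\times\RR$ by parallel hyperplanes'', and conclude with the area formula plus AM--GM. In fact you have carried out in full the steps the paper leaves as an exercise, including the equality case; the only cosmetic difference is your preliminary normalization $|\pa\Om|/|\Om|=n$.
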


In 1996 the first author found the following proof of the isoperimetric problem. It uses a calibration (for more details see \cite{Caisoperimetric}).

\begin{sketch}[of the isoperimetric inequality]
The initial idea was to characterize the perimeter $|\pa \Om|$ as in \eqref{calideph}-\eqref{eq:dimcalibration1}, that is, as
$$
| \pa \Om | = \sup_{ \nr X \nr_{L^\infty} \le 1} \int_{ \pa \Om} X \cdot \nu \, dH_{n-1} .
$$
Taking $X$ to be a gradient, we have that
$$
| \pa \Om | = \int_{ \pa \Om} \na u \cdot \nu \, dH_{n-1} = \int_{\pa \Om} u_\nu \, dH_{n-1} ,
$$
for every function $u$ such that $u_\nu=1$ on $\pa \Om$.
Let us take $u$ to be the solution of
\begin{equation}\label{eqappisop}
\left\{
\begin{array}{rcll}
\De u &=& c & \quad\mbox{in } \Om \\
u_\nu &=& 1  & \quad\mbox{on } \pa\Om ,\\
\end{array}\right.
\end{equation}
where $c$ is a constant that, by the divergence theorem, is given by
$$
c= \frac{|\pa \Om|}{| \Om| }.
$$
It is known that there exists a unique solution $u$ to \eqref{eqappisop} (up to an additive constant).

Now let us see that $X=\na u$ (where $X$ was the notation that we used in the proof of Theorem \ref{thm:SimCone}) can play the role of a calibration.
In fact, in analogy with Definition~\ref{def:calibrationprima} we have:

\begin{enumerate}[]
\item(i-bis) \,$\dv \na u = \frac{|\pa \Om|}{| \Om| } \, \mbox{ in } \, \Om $;
\item(ii-bis)  \,$\na u \cdot \nu = 1 \, \mbox{ on } \, \pa \Om$;
\item(iii-bis)  \,$B_1 (0) \subset \na u (\Ga_u)$, where
$$\Ga_u = \left\{ x \in \Om : u(y) \ge u(x) + \na u(x) \cdot (y-x)  \, \mbox{ for every } \, y \in \ol{\Om}  \right\} $$
is the {\it lower contact set of $u$}, that is, the set of the points of $\Om$ at which the tangent plane to $u$ stays below $u$ in $\Om$. 
\end{enumerate}
The relations (i-bis) and (ii-bis) follow immediately from \eqref{eqappisop}. 
In the following exercise, we ask to establish (iii-bis) and finish the proof of \eqref{eq:isoperimetric}.

We point out that this proof also gives that $\Om$ must be a ball if equality holds in \eqref{eq:isoperimetric}.
\qed
\end{sketch}

\begin{exercise}
Establish (iii-bis) above. For this, use a foliation-contact argument (as in the alternative proof of Theorem \ref{thm:SimCone} and in the proof of Theorem \ref{alba}), foliating now $\RR^n \times \RR$ by parallel hyperplanes.

Next, finish the proof of \eqref{eq:isoperimetric}. For this, consider the measures of the two sets in (iii-bis), compute $| \na u (\Ga_u)|$ using the {\it area formula}, and control $\det D^2 u$ using the geometric-arithmetic means inequality.

\end{exercise}

%
%
%
%

%
\begin{acknowledgement}
The authors wish to thank Lorenzo Cavallina for producing the figures of this work.

The first author is member of the Barcelona Graduate School of Mathematics and is supported by MINECO grants MTM2014-52402-C3-1-P and MTM2017-84214-C2-1-P. He is also part of the Catalan research group 2017 SGR 1392.

The second author was partially supported by PhD funds of the Universit\`{a} di Firenze and he is a member of the Gruppo Nazionale Analisi Matematica Probabilit\'{a} e Applicazioni (GNAMPA) of the Istituto Nazionale di Alta Matematica (INdAM). This work was partially written while the second author was visiting the Departament de Matem\`{a}tiques of the Universitat Polit\`ecnica de Catalunya, that he wishes to thank for hospitality and support.
\end{acknowledgement}

\end{document}